\DeclareSymbolFont{greekletters}{OML}{cmr}{m}{it}
\DeclareMathSymbol{\varrho}{\mathalpha}{greekletters}{"25}
\numberwithin{equation}{section}
\newtheorem{theorem}{Theorem}[section]
\newtheorem{proposition}[theorem]{Proposition}
\newtheorem{lemma}[theorem]{Lemma}
\newtheorem{corollary}[theorem]{Corollary}
\newtheorem{claim}[theorem]{Claim}
\theoremstyle{definition}
\newtheorem{remark}[theorem]{Remark}
\newtheorem{definition}[theorem]{Definition}
\newcommand{\gggg}{\gamma}
\definecolor{aquam}{rgb}{0.5,1.0,1.0}
\definecolor{bbrown}{rgb}{0.75,0.38,0.15}
\definecolor{Cyan}{rgb}{0,0.6,0.6}
\definecolor{Darkblue}{rgb}{0,0,1}
\definecolor{Dodgerblue2}{rgb}{0,0.5,1}
\definecolor{Green}{rgb}{0,0.6,0.06}
\definecolor{Kahki}{rgb}{1,1,0.5}
\definecolor{Magenta}{rgb}{0.7,0,0.7}
\definecolor{bMagenta}{rgb}{1,.6,1}
\definecolor{Orange}{rgb}{0.8,0.3,0}
\definecolor{dOrchid}{rgb}{0.7,0.2,0.4}
\definecolor{Orchid}{rgb}{1,0.5,1}
\definecolor{Purple}{rgb}{0.65,0.07,0.85}
\definecolor{Royalblue}{rgb}{0.6,0.85,0.87}
\definecolor{Tan}{rgb}{0.54,0.42,0.23}
\definecolor{bTan}{rgb}{0.94,0.82,0.63}
\definecolor{zoltan}{rgb}{0,0.1,0.3}
\definecolor{Turquoise}{rgb}{0,0.85,0.87}
\definecolor{Yellow}{rgb}{1,1,0}
\definecolor{darkamber}{rgb}{0.4,0.19,0.28}
\definecolor{bYellow}{rgb}{1,1,0.6}
\definecolor{bRed}{rgb}{1,0.7,0.7}
\definecolor{boxcolb}{rgb}{0.87,0.77,0.75}
\definecolor{boxcol}{rgb}{0.6,0.85,0.87}
\definecolor{boxcolgreen}{rgb}{0.64,0.93,0.79}
\definecolor{boxcolaa}{rgb}{.75,.99,.70}
\definecolor{boxcolbb}{rgb}{0.39,0.50,0.56}
\definecolor{boxcolcc}{rgb}{1,0.81,0.65}
\definecolor{yy}{rgb}{0.43,0.21,.18}
\definecolor{gA}{gray}{0.5}
\definecolor{gB}{gray}{0.8}
\definecolor{gC}{gray}{0.9}
\begin{document}

\title{ Generic H\"older level sets on Fractals }
\author{Zolt\'an Buczolich$^*$}
\address{Department of Analysis, ELTE E\"otv\"os Lor\'and\\
University, P\'azm\'any P\'eter S\'et\'any 1/c, 1117 Budapest, Hungary}
\email{zoltan.buczolich@ttk.elte.hu}
\urladdr{http://buczo.web.elte.hu, ORCID Id: 0000-0001-5481-8797}

\author{Bal\'azs Maga$^\text{\textdagger}$}
\address{Department of Analysis, ELTE E\"otv\"os Lor\'and\\
University, P\'azm\'any P\'eter S\'et\'any 1/c, 1117 Budapest, Hungary}
\email{mbalazs0701@gmail.com}
\urladdr{  http://magab.web.elte.hu/}

\author{ G\'asp\'ar V\'ertesy$^\text{\textdaggerdbl}$}
 \address{Alfr\'ed R\'enyi Institute of Mathematics, Re\'altanoda street 13-15, 1053 Budapest, Hungary}
\email{vertesy.gaspar@gmail.com}
\thanks{\scriptsize $^*$
The project leading to this application has received funding from the European Research Council (ERC) under the European Union’s Horizon 2020 research and innovation programme (grant agreement No. 741420).
This author was also supported by the Hungarian National Research, Development and Innovation Office--NKFIH, Grant 124003 and  at the time of completion of this paper was holding a visiting researcher position 
at the Rényi Institute.
}

\thanks{\scriptsize $^\text{\textdagger}$ This author was supported by the \'UNKP-21-3 New National Excellence of the Hungarian Ministry of Human Capacities, and by the Hungarian National Research, Development and Innovation Office-NKFIH, Grant 124749.}
\thanks{\scriptsize
$^\text{\textdaggerdbl}$  This author was supported by the \'UNKP-20-3 New National Excellence Program of the Ministry for Innovation and Technology from the source of the National Research, Development and Innovation Fund, and by the Hungarian National Research, Development and Innovation Office–NKFIH, Grant 124749.
 \newline\indent {\it Mathematics Subject
Classification:} Primary :   28A78,  Secondary :  26B35, 28A80.
\newline\indent {\it Keywords:}   H\"older continuous function, level set, Sierpi\'nski triangle, fractal conductivity, ramification.}

\date{\today}

\maketitle

\begin{abstract}
Hausdorff dimensions of level sets of generic continuous functions defined on fractals 
were considered  in two papers by R. Balka, Z. Buczolich and M. Elekes. In those papers  the  topological Hausdorff dimension of fractals was defined.
In this paper we start to study level sets of generic $1$-H\"older-$\aaa$ functions defined on fractals.  This is related to some sort of "thickness", "conductivity" properties of fractals.
The main concept of our paper is $D_{*}(\aaa, F)$ which  is the  essential supremum of the Hausdorff dimensions of the level sets of  a generic
$1$-H\"older-$\aaa$ function defined on the fractal $F$.
We prove some basic properties of $D_{*}(\aaa, F)$, we calculate its value for an example of a "thick fractal sponge", we show that for connected self similar sets $D_{*}(\aaa, F)$  it equals the Hausdorff dimension of almost
every level in the range of a generic $1$-H\"older-$\aaa$ function.
\end{abstract}

\setcounter{tocdepth}{3}
\tableofcontents


\section{Introduction}
  
In \cite{BBEtoph} the concept of topological Hausdorff dimension was introduced
(the definition of the topological Hausdorff dimension and the definition of some 
other concepts used in this introduction can be found in Section \ref{*secprel}).
The starting point of the research leading to the definition of topological Hausdorff dimension was a purely theoretic question concerning 
the Hausdorff dimension of the level sets of generic continuous functions defined on fractals, apart from  \cite{BBEtoph} see also \cite{BBElevel}. 
(Some people prefer to use the term typical in the Baire category sense instead 
of generic.) 
The topological Hausdorff dimension is related to some sort of "conductivity" properties of some fractal "networks" and outside of Mathematics, papers in Physics are also dealing with this concept, see for example  
\cite{Balankintoph}, \cite{Balankinfracspace}, \cite{Balankintransport}, \cite{Balankintoph2}, and  \cite{Balankinfluid}. 
 It is a
natural question to ask what happens if the level regions of our functions are not "infinitely
compressible" and hence due to thickness of the level regions we cannot use for almost
every levels the parts of our fractal domains where they are the "thinnest". The simplest
way to impose a bound on compressibility is considering Hölder functions instead of
arbitrary continuous functions. 
Motivated by this,  in this paper we consider level sets of $1$-H\"older-$\aaa$ functions
 defined on fractals.
Introducing a bound on the Hölder-constant is a customary practice as it significantly tames
the function space in question by making it complete and separable, we will discuss this a little further in Section \ref{*secprel}, where we will also 
refer to papers  \cite{ABMP} \cite{merlo} and \cite{PreissTiser} as examples where similar procedure was used. 

In Section \ref{*secprel} we give and recall some definitions 
and theorems
and prove some preliminary results.
In this section among other things we define $D_{*}(\aaa, F)$ which  is the  essential supremum of the Hausdorff dimensions of the level sets of  a generic
$1$-H\"older-$\aaa$ function.  If $F$ is the disjoint union of two fractals
$F_{1}$ and $F_{2}$, with $D_{*}(\aaa, F_1)<D_{*}(\aaa, F_2)$ then it is easy to see that it is not necessarily true that for the generic $1$-H\"older-$\aaa$  function 
$D_{*}(\aaa, F)$ equals the Hausdorff dimension of almost every level set in the range of the function. However, in Section \ref{secd*aaa} we show that for connected self-similar sets such a result holds   if $0<\alpha <1 $.  The Lipschitz case,  that is  $\alpha=1$ needs a different approach and can be the subject of some further research.

In Section \ref{*secdens} we establish some density and approximation results we need
for proving results about generic functions.

Next in Section \ref{*secub} we prove Theorem \ref{thm:trivial_upper_bound}
according to which $D_{*}(\aaa, F)$  either equals zero, or it  is always less or equal than the upper box dimension of $F$ minus one.

In Section \ref{*secex} we give the details of the  calculation of $D_{*}(\aaa,F)$ for $F$ defined in Theorem 
\ref{*thex}. 
This is an example fractal $F\sse [0,1/2]^2$,
which is a big "sponge" of positive Lebesgue measure and its complement is a dense system of very thin "tubes".  
In a "rough heuristic language" if we put our fractal sponge into $[0,1/2]^2$
then almost every level set of a typical continuous function can "run" in the complement of $F$, hence its Hausdorff dimension is $0$.
However, using Hölder level sets one can see that
$D_{*}(\aaa,F)=1$ for any $\aaa\in (0,1]$, showing that it is criss-crossed by only very "narrow" tubes and these tubes are too thin to "contain" almost
every level set of a generic $1$-H\"older-$\aaa$  function. 
For this example the calculation is relatively easy.
One can construct several examples of similar type. 
See, for example the set constructed in Theorem 6.1 
of \cite{sierc}. 
For a more complicated example, like the Sierpi\'nski triangle  in \cite{sierc} we find lower and upper estimates for $D_{*}(\aaa,F)$.
Although the Sierpi\'nski triangle $\Delta$ is connected so one cannot use
dense system of "tubes" in its complement, like in our example given in Theorem \ref{*thex},
  but near vertices of triangles used at
different levels of its construction it is not "too thick"  and almost every level set
of generic continuous functions
 defined on $\Delta$ can be squeezed into these regions, so they intersect $\Delta$ in a set of zero Hausdorff dimension.
In case of generic $1$-H\"older-$\aaa$  functions defined on $\Delta$ 
one can see that the regions where $\Delta$ "can be crossed" by zero dimensional level sets are quite limited and level sets of generic $1$-H\"older-$\aaa$ functions do not fit 
into them. Theorem 3.2 of \cite{sierc} shows that for $\aaa>0$ we have
$D_{*}(\aaa,\DDD)>0$, but contrary to our simple example given in 
Theorem \ref{*thex} of this paper by Theorem 4.5 of \cite{sierc} we also have
$D_{*}(\aaa,\DDD)<1$, for $\aaa<1$.  The estimate given there is stronger in fact, 
but this weak bound already verifies our heuristic  that 
the Sierpi\'nski triangle is blocking the level sets of a generic $1$-H\"older-$\aaa$ function, but this blockage is not as thick as the one done by the "thick sponge" of
Theorem \ref{*thex}. 

The antonym of blocking the level sets, is letting them to pass through. 
This is where "conductivity"  comes into the picture. So parts of the fractal
where level sets can "go through" with an intersection of small Hausdorff dimension are "well conducting".
Trying to find these areas to get the lower estimate of $D_{*}(\aaa,\DDD)$
to prove Theorem 3.2 of \cite{sierc} we define a concept of conductivity of subtriangles  used at different levels of the definition of the Sierpi\'nski
triangle.

In Section \ref{*secdgd}  we prove Theorem \ref{thm:generic_existence},
that is we show that  
there is a dense $G_\delta$ subset $\cag$ of the 
$1$-H\"older-$\aaa$ functions such that for every $f\in\cag$   the
essential supremum of the Hausdorff dimensions of the level sets of 
$f$
equals $D_*(\alpha,F)$.
This shows that in the complicated looking definition of $D_*(\alpha,F)$, in \eqref{*defDcsaF} the supremum is a maximum and the infimum is a minimum which equals the maximum, that is
$D_{*}^{f}$ is  not depending on $f\in \cag$.

In Section \ref{*secmon} we verify
Theorem \ref{thm:generic_monotonicity}, that is we show that $D_{*}(\aaa, F)$ is monotone increasing
in $\aaa$ for any compact set $F$. 

 Finally, in Section \ref{secd*aaa} we 
prove Theorem \ref{*thessup}, stating
 that if $F$ is a  connected self similar set, and $0<\alpha<1$, then 
one can select a dense $G_{\ddd}$ set such that for any $f\in \cag$
for almost every $r\in f(F)$
the Hausdorff dimension of the level set $f^{-1}(r)$ equals
$D_*(\aaa,F)$.  

This paper contains our first results about generic H\"older level sets on fractals.
We have one more paper \cite{sierc} which is almost finished.  While in the first paper we
primarily focus on establishing the fundamental properties of the examined quantities and hence lay the 
foundations of this theory, the other paper is dedicated to actual calculations and estimations. 
 In that paper, as an example, we  present quantitative results about 
$D_{*}(\aaa, F)$ when $F$ is the Sierpinski triangle.
We also show that for  certain  "strongly" separated fractals  $D_{*}(\aaa, F)$ 
is zero for small values of $\aaa$.
 In particular,  $D_{*}(\aaa, F)=0$ for all self-similar fractals satisfying the strong
separation condition and  $0<\alpha<1$ .
 We note that strongly separated fractal sets are of zero topological dimension and it is
well-known that on such sets the generic continuous functions are one-to-one, so every non-empty level set is a
singleton, and hence it is of zero dimension.
We also  give an example of a  "strongly" separated fractal,
  $F$ such that 
$D_{*}(\aaa, F)=0$  only  for $\aaa<1/2$.  
This means at a heuristic level that for such fractals the level sets of 
generic 1-H\"older-$\aaa$ functions are as flexible/compressible as those
of a continuous function. 
 On the other hand,  for $\aaa>1/2$ we have $D_{*}(\aaa, F)=1$, that is after a critical value of $\aaa$ these level sets 
are not as flexible/compressible as those of a continuous function.
So for small values of $\aaa$ only the geometry of $F$ matters, while for
larger $\aaa$s the H\"older exponent also counts.
We call this phenomenon phase transition.
 As we have mentioned,  $D_{*}(\aaa, F)$ is monotone increasing
in $\aaa$ for any compact set $F$. 
It is a natural question whether this function is continuous.
For the fractal $F$ mentioned above there is not only a 
phase transition at $\aaa_{\phi}=\frac{1}{2}$, but also a jump discontinuity
of $D_{*}(\aaa, F)$. This set is a less thick "fractal sponge" than the one discussed in Theorem \ref{*thex} of this paper.


The study of generic continuous functions has a long history.
Topological description of the generic continuous function
defined on $[0,1]$ was given by Bruckner and Garg \cite{BG}.
 Dimensions of the graphs of generic continuous functions were 
 studied by Mauldin and Williams \cite{MW}, Humke and Petruska \cite{HP},
Hyde, Laschos, Olsen, Petrykiewicz, and Shaw \cite{HLOPS}. 
Kirchheim \cite{BK} showed $\dim_{H}f^{-1}(y)=d-1$ for 
the generic continuous function $f\colon [0,1]^d\to \mathbb{R}$
for all $y$ in the interior of the range of $f$.

In \cite{BBEtoph} the topological Hausdorff dimension,
$\dim_{tH} K$ of a compact metric space $K$ was introduced.
It was proved that topological Hausdorff dimension  describes the Hausdorff dimension of
the level sets of the generic continuous function on $K$, namely 
  $\sup\{
\dim_{H}f^{-1}(y) : y \in \R \} = \dim_{tH} K - 1$ for the generic
$f \in C(K)$. 
If $K$  is
self-similar then one can say more, for the generic $f\in C(K)$ we have $\dim_{H} f^{-1}(y)=\dim_{tH}K-1$
 for every $y$
in the interior of $f(K)$.

In \cite{MaZha}  topological Hausdorff dimension of fractal squares was considered.


\section{Preliminaries}\label{*secprel}

The distance of $x,y\in \R^{p}$ is denoted by $|x-y|$. 
If $A\sse \R^{p}$ then the diameter of $A$ is denoted by $|A|=\sup\{ |x-y|: x,y\in A \}.$
The open ball of radius $\varrho$ centered at $x$ is denoted by $B(x,\varrho)$.
For a set $E\sse \R^{p}$ its $\varrho$-neighborhood $\{ x: \inf\{ |x-y|: y\in E \}<\varrho \}$ is denoted  by $U_\varrho(E)$. 

Assume that $F\subseteq \mathbb{R}^p$ for some $p>0$. 
In what follows, $F$ will be some fractal set, usually we suppose that it is compact. 

We say that a function $f: F \to \mathbb{R}$ is $c$-H\"older-$\alpha$ for $c>0$ and $0<\alpha\leq 1$ if $|f(x)-f(y)|\leq c|x-y|^\alpha$. 
The space of such functions will be denoted by $C^{\aaa}_{c}(F)$, or if $F$ is fixed then by $C^{\aaa}_{c}$. 
The space of H\"older-$\aaa$ functions will be denoted by $C^{\aaa}$, that is $C^{\aaa}=\bigcup_{c}C^{\aaa}_{c}.$
We say that $f$ is $c^-$-H\"older-$\alpha$ if there exists $c'<c$ such that
$f$ is $c'$-H\"older-$\alpha$. The set of such functions is denoted by $C^{\aaa}_{c^{-}}$, that is $C^{\aaa}_{c^{-}}=\bigcup_{c'<c}C^{\aaa}_{c'}$.

In the space of H\"older-$\aaa$ functions often  the norm
$$ ||f||_{C^{0,\aaa}}=||f||_{\oo}+\sup_{x,y\in F,\ x\not=y}\frac{|f(x)-f(y)|}{|x-y|^{\aaa}}$$
is considered. 
This is a Banach space and one can consider typical properties in these spaces as well. 
However, these spaces are usually non-separable and often it is more convenient to consider  
H\"older functions as subsets of continuous functions equipped with the supremum norm $ ||f||_{\oo}=\sup_{x\in F}|f(x)|.$ 
To obtain a closed subset of $C^{\aaa}(F)$ we will consider $1$-H\"older-$\alpha$ functions, $C^{\aaa}_{1}(F)$ and use the metric coming from the supremum norm.
One could use $C^{\aaa}_{c}(F)$  with any fixed positive constant $c$ instead of $1$. The results would be the same. 
In Lipschitz and H\"older spaces it is not unusual to consider these spaces.
For example in \cite{PreissTiser} and \cite{merlo} the one Lipschitz cases, in our notation $C^{1}_{1}([0,1])$ and $C^{1}_{1}([0,1]^n)$  were used.
In Theorem 2.13 of \cite{ABMP} generic results in the spaces $C^{\aaa}_{1}([0,1])$, $0<\aaa<1$
were considered, even our notation is identical to the one used there.

For $\rrr>0$ and $f\in C(F)$ we denote by $B(f,\rrr)$ the open ball of radius $\rrr$
centered at $f$, the ball taken in the supremum norm. 
If $f\in C_{1}^{\aaa}(F)$ then
$B(f,\rrr)\cap C_{1}^{\aaa}(F)$ will denote the corresponding open ball in the subspace $C_{1}^{\aaa}(F).$

Since similarities are not changing the geometry of a fractal set to avoid some 
unnecessary technical difficulties we suppose that we work with fractal sets $F$
of diameter  not exceeding  one. 
This way
\begin{equation}\label{*caaaineq}
C^{\aaa}_{1}(F)\sse C^{\aaa'}_{1}(F) \text{  if $\aaa>\aaa'$.}
\end{equation}

Suppose $A\sse \R^{p}$. 
Given $\ddd>0$  we say that the sets $U_{j}$ form a $\ddd\text{-cover}$ of $A$ 
if $|U_{j}|<\ddd$ for all $j$ and $A\sse \bigcup_{j} U_{j}$.
 
The $s$-dimensional Hausdorff measure (see its definition for example in \cite{[Fa1]}) is denoted by $\cah^{s}$. 
Recall that the Hausdorff dimension of $A\sse \R^{p}$
is given by
 \begin{equation}\label{*defdimh}
  \dim_H A=\inf\{ s : \cah^{s}(A)=0\}=
\end{equation}
  $$\inf\{s:\ex \mathbf{C}_s>0,\ \ax\ddd>0,\  \ex \{ U_{j} \} \text{ a } \ddd\text{-cover of }A  \text{ s.t. } \sum_{j}|U_{j}|^{s}<\mathbf{C}_s \}.$$
  
 One can observe that in the above definition instead of arbitrary  $\ddd\text{-covers of } A$ one can use open $\ddd\text{-covers}$, that is we can assume that the sets $U_{j}$ are open. 

Since the topological Hausdorff dimension is a less known concept
here we quickly mention some definitions and results from \cite{BBEtoph}. 
First we recall the definition of the
(small inductive) topological dimension.

\begin{definition} Set $\dim _{t} \emptyset = -1$. 
The \emph{topological dimension}
of a non-empty metric space $X$ is defined by induction as
$$
\dim_{t} X=\inf\{d: X \textrm{ has a basis } \iU \textrm{ such that
} \dim_{t} \partial {U} \leq d-1 \textrm{ for every } U\in \iU \}.
$$
\end{definition}

The topological Hausdorff dimension is  defined analogously to the topological
dimension. 

 In the next definition we adopt the convention that
$\dim_{H}\emptyset = -1$.

\begin{definition}\label{deftoph}
Set $\dim _{tH} \emptyset=-1$. 
The \emph{topological Hausdorff
dimension} of a non-empty metric space $X$ is defined as
\[
\dim_{tH} X=\inf\{d:
 X \textrm{ has a basis } \iU \textrm{ such that } \dim_{H} \partial {U} \leq d-1 \textrm{ for every }
 U\in \iU \}.
\]
\end{definition}

Both notions of dimension can attain the value $\infty$ as well.

If $K$ is a compact metric space and $\dim_{t}K=0$ then the generic $f\in C(K)$ is
well-known to be one-to-one, so every non-empty level set is a
singleton. We do not know where this folklore fact was first proved but its simple proof can be found for example in 
\cite{BBElevel}.

Assume $\dim_{t}K>0$. 
The following results from \cite{BBEtoph} show the
connection between the topological Hausdorff dimension and the level
sets of the generic $f\in C(K)$.


\begin{theorem} \label{ft}
If $K$ is a compact metric space with $\dim_{t}K>0$ then for the
generic $f\in C(K)$
\begin{enumerate}[(i)]
\item $\dim_{H} f^{-1} (y)\leq \dim_{tH} K-1$ for every $y\in \R$,
\item for every $\varepsilon > 0$ there exists an interval $I_{f,\varepsilon}$
such that $\dim_{H} f^{-1} (y)\geq
\dim_{tH} K - 1 - \varepsilon$ for every $y\in I_{f,\varepsilon}$.
\end{enumerate}
\end{theorem}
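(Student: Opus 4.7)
Write $d=\dim_{tH}K>0$. Since $K$ is separable, its topological Hausdorff dimension is witnessed by a countable basis $\{U_n\}_{n\geq 1}$ of open sets with $\dim_H\partial U_n\leq d-1$ for every $n$, and both parts of the theorem will be proved by combining this basis with a Baire category argument in $(C(K),\|\cdot\|_\infty)$.

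For part (i), I would introduce a maximum-principle condition on the basis: let $\mathcal{G}_1\subseteq C(K)$ consist of those $f$ for which, for every $n$, both $\max_{\overline{U_n}}f$ and $\min_{\overline{U_n}}f$ are attained on $\partial U_n$. Each single condition is dense (any interior extremum can be flattened by an arbitrarily small sup-norm perturbation supported near it) and open up to a further small perturbation, so the countable intersection $\mathcal{G}_1$ is dense $G_\delta$. For $f\in\mathcal{G}_1$ and $y\in\R$, I would show that $f^{-1}(y)$ admits a cover by countably many subsets of the boundary union $\bigcup_n\partial U_n$ via an iterative refinement: at each point $x\in f^{-1}(y)$ and each scale $\eta>0$, choose a basis element $U_n\ni x$ of diameter $<\eta$; the maximum principle plus continuity forces the oscillation of $f$ on $\overline{U_n}$ to vanish with $\eta$, pushing $f^{-1}(y)\cap U_n$ arbitrarily close to $\partial U_n$; passing to a countable refinement yields $f^{-1}(y)\subseteq\bigcup_n\partial U_n$ up to a Hausdorff-null error. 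Countable stability of Hausdorff dimension then gives $\dim_H f^{-1}(y)\leq\sup_n\dim_H\partial U_n\leq d-1$.

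For part (ii), the plan is a perturbation lemma plus Baire category. I would prove that for every $f\in C(K)$ and all $\eta,\varepsilon>0$, there exists $\tilde f\in C(K)$ with $\|f-\tilde f\|_\infty<\eta$ and an open interval $I\subseteq\tilde f(K)$ of length $>\eta$ such that $\dim_H\tilde f^{-1}(y)\geq d-1-\varepsilon$ for every $y\in I$. The construction exploits the definition of $\dim_{tH}K=d$: if every small open set admitted a boundary of dimension $<d-1-\varepsilon$, then assembling such refinements would produce a basis witnessing $\dim_{tH}K\leq d-\varepsilon$, a contradiction; consequently some compact subset $L\subseteq K$ with $\dim_H L\geq d-\varepsilon/2$ resists slicing, and designing $\tilde f$ as a small perturbation of $f$ that sweeps monotonically across $L$ produces an interval $I$ of rich levels. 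Given this lemma, for each rational $\eta,\varepsilon>0$ let $\mathcal{A}_{\eta,\varepsilon}\subseteq C(K)$ be the set of $f$ admitting such an $I$ with $\dim_H f^{-1}(y)\geq d-1-\varepsilon$ for every $y\in I$; the lemma shows $\mathcal{A}_{\eta,\varepsilon}$ is dense, a quantitative stability check makes it open, and the countable intersection $\bigcap_{\varepsilon,\eta}\mathcal{A}_{\eta,\varepsilon}$ provides the required dense $G_\delta$.

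The main obstacle in (i) is that the maximum principle does not by itself give the set-inclusion $f^{-1}(y)\subseteq\bigcup_n\partial U_n$: when $\partial U_n$ is disconnected, a value $y\in[\min_{\partial U_n}f,\max_{\partial U_n}f]$ need not lie in $f(\partial U_n)$, so an interior level point of $U_n$ is not forced onto the boundary; the refinement/descent step is where this gap is closed and is the quantitative heart of the argument. In (ii) the main obstacle is producing a \emph{single} perturbation $\tilde f$ whose level sets are uniformly rich across an entire interval rather than at isolated levels, which is precisely what ties the construction back to the definition of $\dim_{tH}$ and forces the use of a subset $L$ of near-maximal Hausdorff dimension.
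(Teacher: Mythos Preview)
The paper does not prove this theorem; it is quoted from \cite{BBEtoph} in the preliminaries as background, so there is no in-paper argument to compare against. That said, both halves of your plan have genuine gaps.

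For (i), the maximum-principle route cannot work as written. Take $K=[0,1]$, so $d=\dim_{tH}K=1$ and the goal is $\dim_H f^{-1}(y)\le 0$. Any countable basis of intervals has $\bigcup_n\partial U_n$ countable, yet by \cite{BG} the generic $f\in C([0,1])$ has $f^{-1}(y)$ a nonempty perfect (hence uncountable) set for all but countably many $y$ in its range. Thus $f^{-1}(y)\subseteq\bigcup_n\partial U_n$ is simply false, and ``up to a Hausdorff-null error'' is circular: here that phrase can only mean ``up to a set of Hausdorff dimension $0$'', which is precisely the conclusion at stake. Your ``refinement/descent'' step, once the max-principle decoration is stripped away, reduces to the triviality that every $x\in K$ lies within $\eta$ of $\partial U_n$ for some basis element of diameter $<\eta$; this carries no Hausdorff-dimension information whatsoever. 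The obstacle you flag in your final paragraph is not a technical wrinkle to be smoothed by iteration---it is fatal to this line of argument.

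For (ii), several steps are only gestures. From the failure of every basis to have small boundaries you jump to ``some compact $L$ with $\dim_H L\ge d-\varepsilon/2$ that resists slicing''; but the definition yields a statement about \emph{separating} sets (boundaries of basic open sets), not about a single thick $L$, and no such implication holds. Even granting such an $L$, ``sweeping monotonically across $L$'' is undefined in an abstract compact metric space, and a high-dimensional $L$ can be swept by a function with zero-dimensional fibers (an embedded arc has $\dim_H=1$ yet is parametrized injectively by a real-valued function). Finally, the asserted openness of $\mathcal{A}_{\eta,\varepsilon}$ fails: lower bounds on $\dim_H f^{-1}(y)$ are not $C^0$-stable (compare $f\equiv 0$, whose zero fiber is all of $K$, with any nearby nonconstant $g$), so a genuinely new idea is needed to produce a dense $G_\delta$.
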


\begin{corollary} \label{sup}
If $K$ is a compact metric space with $\dim_t K > 0$ then
$\sup\{ \dim_{H}f^{-1}(y) : y \in \R \} = \dim_{tH} K - 1$ for the
generic $f \in C(K)$.
\end{corollary}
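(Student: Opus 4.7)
The plan is to derive this directly from Theorem \ref{ft}, whose two parts together give matching upper and lower bounds. Let $\cag_1\subseteq C(K)$ denote the dense $G_\ddd$ set on which conclusion (i) of Theorem \ref{ft} holds, and let $\cag_2\subseteq C(K)$ denote the dense $G_\ddd$ set on which conclusion (ii) holds. Since $C(K)$ with the supremum norm is a complete metric space, the intersection $\cag=\cag_1\cap\cag_2$ is again dense $G_\ddd$, so every $f\in\cag$ is generic and satisfies both (i) and (ii).

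First I would extract the upper bound. Fix $f\in\cag$. By (i), $\dim_H f^{-1}(y)\le \dim_{tH}K-1$ for every $y\in\R$. Taking the supremum over $y$ yields
\[
\sup\{\dim_H f^{-1}(y):y\in\R\}\le \dim_{tH}K-1.
\]

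Next I would establish the matching lower bound. Still with $f\in\cag$, apply (ii) with $\varepsilon=1/n$ for each positive integer $n$: there exists an interval $I_{f,1/n}$ (which is non-empty by the assertion) and therefore at least one value $y_n\in I_{f,1/n}$ for which
\[
\dim_H f^{-1}(y_n)\ge \dim_{tH}K-1-\tfrac{1}{n}.
\]
Taking the supremum over $y$ gives $\sup\{\dim_H f^{-1}(y):y\in\R\}\ge \dim_{tH}K-1-1/n$ for every $n$, hence $\sup\{\dim_H f^{-1}(y):y\in\R\}\ge \dim_{tH}K-1$. Combined with the upper bound, equality holds for every $f\in\cag$, which is the required generic statement.

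The only subtle point—and the one I would flag as the main obstacle if Theorem \ref{ft} were not already available—is that the lower bound in (ii) degrades as $\varepsilon\to 0$, so one cannot hope to produce a single level set of dimension exactly $\dim_{tH}K-1$; one must pass through the $\ddd$-soft formulation and take a countable intersection of generic sets, using the Baire category stability of $\cag_2$ across the sequence $\varepsilon_n=1/n$. Apart from this categorical bookkeeping, the corollary is immediate once Theorem \ref{ft} is granted.
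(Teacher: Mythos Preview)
Your derivation is correct. The paper does not actually prove Corollary \ref{sup}; both Theorem \ref{ft} and its corollary are merely quoted from \cite{BBEtoph} as background, so there is no in-paper proof to compare against. Your argument---upper bound from (i), lower bound by applying (ii) with $\varepsilon=1/n$ and letting $n\to\infty$---is exactly the intended deduction.

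One minor remark on your closing paragraph: the ``countable intersection of generic sets'' across $\varepsilon_n=1/n$ is not needed at this stage. Theorem \ref{ft}(ii) already asserts that a \emph{single} generic $f$ satisfies the lower bound for \emph{every} $\varepsilon>0$, so once $f\in\cag_2$ you may vary $\varepsilon$ freely without further Baire bookkeeping. That intersection is presumably hidden inside the proof of Theorem \ref{ft} in \cite{BBEtoph}, but it is not part of the passage from the theorem to the corollary.
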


There are many  equivalent  definitions of the box or Minkowski dimension.
We will use the following one:

\begin{definition}\label{*defboxd}
Given a non-empty set $F\sse \R^p$
let $a_N(F)$ denote the number of  closed  $2^{-N}$ grid hypercubes 
intersected by $F$. 
The lower and upper box dimensions of $F$
equal $\ds \ldimb F=\liminf_{N\to\oo}\frac{\log a_{N}(F)}{N\log 2}$,
$\ds \udimb F=\limsup_{N\to\oo}\frac{\log a_{N}(F)}{N\log 2}$.
If $\ldimb F=\udimb F$ then this common value is the box dimension of $F$, denoted by $\dimb F$.  
For an empty set $F$ we put $\ldimb F=\udimb F=\dimb F=0$. 
\end{definition}

The above definition makes sense for an arbitrary set of $F\sse \R^p$,
but in this paper we will mainly work with measurable sets.
 
We need approximations by smooth functions.
We will use the bump function\begin{equation} \eta(x) = 
	\begin{cases*}
      \exp\left(-\frac{1}{1-|x|^2}\right) & if $|x|<1$, \\
      0        & otherwise,
    \end{cases*}
\end{equation}
and the corresponding mollifier 
\begin{displaymath}
\eta_{r}(x)=c_{r}\eta\left(\frac{x}{r}\right),
\end{displaymath}
where $c_r$ is defined such that $\int_{\mathbb{R}^p}\eta_r(x)dx=1$.

We want to study the Hausdorff dimension of the level sets of arbitrary $1$-H\"older-$\alpha$  functions  and 
also of the generic $1$-H\"older-$\alpha$ functions.
 
To make it more precise, we introduce the following notation: let $D^f(r,F)=D^f(r)=\dim_H(f^{-1}(r))$ for any function $f: F\to \mathbb{R}$, that is $D^f(r)$ denotes the Hausdorff dimension of the function $f$ at level $r$.

We are interested in those values
for which the level-set is of large Hausdorff dimension for many level sets in the sense of Lebesgue measure. 
This motivates the following definition. 
\begin{displaymath}
D_{*}^f(F)=D_{*}^f=\sup\{d: \lambda\{r : D^f(r,F)\geq{d}\}>0\},
\end{displaymath}
where $\lambda$ denotes the one-dimensional Lebesgue measure.
Later we will assume that our fractal $F$ is compact, but the above definition makes sense for more general measurable sets as well.


 The definition of $D_*^f(F)$  depends on $f$. 
In case we want a definition depending only on the fractal $F$ we can first take 
\begin{displaymath}
\underline{D}_{*}(\alpha,F)=\inf\{D_{*}^f: f\colon F\to\mathbb{R} \text{ is  locally  non-constant and $1$-H\"older-}\alpha\},
\end{displaymath}
where the locally non-constant property is understood as $f$ is non-constant on $U\cap F$ where $U$ is  any  neighborhood of  any  accumulation point of $F$. 
As we are only concerned with nonnegative numbers, by convention the infimum of the empty set is $0$. 
The value $\underline{D}_{*}(\alpha,F)$ concerns those functions for which 
"most" level sets are smallest possible.

As mentioned earlier we are also interested in level sets of generic 
$1$-H\"older-$\alpha$ functions. 

We denote by $\mg_{1,\aaa}(F)$, or by simply $\mg_{1,\aaa}$ the system of dense $G_{\ddd}$
sets in $C_{1}^{\aaa}(F)$.

We put
\begin{equation}\label{*defDcsaF}
D_{*}(\aaa,F)=\sup_{\cag\in \mg_{1,\aaa}}\inf\{ D_{*}^{f}:f\in \cag \}.
\end{equation}
In Theorem \ref{thm:generic_existence} we will show that there is a $G_\delta$ subset $\cag$ of $C_1^\alpha(F)$ such that for every $f\in\cag$ we have $D_*^f(F) = D_*(\alpha,F)$.

As we remarked in the introduction the existence of the above $\cag$ shows that
in the above definition the supremum is maximum, taken at this $\cag\in \mg_{1,\aaa}$,
and for this special $\cag$ there is no need to take the infimum, since 
  $D_{*}^{f}$ takes this minimum for any $f\in\cag$, which at the same time equals the maximum value.
   Combined with Theorem \ref{*thessup}  for $0<\alpha<1$  in case of connected self-similar fractals  one can think of $D_{*}(\aaa,F)$ as
the Hausdorff dimension of almost every level set in the range of the generic 
$C_1^\alpha(F)$
function.

So far we have considered $0<\aaa\leq 1$. 
To include generic continuous functions 
in our notation we set  $D_{*}(0, F)=\max\{0,\dim_{tH} F-1\}$. 
By Theorem \ref{ft},
 if $f$ is the generic continuous function on $F$, then 
$$D_{*}(0, F)=D_{*}^f (F) .$$

 For brevity, often we will omit $F$ from our notation.


\section{Main Results}\label{*secmain}

First we give a trivial upper bound for $D_*(\alpha, F)$.
Observe that this upper bound does not depend on $\aaa$.

\begin{theorem} \label{thm:trivial_upper_bound}
For any  bounded  measurable set $F\subseteq \mathbb{R}^{p}$, we have 
$$
D_*(\alpha, F) \leq \max\{0,\overline{\dim}_B (F) - 1\}.
$$ 
\end{theorem}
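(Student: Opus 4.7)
The plan is to bound $D_*(\alpha, F)$ by exploiting density of smooth (hence Lipschitz) approximations in $C_1^\alpha(F)$ combined with a classical coarea-type inequality, and then upgrading the resulting density statement to residuality. First I dispose of the trivial case $\overline{\dim}_B F \leq 1$, where the right-hand side is $0$; here $\dim_H F \leq 1$ forces $F$ to be totally disconnected (at least strictly so when $\dim_H F < 1$), and the generic one-to-one property of continuous functions on such sets, transferred to $C_1^\alpha(F)$ through the approximation machinery of Section \ref{*secdens}, yields $D_*(\alpha, F) = 0$.

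For the main case $\overline{\dim}_B F > 1$, the central observation is that mollifications stay in $C_1^\alpha(F)$ while becoming Lipschitz. Given $f \in C_1^\alpha(F)$, extend to $\tilde{f} \in C_1^\alpha(\mathbb{R}^p)$ and set $f_\rho := \eta_\rho \ast \tilde{f}$; convolution with a probability density preserves the H\"older-$\alpha$-with-constant-$1$ condition, so $f_\rho \in C_1^\alpha(F)$, while $f_\rho$ is $C^\infty$ and hence Lipschitz on the bounded set $F$, with $f_\rho \to f$ uniformly as $\rho \to 0$. For the Lipschitz function $f_\rho$ the Eilenberg/coarea inequality
\begin{equation*}
\int_{\mathbb{R}} \mathcal{H}^{s-1}(f_\rho^{-1}(r) \cap F)\, dr \;\leq\; C\, \mathrm{Lip}(f_\rho)\, \mathcal{H}^{s}(F)
\end{equation*}
applied with $s \downarrow \dim_H F \leq \overline{\dim}_B F$ gives $\dim_H (f_\rho^{-1}(r) \cap F) \leq \overline{\dim}_B F - 1$ for almost every $r$, i.e.\ $D_*^{f_\rho} \leq \overline{\dim}_B F - 1$. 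Hence the set $\mathcal{L} := \{g \in C_1^\alpha(F) : D_*^g \leq \overline{\dim}_B F - 1\}$ is dense in $C_1^\alpha(F)$. As a parallel sanity check, a direct covering argument using $a_N(F) \leq 2^{N(\overline{\dim}_B F + \varepsilon)}$ and the fact that each dyadic $2^{-N}$-cube has image of length at most $(\sqrt{p})^\alpha 2^{-N\alpha}$ yields $\int_\mathbb{R}\mathcal{H}^s_{\sqrt{p}\,2^{-N}}(f^{-1}(r))\,dr \leq (\sqrt{p})^{s+\alpha} a_N 2^{-N(s+\alpha)}$, recovering by Fatou the weaker bound $D_*^f \leq \overline{\dim}_B F - \alpha$ for \emph{every} $f \in C_1^\alpha(F)$ (which already matches the claim when $\alpha=1$).

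The main obstacle is upgrading the density of $\mathcal{L}$ to residuality, ensuring $\mathcal{L}$ meets every dense $G_\delta$ set $\cag \in \mg_{1,\alpha}(F)$. I would write
\begin{equation*}
\mathcal{L}^c = \bigcup_{d,\epsilon \in \mathbb{Q}_{>0},\, d > \overline{\dim}_B F - 1} A_{d,\epsilon}, \qquad A_{d,\epsilon} := \{g \in C_1^\alpha(F) : \lambda\{r : \dim_H g^{-1}(r) \geq d\} \geq \epsilon\},
\end{equation*}
note that each $A_{d,\epsilon}$ has empty interior by the mollification argument above, and establish closedness of each $A_{d,\epsilon}$. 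This is the delicate point, and amounts to an upper semicontinuity property of $g \mapsto \lambda\{r : \dim_H g^{-1}(r) \geq d\}$ under uniform convergence; I expect to obtain it by combining lower semicontinuity of the Hausdorff pre-measures $\mathcal{H}^s_\delta$ with a careful analysis of how level sets $g^{-1}(r)$ transform under $\|\cdot\|_\infty$-perturbations, using that a uniform perturbation of size $\eta$ only thickens a fiber into a slab of width at most $2\eta$ in the range variable. Once $\mathcal{L}$ is residual, $\inf_{f \in \cag} D_*^f \leq \overline{\dim}_B F - 1$ for every dense $G_\delta$ $\cag$, and taking the supremum over $\cag$ gives the theorem.
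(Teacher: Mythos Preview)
Your density argument via mollification and the Eilenberg inequality is fine; it even yields the stronger bound $D_*^{f_\rho}\le\dim_H F-1$ for each mollified $f_\rho$. The trivial case, however, is mishandled: when $\overline{\dim}_B F=1$ the set $F$ may well be connected (take $F=[0,1]$), so the totally-disconnected shortcut does not apply. More importantly, the step from density to residuality has a genuine gap.

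You propose to show each $A_{d,\epsilon}=\{g:\lambda\{r:\dim_H g^{-1}(r)\ge d\}\ge\epsilon\}$ is closed, and sketch this via ``lower semicontinuity of $\mathcal H^s_\delta$'' together with the observation that a uniform $\eta$-perturbation only thickens a fiber into a range-slab of width $2\eta$. But both ingredients point in the wrong direction. Knowing $g_n^{-1}(r)\subset g^{-1}([r-\eta_n,r+\eta_n])$ lets you cover $g_n^{-1}(r)$ using covers of slabs for $g$, which controls $\dim_H g_n^{-1}(r)$ from above by information about $g$; it does not control $\dim_H g^{-1}(r)$ from above using the $g_n$. Likewise, good $\delta$-covers pass from a limit set to nearby sets, giving upper semicontinuity of $\mathcal H^s_\delta$ and hence \emph{lower} semicontinuity of $\dim_H$, the opposite of what closedness of $A_{d,\epsilon}$ requires. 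There is no general upper semicontinuity of Hausdorff dimension of level sets under uniform convergence, so this route cannot be completed as written.

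The paper circumvents exactly this difficulty by never working directly with $\dim_H f^{-1}(r)$. Instead it tracks the cube-count $a_m(f,r)$ (number of $2^{-m}$ grid cubes meeting $f^{-1}(r)$), a lower-box-dimension quantity that is genuinely stable under uniform perturbation: if $\|f-f_0\|_\infty<R$ then $a_m(f,r)\le a_m(f_0,r)$ for all $r$ outside a set $E_1(R)$ whose measure can be made arbitrarily small by shrinking $R$. Combined with the piecewise affine density (Lemma~\ref{piecewise_affine_approx}) and the slicing Lemma~\ref{lemma:slicing} (which controls $\overline{\dim}_B$ of hyperplane sections of $F$), this lets one show directly that each $H_N(\varepsilon,\delta)$ is nowhere dense, producing an explicit residual set on which $\underline{\dim}_B f^{-1}(r)\le\max\{0,\overline{\dim}_B F-1\}$ for a.e.\ $r$. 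If you want to salvage your approach, you should replace the Hausdorff-dimension sets $A_{d,\epsilon}$ by analogous sets defined through $a_m(\cdot,r)$; the monotonicity $a_m(f,r)\le a_m(f_0,r)$ is then exactly the missing stability.
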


Next we give an example for computation of $D_*(\alpha, F)$.

\begin{theorem}\label{*thex}
Set $G_k:=\bigcup_{j\in\Z} \left(j\cdot2^{-k^2},j\cdot 2^{-k^2}+2^{-k^3}\right)$ for every $k\in\N$, 
$$
F_0 := [0,1/2]\setminus\bigcup_{k=2}^\infty G_k
$$ 
and $F:=F_0\times F_0$.
For every $\alpha\in(0,1]$ we have $D_*(\alpha,F)=1$, and $D_*(0,F)=0$.
\end{theorem}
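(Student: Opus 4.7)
The proof splits into three pieces. First, to see that $F$ has positive planar Lebesgue measure, note that at level $k$ the set $G_k\cap[0,1/2]$ is a union of at most $2^{k^2-1}+1$ intervals of length $2^{-k^3}$, so $\sum_{k\ge 2}\lambda(G_k\cap[0,1/2])\le\sum_{k\ge 2}(2^{k^2-1}+1)\cdot 2^{-k^3}<1/2$. Hence $\lambda(F_0)>0$, the product $F=F_0\times F_0$ has positive planar Lebesgue measure, and $\udimb F=2$. Theorem \ref{thm:trivial_upper_bound} immediately yields $D_*(\alpha,F)\le 1$ for every $\alpha\in[0,1]$.

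For $\alpha=0$ it suffices to show $\dim_{tH}F=0$, since then $D_*(0,F)=\max\{0,\dim_{tH}F-1\}=0$. The complement $[0,1/2]\setminus F_0$ is open and dense in $[0,1/2]$, because the holes at level $k$ are $2^{-k^2}$-spaced. For any $p=(x_0,y_0)\in F$ and $\rho>0$ I therefore pick $a,b,c,d\in[0,1/2]\setminus F_0$ with $a<x_0<b$, $c<y_0<d$, and $\max(b-a,d-c)<\rho$. Every point of the geometric boundary of $R=(a,b)\times(c,d)$ has a coordinate in $F_0^c$, so $\partial R\cap F=\emptyset$, meaning $R\cap F$ is clopen in $F$. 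These sets form a basis of $F$ with empty relative boundaries, yielding $\dim_{tH}F\le 0$.

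For $\alpha\in(0,1]$ we need the lower bound $D_*(\alpha,F)\ge 1$. The pivotal observation is that the projection $\pi_1(x,y)=x$ lies in $C_1^\alpha(F)$: it is $1$-Lipschitz and $|p-q|\le 1$ on $F$ forces $|\pi_1(p)-\pi_1(q)|\le|p-q|\le|p-q|^\alpha$. For each $r\in F_0$ the level set $\pi_1^{-1}(r)=\{r\}\times F_0$ has $\mathcal H^1$-measure $\lambda(F_0)>0$, so $D^{\pi_1}(r)=1$; as $\lambda(F_0)>0$ this gives $D_*^{\pi_1}=1$. To promote this to a generic statement, I plan to show that for each $c<1$ the set $\{f\in C_1^\alpha(F):D_*^f\ge c\}$ contains a dense $G_\delta$, and then intersect over a sequence $c_n\uparrow 1$. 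Via the Hausdorff-content formulation set
\[
\mathcal W_{m,k}=\bigl\{f\in C_1^\alpha(F)\,:\,\lambda\bigl(\{r:\mathcal H^{c-1/m}_\infty(f^{-1}(r))>1/k\}\bigr)>1/k\bigr\};
\]
openness follows from upper semicontinuity of Hausdorff content under sup-norm convergence, and $D_*^f\ge c$ precisely when $f\in\bigcap_m\bigcup_k\mathcal W_{m,k}$. Density of $\bigcup_k\mathcal W_{m,k}$ would be established by perturbing any $g\in C_1^\alpha(F)$ to a function whose level sets, on a positive-measure set of levels, inherit the vertical-slab structure of $\pi_1^{-1}(r)=\{r\}\times F_0$, exploiting that $F_0$ has Lebesgue density $1$ at almost every one of its points.

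The principal obstacle is the density step. A naive convex combination $f_\delta=(g+\delta\pi_1)/(1+\delta)$ lies in $C_1^\alpha(F)$ and approaches $g$ in sup norm, but for small $\delta$ its level sets still essentially reflect those of $g$, so $D_*^{f_\delta}$ need not be close to $1$. A more refined modification, supported near a density point of $F_0\times F_0$ and exploiting the extreme thinness of the tubes (width $2^{-k^3}$ against period $2^{-k^2}$) to replace $g$ by a locally $\pi_1$-like function without exceeding the H\"older-$1$ bound, is needed; this is the quantitative incarnation of the ``thick sponge'' heuristic advertised in the introduction.
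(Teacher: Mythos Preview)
Your treatment of the upper bound $D_*(\alpha,F)\le 1$ and of the case $\alpha=0$ is correct and agrees with the paper (the paper simply says ``totally disconnected'' rather than writing out the clopen basis, but your argument is fine).

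The lower bound for $\alpha\in(0,1]$ is not proved. There are two problems. First, the openness of $\mathcal W_{m,k}$ does not follow from any semicontinuity of Hausdorff content: from $\|g-f\|<\varepsilon$ you only get $g^{-1}(r)\subseteq f^{-1}\bigl((r-\varepsilon,r+\varepsilon)\bigr)$, which bounds $\mathcal H^{s}_\infty(g^{-1}(r))$ from \emph{above}, whereas you need a lower bound. Level sets simply do not vary semicontinuously with $f$ in the direction you require, so $\mathcal W_{m,k}$ need not be open. Second, and more seriously, you yourself declare the density step unfinished (``a more refined modification \dots\ is needed''). Without it the argument is only a plan.

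The paper avoids both issues by building a dense \emph{open} set directly rather than a $G_\delta$. It takes a countable dense family $\{f_m\}\subseteq C^{\alpha}_{1^-}([0,1/2]^2)$ of piecewise affine functions (Lemma~\ref{piecewise_affine_approx}), perturbs so that $|\partial_y f_m|\ge p_m>0$ where the derivative exists, and then chooses $k$ so large that
\[
\sum_{l\ge k}2^{l^2}\bigl(2^{-l^3}\bigr)^{\alpha}\le \frac{p_m\,2^{-k^2}}{1000}.
\]
On a small square $Q_{j,j'}$ where $f_m$ is affine and meets $F$ in positive measure, this inequality guarantees that for every $f\in B(f_m,\delta_m)\cap C^\alpha_1(F)$ (extended $1$-H\"older-$\alpha$ to $[0,1/2]^2$) the total H\"older jump across all the removed tubes in the $y$-direction is negligible compared with the vertical oscillation of $f$ on $Q_{j,j'}$. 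Hence $\{(x,f(x,y)):(x,y)\in F\cap Q_{j,j'}\}$ has positive planar measure, and by Fubini (Lemma~\ref{lemma 1 d}) a positive-measure set of levels $r$ has $f^{-1}(r)$ projecting to a set of positive $\lambda$-measure, so $\dim_H f^{-1}(r)\ge 1$. The union $\bigcup_m B(f_m,\delta_m)$ is then a dense open subset of $C^\alpha_1(F)$ on which $D_*^f\ge 1$. Note that the relevant direction is the one with nonvanishing partial derivative of the approximating $f_m$; the projection $\pi_1$ plays no special role.
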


The closed set $F$ defined above almost "fills out" $[0,1/2]^2$. We  have selected $[0,1/2]^2$, since we wanted to have a set of diameter not exceeding $1$. 
It is looking like a "sponge" there is a dense system of narrow tubes in it and it is of zero topological dimension. If one considers the function $f_0(x,y)=y$ then its level sets are horizontal, running West-East. Taking a ``generic continuous function" $f\in C^{0}(F)$ close to
$f_{0}|_{F}$ almost all of its level sets  are empty. We can also interpret it in
the following way. Take a continuous extension of $f$ onto $[0,1/2]^2$, still denoted by $f$. Then  its level sets are still "running almost West-East" 
but they are "flexible and compressible enough" to stay in the complement of $F$.
This means that the topological Hausdorff dimension is not "sensing" the fact that
$F$ is a "large sponge". On the other hand,  the theorem tells us that the  level sets of generic H\"older-$\aaa$ functions cannot be squeezed into the thin tubes in the complement of
$F$, this is reflected by the fact that $D_*(\alpha,F)=1$ when $0<\aaa\leq 1.$
For this fractal it is easy to carry out the calculations. 

In case of connected fractals like the Sierpi\'nski triangle or the Sierpi\'nski carpet
there are no tubes/holes in the complement in the fractal, but there are parts
where it is thinner and there are parts where it is thicker.
Level sets of generic $1$-H\"older-$\aaa$ functions "try to run" at parts of the fractal
where "it is thin". They give more precise information about these properties of the fractal than topological Hausdorff dimension.
In paper \cite{sierc} we give estimates on $D_*(\alpha,F)$ for the Sierpi\'nski triangle.

Next theorem shows that in the complicated looking definition \eqref{*defDcsaF} for a suitable $G_{\delta}$ set one can skip taking $\inf$ and $\sup$.

\begin{theorem} \label{thm:generic_existence}
If   $0< \aaa\leq 1$  and $F\subset\R^p$ is compact, then there is a dense $G_\delta$ subset $\cag$ of $C_1^\alpha(F)$ such that for every $f\in\cag$ we have $D_*^f(F) = D_*(\alpha,F)$.
\end{theorem}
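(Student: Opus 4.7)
The plan is to exhibit $\cag$ as the intersection $\cag^{\geq} \cap \cag^{\leq}$ of two dense $G_\delta$ subsets of $C_1^\alpha(F)$, on which respectively $D_*^f \geq D_*(\alpha,F)$ and $D_*^f \leq D_*(\alpha,F)$; the intersection then forces the two-sided equality throughout.

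The construction of $\cag^{\geq}$ is essentially formal. Since $C_1^\alpha(F)$ is a closed subspace of $(C(F), \|\cdot\|_\oo)$, it is itself a Baire space. By the supremum definition \eqref{*defDcsaF}, for each $n \in \mathbb{N}$ one can pick a dense $G_\delta$ set $\cag_n \subseteq C_1^\alpha(F)$ with $D_*^f \geq D_*(\alpha,F) - 1/n$ for every $f \in \cag_n$; then $\cag^{\geq} := \bigcap_n \cag_n$ is a dense $G_\delta$ on which $D_*^f \geq D_*(\alpha, F)$.

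For $\cag^{\leq}$, for each rational $d > D_*(\alpha, F)$ and each $k \in \mathbb{N}$ I would consider
$$U_{d, k} := \left\{f \in C_1^\alpha(F) : \lambda\bigl(\{r : D^f(r) \geq d\}\bigr) < 1/k\right\}$$
and argue that $U_{d, k}$ contains a dense $G_\delta$ subset of $C_1^\alpha(F)$; then $\cag^{\leq}$ is the intersection of these countably many dense $G_\delta$ sets, and on it $D_*^f \leq D_*(\alpha, F)$. To extract a $G_\delta$ structure I would replace the Hausdorff dimension of $f^{-1}(r)$ by its discretizations via the box counts $a_N(f^{-1}(r))$ from Definition \ref{*defboxd}, together with the characterization \eqref{*defdimh} of Hausdorff dimension through finite covers. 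The key observation is that if $\|g - f\|_\oo < \eta$ then $g^{-1}(r) \subseteq f^{-1}([r - \eta, r + \eta])$, so that every cube hit by $g^{-1}(r)$ already lies near one hit by some level set of $f$ at a nearby height; this makes the finite-scale box counts upper-semicontinuous in $f$ and yields openness of the auxiliary sets whose suitable intersection produces a $G_\delta$ inside $U_{d,k}$.

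The main obstacle is proving density of these sets: given any $f_0 \in C_1^\alpha(F)$ and any $\varrho > 0$, one must produce $g \in B(f_0, \varrho) \cap C_1^\alpha(F)$ with $\lambda(\{r : D^g(r) \geq d\}) < 1/k$. Since $d > D_*(\alpha, F)$, the supremum in \eqref{*defDcsaF} rules out any dense $G_\delta$ on which $D_*^f \geq d$ holds globally, but converting this abstract obstruction into a concrete local approximation scheme around $f_0$ is precisely what the density and approximation machinery developed in Section \ref{*secdens} is designed for: one perturbs $f_0$ by a small Hölder modification (bump-function based, built from the mollifiers $\eta_r$ introduced in Section \ref{*secprel}) whose resulting level sets are, for almost every level, covered by a combinatorial scheme giving small box dimension. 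Once density and the $G_\delta$ structure of the sets inside each $U_{d,k}$ are secured, $\cag := \cag^{\geq} \cap \cag^{\leq}$ is the required dense $G_\delta$ set on which $D_*^f = D_*(\alpha, F)$.
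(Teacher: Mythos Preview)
Your construction of $\cag^{\geq}$ is correct and matches the paper's. The problem is entirely in the density step for $\cag^{\leq}$, which you correctly flag as the main obstacle but do not actually resolve.

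From $d>D_*(\alpha,F)$ you only get that the set $H:=\{f\in C_1^\alpha(F):D_*^f<d\}$ meets every dense $G_\delta$; equivalently, $H$ is not nowhere dense. This yields a single ball $B(f_1,\delta_1)$ in which $H$ is dense, and nothing more. It does \emph{not} give you, near an arbitrary $f_0$, a function $g$ with $\lambda(\{r:D^g(r)\geq d\})<1/k$. Your proposed mechanism---perturb $f_0$ by a mollifier/bump modification so that the level sets acquire small box dimension---cannot work in general: the tools of Section~\ref{*secdens} produce Lipschitz or piecewise affine approximants, whose level sets are pieces of hyperplanes intersected with $F$, and there is no reason for these to have Hausdorff dimension below $d$ (indeed they typically realise $\overline{\dim}_B F-1$, which may exceed $D_*(\alpha,F)$). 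Nothing in Section~\ref{*secdens} manufactures functions with prescribed small level-set dimension; such functions must come from $H$ itself.

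The paper closes this gap by a transplanting argument. One covers $F$ by finitely many small balls $B(a,\delta_2)$ with $\delta_2^\alpha$ small relative to $\delta_1$. Given any $g_0$ and any $a$, the $1$-H\"older-$\alpha$ bound forces the oscillation of both $g_0$ and $f_1$ on $B(a,\delta_2)\cap F$ to be tiny, so a vertical shift plus truncation carries $g_0|_{B(a,\delta_2)\cap F}$ into $B(f_1,\delta_1)$; there one picks a nearby $g_2\in H$ and shifts back. This produces, arbitrarily close to $g_0$, a function whose restriction to $B(a,\delta_2)\cap F$ is a translate of a function with $D_*\leq d$. Lemma~\ref{*lemdfb} (which is the precise $G_\delta$ machinery, based on Hausdorff covers rather than box counts) then upgrades this to a dense $G_\delta$ controlling $D_*^f(B(a,\delta_2)\cap F)$, and intersecting over the finitely many $a$ gives the global bound. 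Your sketch contains neither the ``somewhere dense $\Rightarrow$ dense in a ball'' step nor the H\"older-based transplant; without them the density claim is unsupported.
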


From \eqref{*caaaineq} it follows that $\underline{D}_{*}(\alpha,F)$ is monotone increasing in $\aaa$, that is $\underline{D}_{*}(\alpha,F)\leq \underline{D}_{*}(\alpha',F)$
if $\aaa\leq \aaa'$. 
Next we state  the same property for  $D_{*}(\alpha, F)$.

\begin{theorem} \label{thm:generic_monotonicity}
Suppose that $F\subset\R^p$ is compact. Then the function $D_*(\alpha,F)$ is monotone increasing in $\alpha$  on $ (0,1]$.
\end{theorem}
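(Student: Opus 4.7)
The plan is to reduce the monotonicity to a Baire-category statement in the smaller space $C_1^{\alpha'}(F)$, exploiting the containment $C_1^{\alpha'}(F)\subseteq C_1^\alpha(F)$ from \eqref{*caaaineq}. Fix $0<\alpha<\alpha'\le 1$. Both $C_1^\alpha(F)$ and $C_1^{\alpha'}(F)$ are complete metric spaces under the sup norm inherited from $C(F)$. Theorem~\ref{thm:generic_existence} applied at exponent $\alpha'$ provides a dense $G_\delta$ set $\mathcal{G}_{\alpha'}\subseteq C_1^{\alpha'}(F)$ on which $D_*^g$ equals the constant $D_*(\alpha',F)$. Consequently, proving $D_*(\alpha,F)\le D_*(\alpha',F)$ reduces to exhibiting some dense $G_\delta$ in $C_1^{\alpha'}(F)$ on which $D_*^g\ge D_*(\alpha,F)$; intersecting it with $\mathcal{G}_{\alpha'}$ then completes the argument.

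To establish this, I would fix a rational $d<D_*(\alpha,F)$ and aim to show that the set
\[
\mathcal{A}_d:=\{g\in C_1^{\alpha'}(F):D_*^g\ge d\}
\]
is residual in $C_1^{\alpha'}(F)$, taking the intersection over countably many such $d\uparrow D_*(\alpha,F)$. Writing $D_*^g\ge d$ as $\bigcap_n\{g:\lambda\{r:D^g(r)\ge d-1/n\}>0\}$ and coding each Hausdorff-dimension condition via the finite dyadic coverings of \eqref{*defdimh}, one realizes $\mathcal{A}_d$ as a $G_{\delta\sigma}$ combination of open conditions in $C_1^{\alpha'}(F)$. Reducing to residuality then boils down to verifying that each relevant open condition is dense in $C_1^{\alpha'}(F)$.

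The density step is the crux. Given any $g_0\in C_1^{\alpha'}(F)$ and $\epsilon>0$, one seeks $g\in C_1^{\alpha'}(F)$ within sup-norm $\epsilon$ of $g_0$ whose level sets realize dimension at least $d$ on a positive-measure set of levels. A natural construction contracts $g_0$ to $(1-\eta)g_0\in C^{\alpha'}_{1-\eta}(F)$, which leaves a Hölder-$\alpha'$ budget of size $\eta$, and then adds a perturbation $h\in C^{\alpha'}_\eta(F)$ of small sup norm carrying many high-dimensional level sets. After rescaling by $1/\eta$, such $h$ correspond to elements of $C_1^{\alpha'}(F)$ with identical level-set structure, so the density question amounts to exhibiting, in any sup-norm ball of $C_1^{\alpha'}(F)$, functions whose level sets have positive-measure many levels of dimension $\ge d$.

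The main obstacle is constructing this perturbation $h$. A direct rescaling of a $C_1^\alpha(F)$ function with large $D_*^f$ (supplied by Theorem~\ref{thm:generic_existence} at exponent $\alpha$) does not work: the inequality $|x-y|^\alpha\ge|x-y|^{\alpha'}$ on $|x-y|\le 1$ runs in the wrong direction, so neither localizing to a small subset of $F$ nor contracting the values converts a $1$-Hölder-$\alpha$ function into an $\eta$-Hölder-$\alpha'$ function of bounded constant on small sets. The successful construction of $h$ must instead leverage the density and approximation machinery developed in Section~\ref{*secdens}, building Hölder-$\alpha'$ perturbations with prescribed rich level-set structure directly on $F$ by exploiting its fractal geometry. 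Coordinating the scales, ensuring the total Hölder-$\alpha'$ budget is respected in $g=(1-\eta)g_0+h$, and verifying that a positive-measure interval of high-dimensional level sets of the perturbation survives the addition to $(1-\eta)g_0$, will form the technical heart of the argument.
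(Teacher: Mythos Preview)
Your proposal has a genuine gap, and in fact the overall strategy points in the wrong direction.

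You work in the smaller space $C_1^{\alpha'}(F)$ and try to show that the generic $g$ there satisfies $D_*^g\ge D_*(\alpha,F)$. The only source of functions with large $D_*^f$ at your disposal is the dense $G_\delta$ set in $C_1^\alpha(F)$ provided by Theorem~\ref{thm:generic_existence}. Since $C_1^{\alpha'}(F)$ is in general \emph{not} dense in $C_1^\alpha(F)$, that $G_\delta$ set gives you no information about $C_1^{\alpha'}(F)$. Your additive scheme $g=(1-\eta)g_0+h$ does not help either: the level sets of a sum are not controlled by those of the summands, so even a perturbation $h$ with rich level-set structure would not transfer that structure to $g$. You acknowledge that constructing $h$ is the ``main obstacle'' and defer to Section~\ref{*secdens}, but that section only provides Lipschitz and piecewise-affine approximations; it says nothing about producing functions with prescribed level-set dimensions. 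Finally, the Baire-category bookkeeping is off: a $G_{\delta\sigma}$ set built from dense open pieces need not be residual, so even the reduction to ``density of open conditions'' is not valid as stated.

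The paper's proof reverses the direction and works in the \emph{larger} space $C_1^\alpha(F)$. The key observation is that functions in $C^{\alpha'}(F)\cap C^{\alpha}_{1^-}(F)$ are dense in $C_1^\alpha(F)$. Each such $f_{k,1}$ is $M_{k,1}$-H\"older-$\alpha'$ for some $M_{k,1}$, so $M_{k,1}^{-1}f_{k,1}\in C_1^{\alpha'}(F)$; perturb it slightly inside $C_1^{\alpha'}(F)$ to $f_{k,2}$ lying in the dense $G_\delta$ set where $D_*^{f_{k,2}}=D_*(\alpha',F)$, then rescale back to $f_{k,3}=M_{k,1}f_{k,2}$. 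Scalar rescaling preserves $D_*^f$, and a short two-scale estimate (small versus large $|x-y|$) shows that for a small enough perturbation $f_{k,3}$ remains in $C^{\alpha}_{1^-}(F)$ and is close to $f_{k,1}$. One obtains a sequence $(f_{k,3})$ dense in $C_1^\alpha(F)$ with $D_*^{f_{k,3}}=D_*(\alpha',F)$, and Lemma~\ref{*lemdfb} then yields a dense $G_\delta$ set $\mathcal{G}\subset C_1^\alpha(F)$ with $\sup_{f\in\mathcal{G}}D_*^f\le D_*(\alpha',F)$, whence $D_*(\alpha,F)\le D_*(\alpha',F)$. The point you are missing is that information flows naturally from $C_1^{\alpha'}(F)$ into $C_1^\alpha(F)$ via scalar rescaling of H\"older-$\alpha'$ functions, not the other way around.
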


Assume that $F$ is a self-similar set  determined by the contractive similarities $\fff_1,...,\fff_m$, $m\geq 2$ with ratios $0<\mathbf{q}_1, ..., \mathbf{q}_m<1$,
that is, $\ds F=\bigcup_{i}\fff_{i}(F)$. 
We put $\mathbf{q}_{\min}=\min\{ \mathbf{q}_1, ..., \mathbf{q}_m \}$.
We do not assume the Open Set Condition.

\begin{theorem}\label{*thessup}
Suppose that $F$ is a connected self-similar set and $0<\alpha<1$. 
Then there exists a dense $G_{\ddd}$ set $\cag$ in $C_{1}^{\aaa}(F)$ such that for any $f\in \cag$ 
$$D_*(\aaa,F)=D_{*}^f(F)=D^f(r,F)\text{ for a.e. }r\in f(F).$$
\end{theorem}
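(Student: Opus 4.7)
The plan is to combine Theorem \ref{thm:generic_existence} with the self-similar structure of $F$, exploiting the slack that $\alpha<1$ provides. Set $d_0:=D_*(\alpha,F)$ and start with the dense $G_\delta$ set $\mathcal{G}_0$ from Theorem \ref{thm:generic_existence}, on which $D_*^f=d_0$ for every $f$. By the definition of $D_*^f$ as an essential supremum, we automatically have $D^f(r)\leq d_0$ for a.e.\ $r\in f(F)$, so the task reduces to producing a smaller dense $G_\delta$ on which additionally $D^f(r)\geq d_0$ for a.e.\ $r$.

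For any word $w=i_1\cdots i_n$ write $\phi_w=\fff_{i_1}\circ\cdots\circ\fff_{i_n}$, with contraction ratio $q_w=\mathbf{q}_{i_1}\cdots\mathbf{q}_{i_n}\leq\mathbf{q}_{\min}^n$. Since $F$ is connected, $I_w:=f(\phi_w(F))$ is a subinterval of $f(F)$, and at each level $n$ the pieces $\{I_w:|w|=n\}$ cover $f(F)$ with $|I_w|\leq q_w^{\alpha}|F|^{\alpha}\leq\mathbf{q}_{\min}^{n\alpha}\to 0$. The pullback $g_w:=f\circ\phi_w$ lies in the strictly smaller class $C^{\alpha}_{q_w^\alpha}(F)\subsetneq C^{\alpha}_1(F)$, and any $g\in C^{\alpha}_{q_w^\alpha}(F)$ arises this way: the function $g\circ\phi_w^{-1}$ is $1$-H\"older-$\alpha$ on $\phi_w(F)$ and extends, via a McShane--Whitney construction, to a $1$-H\"older-$\alpha$ function on all of $F$. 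Combined with the continuity of the pullback map $T_w:f\mapsto f\circ\phi_w$, this surjectivity-with-bounded-lift property ensures that $T_w^{-1}$ sends dense $G_\delta$ subsets of $C^{\alpha}_{q_w^\alpha}(F)$ to dense $G_\delta$ subsets of $C^{\alpha}_1(F)$. Since $C^{\alpha}_{q_w^\alpha}(F)$ can be identified with $C^{\alpha}_1(\phi_w(F))$ via the natural rescaling, which preserves both genericity and Hausdorff dimensions of level sets, Theorem \ref{thm:generic_existence} applied inside $\phi_w(F)$ and pulled back through $T_w$ yields, for each word $w$ and each rational $\epsilon>0$, a dense $G_\delta$ set $\mathcal{G}_{w,\epsilon}\subseteq C^{\alpha}_1(F)$ of $f$ satisfying the quantitative estimate
\[
\lambda\bigl(\{r\in I_w:\dim_H(f^{-1}(r)\cap\phi_w(F))\geq d_0-\epsilon\}\bigr)\geq c_\epsilon\,|I_w|,
\]
with a constant $c_\epsilon>0$ that is \emph{independent of $w$}. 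The uniformity of $c_\epsilon$ stems from self-similar scaling: the map $(x,r)\mapsto(\phi_w(x),q_w^\alpha r)$ transports the problem on $F$ to the problem on $\phi_w(F)$, preserving the ratio of good-level measure to total range measure.

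Setting $\mathcal{G}:=\mathcal{G}_0\cap\bigcap_{w,\,\epsilon\in\mathbb{Q}^+}\mathcal{G}_{w,\epsilon}$ gives a dense $G_\delta$ subset of $C^{\alpha}_1(F)$. Fix $f\in\mathcal{G}$ and $\epsilon>0$, and let $B_\epsilon:=\{r\in f(F):D^f(r)<d_0-\epsilon\}$. If $\lambda(B_\epsilon)>0$, pick a Lebesgue density point $r^*\in B_\epsilon$, and for each $n$ choose $w_n$ with $|w_n|=n$ and $r^*\in I_{w_n}$. Then $|I_{w_n}|\to 0$ forces $\lambda(I_{w_n}\cap B_\epsilon)/|I_{w_n}|\to 1$, while the defining property of $\mathcal{G}_{w_n,\epsilon/2}$ yields $\lambda(I_{w_n}\setminus B_\epsilon)/|I_{w_n}|\geq c_{\epsilon/2}>0$, a contradiction. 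Hence $\lambda(B_\epsilon)=0$ for every $\epsilon>0$, which together with the a.e.\ upper bound gives $D^f(r)=d_0$ for a.e.\ $r\in f(F)$, as desired.

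The main obstacle is producing the sets $\mathcal{G}_{w,\epsilon}$ with a density constant $c_\epsilon$ that is \emph{uniform in $w$}; mere positivity of the good-level measure on each $I_w$ would not suffice for the concluding Lebesgue density argument. Establishing this uniformity is precisely the step that requires both the self-similarity of $F$ and the slack $q_w^\alpha<1$, in combination with the density/approximation results of Section \ref{*secdens}. It is also where the Lipschitz case $\alpha=1$ demands a different treatment, since the pullback then loses the quantitative slack needed for the extension and uniformity arguments.
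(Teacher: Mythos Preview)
Your overall strategy---reduce to an a.e.\ lower bound and prove that by combining a uniform ``good-level proportion'' on each self-similar piece with Lebesgue density---is exactly the architecture the paper uses. The genuine gap is the step you yourself flag as the main obstacle: obtaining the constant $c_\epsilon>0$ that is uniform in $w$. You write that ``Theorem \ref{thm:generic_existence} applied inside $\phi_w(F)$ and pulled back through $T_w$ yields \ldots\ the quantitative estimate $\ge c_\epsilon|I_w|$'', but Theorem \ref{thm:generic_existence} gives only $D_*^g=d_0$, i.e.\ that the good-level set has \emph{positive} measure; it gives no lower bound on the ratio $\kappa(g,\epsilon)$. Your self-similar scaling observation is correct in that it transports such a bound from $F$ to each $\phi_w(F)$, but it cannot create a positive $c_\epsilon$ out of nothing: you still need to show that there exists a dense $G_\delta$ set in $C_1^\alpha(F)$ on which $\kappa(g,\epsilon)\ge c_\epsilon$ for some fixed $c_\epsilon>0$, and nothing in your argument establishes this. (Note also that the pullback/extension step you describe works equally well for $\alpha=1$, so it cannot be the place where the hypothesis $\alpha<1$ enters.)

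The paper does \emph{not} try to produce such a dense $G_\delta$ set directly. Instead it first proves, by a Baire-type contrapositive (Lemma \ref{*clggd} and Proposition \ref{*ko}), that for every $\delta_0>0$ there is a single ball $B(f_0^*,\rho_0)\subset C_1^\alpha(F)$ and a $\kappa_0>0$ with $\kappa(f,\delta_0)\ge\kappa_0$ for all $f$ in that ball. It then builds explicit dense approximants $f_{n,k}^*$ by planting, at many points $\mathbf{x}(t)\in F$, similarity-rescaled copies of $f_0^*$ on small cylinders $\Phi_t(F)$; the slack $\alpha<1$ is used precisely here (Claims \ref{*cl1} and \ref{*cl2}) to keep the resulting function $1^-$-H\"older-$\alpha$ and uniformly close to $f_n$. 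For $f$ in a small ball around $f_{n,k}^*$ each planted copy contributes a good-level proportion $\ge\tfrac{\kappa_0}{24}\mathbf{q}_{\min}^\alpha\ell_0$ in an interval $\mathbf{I}_{n,k}(t)$ of the range, and Lebesgue density finishes the argument. Your pullback idea is in spirit the ``dual'' of this planting construction, but as written it does not supply the missing quantitative input.
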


This shows that in case of connected self-similar sets, like the Sierpi\'nski triangle or the Sierpi\'nski carpet one can think of $D_*(\aaa,F)$
as the Hausdorff dimension of almost every level set in the range of a generic continuous function.


\section{Some approximation and density results }\label{*secdens}

We recall an extension theorem which is a consequence of  Theorem 1 of \cite{[GrunbHolderext]}.

\begin{theorem}\label{*Grunb}
Suppose that $F\sse \R^{p}$ and $f:F\to \R$ is a $c$-H\"older-$\alpha$ function.
Then there exists a $c$-H\"older-$\alpha$ function $g:\R^{p}\to \R$
 such that
 $g(x)=f(x)$ for $x\in F$.
\end{theorem}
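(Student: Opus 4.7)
The plan is to reproduce the classical McShane-type extension, adapted to the H\"older setting via the ``snowflake metric'' $d_\alpha(x,y):=|x-y|^\alpha$. The whole point is that, for $0<\alpha\le 1$, the elementary inequality $(a+b)^\alpha\le a^\alpha+b^\alpha$ valid for $a,b\ge 0$ turns $d_\alpha$ into a genuine metric on $\mathbb{R}^p$. In this metric, $f$ being $c$-H\"older-$\alpha$ on $F$ (in the Euclidean metric) is the same as $f$ being $c$-Lipschitz on $(F,d_\alpha)$, so we are reduced to the well-known Lipschitz extension argument.

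Concretely, I would define
$$
g(x):=\inf_{y\in F}\bigl(f(y)+c\,|x-y|^\alpha\bigr),\qquad x\in\mathbb{R}^p.
$$
First I would check that $g(x)>-\infty$ for every $x$: fix an arbitrary reference point $x_0\in F$ (if $F=\emptyset$ the theorem is trivial), then for every $y\in F$ the snowflake triangle inequality and the H\"older property of $f$ give
$$
f(y)+c\,|x-y|^\alpha\ \ge\ f(x_0)-c\,|x_0-y|^\alpha+c\,|x-y|^\alpha\ \ge\ f(x_0)-c\,|x_0-x|^\alpha,
$$
so the infimum is bounded below. Hence $g\colon\mathbb{R}^p\to\mathbb{R}$ is well-defined.

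Next I would verify that $g$ extends $f$. For $x\in F$ one direction is immediate by taking $y=x$ in the inf, giving $g(x)\le f(x)$. For the other direction, the H\"older condition on $f$ yields, for every $y\in F$,
$$
f(y)+c\,|x-y|^\alpha\ \ge\ f(x)-c\,|x-y|^\alpha+c\,|x-y|^\alpha\ =\ f(x),
$$
so $g(x)\ge f(x)$, hence $g(x)=f(x)$.

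Finally I would check the $c$-H\"older-$\alpha$ property of $g$ on $\mathbb{R}^p$. For any $x_1,x_2\in\mathbb{R}^p$ and $y\in F$, the snowflake triangle inequality gives
$$
f(y)+c\,|x_1-y|^\alpha\ \le\ f(y)+c\,|x_2-y|^\alpha+c\,|x_1-x_2|^\alpha.
$$
Taking the infimum over $y\in F$ on the right yields $g(x_1)\le g(x_2)+c\,|x_1-x_2|^\alpha$; swapping $x_1$ and $x_2$ finishes the proof. There is no real obstacle here: the only non-obvious ingredient is the sub-additivity $(a+b)^\alpha\le a^\alpha+b^\alpha$, which one can either prove by noting that $t\mapsto t^\alpha$ is concave with $0^\alpha=0$, or by a direct differentiation argument. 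Everything else is formal manipulation of the infimum-convolution formula.
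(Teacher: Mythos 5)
Your proof is correct: the infimal convolution $g(x)=\inf_{y\in F}\bigl(f(y)+c\,|x-y|^\alpha\bigr)$ is well defined by the lower bound you give, agrees with $f$ on $F$, and is $c$-H\"older-$\alpha$ on all of $\mathbb{R}^p$, the only ingredient beyond formal manipulation being the subadditivity $(a+b)^\alpha\le a^\alpha+b^\alpha$, which indeed makes $|x-y|^\alpha$ a metric and reduces everything to the McShane--Whitney Lipschitz extension. This is, however, a different route from the paper: the paper does not prove the statement at all, but imports it as a consequence of Theorem 1 of the cited H\"older extension reference \cite{[GrunbHolderext]}. What your argument buys is a short, completely self-contained and elementary proof that preserves the exact constant $c$ and exponent $\alpha$, which is all the paper ever uses; what the citation buys the authors is brevity and, implicitly, access to a more general extension theorem (such results are typically stated for more general domains or target spaces, where the simple inf-convolution formula is not available). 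For the real-valued setting of this theorem your approach is fully adequate, and it is worth noting explicitly, as you do, that the empty-domain case is trivial and that the case $\alpha=1$ is literally McShane's lemma.
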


Next we prove the following general lemma, which will turn out to be rather useful in the study of generic properties of H\"older functions:


\begin{lemma} \label{lipschitzapprox}
Assume that $F$ is compact and $c>0$ is fixed. 
Then the Lipschitz $c$-H\"older-$\alpha$ functions defined on $F$ form a dense subset of the $c$-H\"older-$\alpha$ functions.
\end{lemma}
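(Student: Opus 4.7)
The plan is to combine the Hölder extension theorem (Theorem \ref{*Grunb}) with mollification, using the crucial observation that convolution against a probability kernel preserves the Hölder constant, not only some larger constant.

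First, given an arbitrary $f \in C^{\aaa}_{c}(F)$ and $\eee > 0$, I would invoke Theorem \ref{*Grunb} to extend $f$ to a $c$-Hölder-$\alpha$ function $\tilde{f}\colon \R^p \to \R$. Then for $r > 0$ I set $g_r := \eta_r * \tilde{f}$, the convolution with the mollifier $\eta_r$ introduced in Section \ref{*secprel}. Since $\eta_r \in C^{\infty}_c(\R^p)$, the function $g_r$ is $C^{\infty}$ on $\R^p$, so in particular its gradient is bounded on the compact $1$-neighborhood of $F$, which makes $g_r|_F$ Lipschitz.

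Next I would verify that $g_r$ remains $c$-Hölder-$\alpha$. For any $x, y \in \R^p$, since $\int \eta_r = 1$ and $\eta_r \geq 0$, a direct change of variables gives
\begin{equation*}
|g_r(x) - g_r(y)| = \left| \int \eta_r(z)\bigl(\tilde{f}(x-z) - \tilde{f}(y-z)\bigr)\,dz \right| \leq \int \eta_r(z) \cdot c|x-y|^{\aaa}\,dz = c|x-y|^{\aaa}.
\end{equation*}
Thus $g_r|_F \in C^{\aaa}_c(F)$ and, by the previous paragraph, is Lipschitz.

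Finally I need uniform approximation: since $\tilde{f}$ is continuous on $\R^p$ and uniformly continuous on any compact neighborhood of $F$, the standard fact $\eta_r * \tilde{f} \to \tilde{f}$ uniformly on $F$ as $r \to 0^+$ yields $\|g_r - f\|_{\oo, F} < \eee$ for sufficiently small $r$. Restricting such a $g_r$ to $F$ produces the required Lipschitz $c$-Hölder-$\alpha$ approximant.

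I do not expect any real obstacle here: the only point that requires attention is that the Hölder constant is preserved exactly (not enlarged) under mollification, which is why the probability-measure normalization $\int \eta_r = 1$ is essential; and that the extension step uses Theorem \ref{*Grunb} precisely because it also preserves the constant $c$ rather than just the Hölder class.
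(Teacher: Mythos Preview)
Your proof is correct and follows essentially the same argument as the paper: extend via Theorem \ref{*Grunb}, mollify with $\eta_r$, observe that convolution against a nonnegative kernel of total mass one preserves the Hölder constant $c$ exactly, and use smoothness of the mollified function on a compact neighborhood of $F$ to obtain the Lipschitz property. The only cosmetic difference is that the paper takes a compact convex superset $F'$ of $F$ to bound the gradient, whereas you use the closed $1$-neighborhood; both serve the same purpose.
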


\begin{proof}
Consider an arbitrary $c$-H\"older-$\alpha$ function $f:F\to\mathbb{R}$ and fix  $\varepsilon>0$. 
 By using Theorem \ref{*Grunb} we
 extend $f$ to $\mathbb{R}^p$.  The  $c$-H\"older-$\alpha$ function obtained this way will be still denoted by $f$. 
It is known by the theory of mollifiers that if we consider the convolution $f_r = f * \eta_r$, it is a $C^{\infty}$ function and $f_r \to f$ in the supremum norm on any compact subset of $\R^{p}$ as $r\to 0+$. 
Moreover, $f_r$ restricted to $F$ is $c$-H\"older-$\alpha$ as well. 
Indeed, for $x,y\in F$, due to the triangle inequality and the fact that the support of $\eta_r$ is $\{|z|<r\}$,
\begin{equation} \begin{split} |f_r(x)-f_r(y)| &=\left|\int_{\mathbb{R}^p} \eta_r(z)f(x-z)dz - \int_{\mathbb{R}^p} \eta_r(z)f(y-z)dz\right| \\ &\leq \int_{\mathbb{R}^p}\eta_r(z)|f(x-z)-f(y-z)|dz \\ &= \int_{\{|z|<r\}}\eta_r(z)|f(x-z)-f(y-z)|dz \\ &\leq c |x-y|^{\alpha}\int_{\{|z|<r\}}\eta_r(z)dz \\ &= c |x-y|^{\alpha}. \end{split} \end{equation}
Consequently, we can fix $r$ such that the restriction  of $f_r$ to $F$ is a $c$-H\"older-$\alpha$ function in the $\varepsilon$-neighborhood of the restriction $f$ to $F$ in the supremum norm. 
Suppose that $F'$ is  a  compact convex set containing $F$.
As $f_r$ is smooth, its derivative on $F'$ is bounded. 
Consequently, $f_r$ is $K$-Lipschitz on $F'\supset F$ for some $K>0$.
\end{proof}

Approximations by piecewise affine functions in the space $C_1^{\alpha}(F)$ are important as well. We will prove a lemma of this nature, but in order to avoid ambiguity, we first provide a precise definition:


\begin{definition}\label{*defpcaff}
A function $f:F\to\R$
is piecewise affine on $F\sse \R^{p}$,
if  we can find a system $\frs$ of non-overlapping (means disjoint interiors), non-degenerate closed $p$-simplices 
such that $F\sse \bigcup_{S\in \frs}S$, 
the set $\{S\in\frs :  S\cap B \neq\emptyset\}$ is finite  for every bounded $B\subset\R^p$, 
 and 
for any $S\in \frs$
the restriction of $f$ to any $S\cap F$
coincides with the restriction of an affine function to $S\cap F$.

\end{definition}

\begin{lemma} \label{piecewise_affine_approx}
Assume that $F$ is compact, $0<\alpha<1$, and $0<c$ are fixed. 
Then the locally non-constant piecewise affine $c^{-}$-H\"older-$\alpha$ functions defined on $F$ form a dense subset of the $c$-H\"older-$\alpha$ functions.
\end{lemma}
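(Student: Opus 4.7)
The plan is to build $g$ in four stages, controlling four scales in this order: a gap parameter $\delta$, a mollification radius $r$, a triangulation mesh $h$, and a perturbation size $\eta$. Given $f\in C^{\alpha}_{c}(F)$ and $\varepsilon>0$, first replace $f$ by $\tilde f:=(1-\delta)f$ with $\delta\in(0,1)$ so small that $\delta\|f\|_\infty<\varepsilon/4$; then $\tilde f$ is $c(1-\delta)$-H\"older-$\alpha$, leaving a definite gap $c\delta>0$ below $c$. Extend $\tilde f$ to $\R^p$ by Theorem \ref{*Grunb} and convolve, setting $\tilde f_r:=\tilde f*\eta_r$; as in the proof of Lemma \ref{lipschitzapprox}, $\tilde f_r$ is $C^\infty$ and still $c(1-\delta)$-H\"older-$\alpha$. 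Fix $r$ small enough that $\|\tilde f_r-\tilde f\|_\infty<\varepsilon/8$ on a bounded open neighbourhood $F'$ of $F$, and record the finite constants $L_r:=\|\nabla\tilde f_r\|_{L^\infty(F')}$ and $M_r:=\|\nabla^2\tilde f_r\|_{L^\infty(F')}$.

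Next fix a regular simplicial triangulation of $\R^p$ (e.g.\ the Kuhn triangulation rescaled to mesh $h$) and let $g_0$ be the piecewise linear interpolant of $\tilde f_r$ at the vertices. Standard finite-element estimates give $\|g_0-\tilde f_r\|_{L^\infty(F')}\le C_pM_rh^2$ and $\|\nabla g_0\|_{L^\infty(F')}\le L:=L_r+C_pM_rh$. Since $g_0$ is continuous and piecewise affine with each gradient bounded by $L$, it is globally $L$-Lipschitz on $F$, so for $x,y\in F$, $x\neq y$ one has the two competing estimates
\[
\frac{|g_0(x)-g_0(y)|}{|x-y|^\alpha}\le L|x-y|^{1-\alpha}\quad\text{and}\quad \frac{|g_0(x)-g_0(y)|}{|x-y|^\alpha}\le c(1-\delta)+\frac{2C_pM_rh^2}{|x-y|^\alpha}.
\]
The first bound, used on the range $|x-y|\le h$, is at most $Lh^{1-\alpha}\to 0$ as $h\to 0$ because $\alpha<1$. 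The second, used on $|x-y|\ge h$, is at most $c(1-\delta)+2C_pM_rh^{2-\alpha}\to c(1-\delta)$. Choosing $h$ small enough that both bounds are $\le c'':=c(1-\delta/2)<c$ and simultaneously $C_pM_rh^2<\varepsilon/8$, one obtains that $g_0$ is piecewise affine and $c^{-}$-H\"older-$\alpha$ with $\|g_0-f\|_\infty<\varepsilon/2$.

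To upgrade $g_0$ to a locally non-constant function, set $g(x):=g_0(x)+\eta\,a\cdot x$ for some $a\in\R^p$ and $\eta>0$; this remains piecewise affine. Because $|F|\le 1$, the perturbation contributes at most $\eta|a|$ both to the H\"older-$\alpha$ constant and to the sup-norm error on $F$, so for $\eta|a|$ small enough $g$ is still $c^{-}$-H\"older-$\alpha$ and $\|g-f\|_\infty<\varepsilon$. Now fix a countable dense subset $\{p_n\}$ of the accumulation set $F'\subseteq F$: since every neighbourhood of any accumulation point $p$ contains some $p_n$, ensuring local non-constancy of $g$ at each $p_n$ already yields local non-constancy at every $p\in F'$. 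For each $p_n$ and each of the finitely many simplices $S$ with $p_n\in\overline S$ and $F\cap S$ accumulating at $p_n$ from inside $S$, constancy of $g$ on $U\cap F\cap S$ for some neighbourhood $U$ of $p_n$ forces $\nabla g_0|_S+\eta a$ to be orthogonal to a nontrivial subspace of $\R^p$, i.e.\ confines $a$ to a proper affine subspace. The countable union over $n$ and the finite union over the relevant simplices has Lebesgue measure zero in $\R^p$; any generic small $a$ outside this set finishes the construction.

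The main obstacle is the H\"older estimate in the second paragraph: a naive piecewise linear interpolant of a $c$-H\"older-$\alpha$ function need not be $c^{-}$-H\"older-$\alpha$, because the local Lipschitz constant is comparable to the (large) $L_r$ and does not shrink with $h$. The construction succeeds precisely because $\alpha<1$ delivers two independent small factors, $h^{1-\alpha}$ on short scales and $h^{2-\alpha}$ on long scales (the latter requiring mollification to upgrade the H\"older function to $C^2$); both are needed to absorb $L_r$ into the prescribed gap $c\delta$. This matches the lemma's hypothesis $\alpha<1$, and makes it clear that the Lipschitz case $\alpha=1$ would require a different argument.
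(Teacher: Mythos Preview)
Your proof is correct and follows essentially the same strategy as the paper's: create a gap in the H\"older constant by scaling, smooth via mollification, take a piecewise linear interpolant on a regular simplicial mesh, control the H\"older constant by a two-scale argument exploiting $\alpha<1$, and finally perturb to achieve local non-constancy. The only notable differences are that you invoke $C^{2}$ finite-element interpolation error bounds (giving $O(h^{2})$ sup error and $O(h)$ gradient error) where the paper instead proves an elementary Lipschitz bound for the affine interpolant on a single simplex (Proposition~\ref{affine_approx_simplex}) and works only with $C^{0,1}$ regularity, and that you give an explicit genericity argument for the locally non-constant perturbation where the paper simply asserts one exists.
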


Before proving this lemma, we state and prove an auxiliary proposition which is surely known in some form:


\begin{proposition} \label{affine_approx_simplex}
Assume that $S\subseteq \mathbb{R}^p$ is a non-degenerate $p$-simplex with vertices $x_0, ..., x_p$, and $\widetilde{f}:\{x_0, ..., x_p \} \to \mathbb{R}$ is $K$-Lipschitz for some $K>0$. 
Let $a>0$ be the length of the longest edge of $S$, and 
let $b=\min_{0\leq i \leq  p} b_i$, where $b_i>0$ is the distance between $x_i$ and the hyperplane determined by the remaining vertices. 
Then the function $\overline{f}: S\to \mathbb{R}$ defined by
$$ \overline{f}(x) = \sum_{i=0}^{p} \gamma_i \widetilde{f}(x_i)$$
for any convex combination $x=\sum_{i=0}^{p} \gamma_i x_i$ is $M$-Lipschitz, where
$$M= (p+1)\cdot K  \cdot \frac{a}{b},$$
that is $M$ depends on $S$ only through $\frac{a}{b}$. 
In particular, it is invariant  with respect  to similarities.
\end{proposition}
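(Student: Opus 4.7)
The plan is to exploit the fact that $\overline{f}$ is by construction an affine function on $S$, so its Lipschitz constant equals the operator norm of its gradient, which can be controlled through the barycentric coordinate maps. Concretely, writing $x=\sum_{i=0}^p\gamma_i x_i$ and $y=\sum_{i=0}^p\delta_i x_i$ with $\gamma_i,\delta_i\ge 0$ and $\sum\gamma_i=\sum\delta_i=1$, one has
$$
\overline{f}(x)-\overline{f}(y)=\sum_{i=0}^p(\gamma_i-\delta_i)\widetilde{f}(x_i).
$$
Using $\sum_i(\gamma_i-\delta_i)=0$ I would rewrite this as a telescoping expression against the fixed value $\widetilde{f}(x_0)$,
$$
\overline{f}(x)-\overline{f}(y)=\sum_{i=1}^p(\gamma_i-\delta_i)\bigl(\widetilde{f}(x_i)-\widetilde{f}(x_0)\bigr),
$$
so that the $K$-Lipschitz property of $\widetilde f$ on the vertex set together with $|x_i-x_0|\le a$ yields the pointwise estimate $|\widetilde{f}(x_i)-\widetilde{f}(x_0)|\le Ka$.

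Next I would estimate $|\gamma_i-\delta_i|$ in terms of $|x-y|$. Each barycentric coordinate $\gamma_i:S\to[0,1]$ is the restriction of an affine function on $\mathbb{R}^p$ taking the value $1$ at $x_i$ and $0$ on the affine hyperplane $H_i$ spanned by the remaining vertices. Since $\gamma_i$ is affine and its level sets are hyperplanes parallel to $H_i$, the magnitude of its gradient is exactly $1/b_i$, the reciprocal of the distance from $x_i$ to $H_i$. This gives
$$
|\gamma_i(x)-\gamma_i(y)|\le \frac{1}{b_i}|x-y|\le \frac{1}{b}|x-y|.
$$
Combining the two bounds and summing over the $p+1$ indices,
$$
|\overline{f}(x)-\overline{f}(y)|\le Ka\sum_{i=0}^p|\gamma_i-\delta_i|\le Ka\cdot\frac{p+1}{b}|x-y|=(p+1)K\frac{a}{b}|x-y|,
$$
which is the desired Lipschitz estimate. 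The invariance under similarities is then immediate, because under a similarity of ratio $\lambda>0$ both $a$ and $b$ scale by $\lambda$, leaving $a/b$ unchanged.

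The only step that requires a small amount of care is the identification of $\|\nabla\gamma_i\|$ with $1/b_i$, and this is the point where I would be most explicit in the full write-up. A clean way is to observe that any unit vector $v$ orthogonal to $H_i$ satisfies $\gamma_i(x_i+tv)-\gamma_i(x_i)=-t/b_i$ for $t\in[0,b_i]$ (by affinity and the boundary values $1$ at $x_i$ and $0$ on $H_i$), whereas $\gamma_i$ is constant along every direction parallel to $H_i$. This pins down both the direction and magnitude of $\nabla\gamma_i$ and supplies the geometric content of the proposition; the remainder is bookkeeping.
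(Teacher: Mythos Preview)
Your proof is correct and follows essentially the same route as the paper: both arguments rest on the single geometric observation that the $i$th barycentric coordinate is an affine function with gradient of magnitude $1/b_i$, so that $|\gamma_i-\delta_i|\le |x-y|/b$ (equivalently, the paper writes $|x-x'|\ge |\gamma_0-\gamma_0'|\,b$). The only cosmetic differences are that the paper normalizes $\widetilde f$ by subtracting its minimum (you telescope against $\widetilde f(x_0)$ instead), and the paper singles out the index with maximal $|\gamma_i-\gamma_i'|$ rather than summing the individual bounds; your telescoped sum actually has only $p$ terms, so you could have obtained the slightly sharper constant $pKa/b$, but your stated bound matches the proposition.
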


\begin{proof}
As $S$ is the convex hull of its vertices and any two vertices are connected by an edge, its diameter equals $a$. 
Moreover, adding a constant to $\widetilde{f}$ does not change the assumption, nor the implication of the proposition. 
Consequently, we can assume that $\min{\widetilde{f}} = 0$, and hence by the $K$-Lipschitz property we have $\max{\widetilde{f}}\leq Ka$.

Consider now arbitrary points $x,x'$ in $S$ with
$$x=\sum_{i=0}^{p} \gamma_i x_i,\quad x'=\sum_{i=0}^{p} \gamma_i' x_i.$$
Without loss of generality, we can assume that $|\gamma_0 - \gamma_0'|$ is the maximal amongst the differences $|\gamma_i - \gamma_i'|$, as $i= 0, 1, ..., p$.
Then
\begin{equation} \label{tilde_f_difference}
|\overline{f}(x) - \overline{f}(x')| \leq \sum_{i=0}^{p} |\gamma_i - \gamma_i'| \widetilde{f}(x_i) \leq (p+1)\cdot|\gamma_0 - \gamma_0'|\cdot Ka
\end{equation}
where we use the bound on $\widetilde{f}$ in the last inequality. 
This quantity should be compared to the distance $|x-x'|$ to check the Lipschitz property of $\overline{f}$. 
However, one can easily see 
that the distance of $x$ from the hyperplane determined by $x_1, ..., x_p$ is $\gamma_0 b_0$, while the distance of $x'$ from the same hyperplane is $\gamma_0' b_0$. 
Consequently,
\begin{equation} \label{distance_estimate}
|x-x'|\geq |\gamma_0 - \gamma_0'| b_0 \geq |\gamma_0 - \gamma_0'| b.
\end{equation}
Comparing estimates \eqref{tilde_f_difference} and \eqref{distance_estimate}, we obtain
\begin{displaymath}
|\overline{f}(x) - \overline{f}(x')| \leq (p+1)\cdot K \cdot \frac{a}{b}|x-x'| = M|x-x'|. 
\end{displaymath}
\end{proof}

\begin{proof}[Proof of Lemma \ref{piecewise_affine_approx}]

Consider an arbitrary $c$-H\"older-$\alpha$ function $f:F\to\mathbb{R}$ and fix  $\varepsilon>0$. Since $F$ is compact we can choose $0<\gggg<1$
such that $\left\|f- \gggg f\right \|_\infty <\varepsilon/4$. Then $\gggg f$
is $c'$-H\"older-$\aaa$ on $F$ with $c'=c\gggg<c$.
The proof starts similarly to the proof of Lemma \ref{lipschitzapprox}:  using Theorem \ref{*Grunb} we extend $\ggg f$ to $\mathbb{R}^p$ such that it is still $c'$-H\"older-$\alpha$. We select  a closed hypercube $F'$ containg $F$
in its interior.
By Lemma \ref{lipschitzapprox}  we can find a $K$-Lipschitz, $c'$-H\"older-$\alpha$ function $\widetilde{f}$ for some $K>0$ with domain $F'$ such that on $F'$ we have
$$\left\|\widetilde{f}- \gggg f\right \|_\infty <\frac{\varepsilon}{4},
\text{ which implies }
\left\|\widetilde{f}- f\right \|_\infty <\frac{\varepsilon}{2}.$$
 By introducing a further perturbation to $\widetilde{f}$ 
we will obtain a piecewise affine $c$-H\"older-$\alpha$ function $\overline{f}$ satisfying
\begin{equation}\label{*epscl}
\left\|\overline{f}- f\right \|_\infty <\varepsilon \text{ on }F.
\end{equation}
 To this end, fix any finite subdivision $\mathcal{U}$ of the unit hypercube into non-overlapping, non-degenerate $p$-simplices.
(The existence of such a  simplicial subdivision  is simple to see.)
Now divide $F'$ into uniform, non-overlapping hypercubes such that their diameter is below some constant $\delta>0$ to be fixed later. 
Let us divide these hypercubes further according to $\mathcal{U}$, that is denoting by $\Phi_Q$ a similarity from the unit hypercube onto a hypercube $Q$ take the subdivision $\{ \Phi_Q(S): S\in  \mathcal{U}\}$. 
Now if a simplex arising from this decomposition of $F'$ has vertices $x_0, ..., x_p$, for any convex combination $x=\sum_{i=0}^{p} \gamma_i x_i$ let
$$ \overline{f}(x) = \sum_{i=0}^{p} \gamma_i \widetilde{f}(x_i).$$
 Observe that 
\begin{equation}\label{*offdist}
|\overline{f}(x)-f(x)|\leq  \sum_{i=0}^{p} \gamma_i |\widetilde{f}(x_i)-\widetilde{f}(x)|+|\widetilde{f}(x)-f(x)|\leq K\delta+\frac{\eee}{2}<\frac{3\eee}{4},
\end{equation}
if $\delta< {\eee}/(4K) $. 
According to Proposition \ref{affine_approx_simplex}, the resulting function $\overline{f}$ is Lipschitz restricted to any of the small simplices, where the Lipschitz constant is invariant to similarities. 
However any of these small simplices is similar to a simplex 
$S\in \mathcal{U}$,  and  as $\mathcal{U}$ is finite, there are finitely many such $S$s. 
Consequently, we can choose some $M$ independently from $\delta$, such that $\overline{f}$ is $M$-Lipschitz restricted to any of the small simplices. 
Hence $\overline{f}$ is clearly $M$-Lipschitz on $F'$ as well, since any line segment in $F'$ is the finite union of line segments contained by small simplices. 

Choose and fix $c''\in (c',c)$.
Consider now arbitrary $x,y \in F'$. 
Due to the Lipschitz property of $\overline{f}$,
$$|\overline{f}(x) - \overline{f}(y)| \leq M|x-y| =M|x-y|^{1-\aaa}|x-y|^{\aaa} \leq c'' |x-y|^{\alpha}$$
if $|x-y| \leq \left(\frac{c''}{M}\right)^{\frac{1}{1-\alpha}}$. 
That is, if $x,y$ are close enough, the desired H\"older bound holds. 
Hence in what follows we can restrict our arguments to $x,y$ with $|x-y|> \left(\frac{c''}{M}\right)^{\frac{1}{1-\alpha}},$ bounded away from $0$.

 We can find  vertices $x', y'$  of the small simplices which are at most $\delta$ apart from $x, y$, respectively.  We have that 
$$|\overline{f}(x) - \overline{f}(y)| \leq |\overline{f}(x) - \overline{f}(x')| + |\overline{f}(x') - \overline{f}(y')| + |\overline{f}(y') - \overline{f}(y)|.$$
By estimating the first and the third term using the Lipschitz bound, and the second term using the H\"older bound (as $\overline{f}(x')=\widetilde{f}(x')$ and $\overline{f}(y')=\widetilde{f}(y')$),  we obtain
$$|\overline{f}(x) - \overline{f}(y)|\leq 2M \delta + c'|x' - y'|^{\alpha} \leq 2M \delta + c'(|x - y| + 2\delta )^{\alpha}.$$
As $\delta\to 0+$, the expression on the right hand side tends to $c'|x-y|^{\alpha}$. 
Consequently, as $|x-y|$ is bounded away from $0$, for small enough $\delta$ it is always smaller than $c''|x-y|^{\alpha}$. 

By using \eqref{*offdist},  the piecewise affine function $\overline{f}$ can be perturbed a bit to obtain 
a  locally non-constant,  piecewise   affine  $c^{-}$-H\"older-$\aaa$ function, still denoted by $\overline{f}$, for which \eqref{*epscl} holds.
\end{proof}


\section{Upper bound for $D_*(\alpha, F)$ }\label{*secub}

Our goal now is to prove
Theorem \ref{thm:trivial_upper_bound} which gives
 an upper bound for $D_*(\alpha, F)$ for an arbitrary 
$F\sse \R^{p}$.
The next simple lemma is probably known. 
Since we were unable to find a reference to it we provide its short and simple proof.

\begin{lemma} \label{lemma:slicing}
For any bounded measurable set $F\subseteq \mathbb{R}^{p}$ and $(p-1)$-dimensional hyperplane $L$ with unit normal vector $v$, we have that 
$$\overline{\dim}_B \left((L+tv)\cap F \right) \leq \max\{0,\overline{\dim}_B (F) -1\}$$
for  Lebesgue  almost every $t\in\mathbb{R}$.
\end{lemma}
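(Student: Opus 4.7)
The plan is to combine a Fubini-type estimate with a Borel--Cantelli argument. Set $d=\overline{\dim}_B F$, and let $G_N$ denote the family of closed $2^{-N}$ grid cubes meeting $F$, so $|G_N|=a_N(F)$. Define
\begin{equation*}
S_N(t)=\#\{Q\in G_N:Q\cap(L+tv)\neq\emptyset\}.
\end{equation*}
Since the union of such cubes covers $(L+tv)\cap F$, we have $a_N((L+tv)\cap F)\leq S_N(t)$, so the task reduces to bounding $S_N(t)$ appropriately for almost every $t$.

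The first key step is an integral bound. The orthogonal projection of any cube $Q\in G_N$ onto the line $\operatorname{span}(v)$ is a closed interval of length at most $\sqrt{p}\,2^{-N}$ (the diameter of $Q$), hence $\lambda\{t:Q\cap(L+tv)\neq\emptyset\}\leq\sqrt{p}\,2^{-N}$. Summing over $Q\in G_N$ and applying Fubini gives
\begin{equation*}
\int S_N(t)\,dt\leq \sqrt{p}\,2^{-N}\,a_N(F).
\end{equation*}
Note that $S_N$ is supported on a bounded interval in $t$ because $F$ is bounded, so integrability is not an issue.

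The second step is a routine Markov plus Borel--Cantelli argument. Fix $s>\max\{0,d-1\}$ and choose $\varepsilon>0$ with $s>d-1+\varepsilon$; this is possible regardless of whether $d\leq 1$, because $s>d-1$ holds in either case. For all sufficiently large $N$ we have $a_N(F)\leq 2^{N(d+\varepsilon)}$, so Markov's inequality combined with the Fubini bound yields
\begin{equation*}
\lambda\{t:S_N(t)\geq 2^{Ns}\}\leq \sqrt{p}\,2^{N(d+\varepsilon-1-s)},
\end{equation*}
which is summable in $N$. Borel--Cantelli then forces $S_N(t)<2^{Ns}$ for all but finitely many $N$ at almost every $t$, so $\overline{\dim}_B((L+tv)\cap F)\leq s$ on a full-measure set. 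Taking a countable sequence $s_k\downarrow \max\{0,d-1\}$ and intersecting the resulting full-measure sets completes the proof. I do not foresee a real obstacle; the only subtlety worth flagging is that the $\max$ on the right side of the bound correctly covers the case $d\leq 1$, where $s$ may be taken arbitrarily small (since then $s>0\geq d-1$) and Borel--Cantelli forces the integer-valued $S_N(t)$ to stay bounded on a.e.\ slice, giving upper box dimension $0$ as required.
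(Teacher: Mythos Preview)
Your proof is correct and follows essentially the same route as the paper: a Fubini-type count of cubes per slice followed by Borel--Cantelli. The only cosmetic differences are that the paper first rotates so that $v=e_p$ (making slices line up with grid columns) and then argues by pigeonhole that $\lambda(E_N)$ is small, whereas you keep the arbitrary direction, pick up the harmless $\sqrt{p}$ factor from the diameter, and replace the pigeonhole step with Markov's inequality applied to $\int S_N(t)\,dt$. One tiny quibble: in your closing remark about the case $d\leq 1$, the inequality $S_N(t)<2^{Ns}$ for small $s$ does not force $S_N(t)$ to be bounded, but this is irrelevant since your main argument (letting $s_k\downarrow\max\{0,d-1\}$) already yields $\overline{\dim}_B\leq 0$ directly.
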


\begin{proof}
As non-degenerate affine transformations do not change 
the dimension of sets  we can assume that $L$ equals the hyperplane spanned by the first $p-1$ basis vectors
 of the standard basis $(e_i)_{i=1}^{p}$, and $v=e_p$.

Recall Definition \ref{*defboxd} and
let $a_N(F)$ denote the number of $2^{-N}$ grid hypercubes intersected by $F$, and set $s= \max\{1,\overline{\dim}_B (F)\}$.
 Due to  the  definition of the upper box dimension, for every $\eps>0$ there exists $N_0\in\mathbb{N}$ such that for $N>N_0$ we have
\begin{equation} \label{eq:small_intersection}
a_N(F) \leq 2^{(s+\varepsilon)N}.
\end{equation}
For $N>N_0$, define $E_N\subseteq \mathbb{R}$ such that $t\in E_N$ if
\begin{equation} \label{eq:large_intersection}
a_N((L+tv)\cap F) > 2^{(s-1+2\varepsilon)N}.
\end{equation}
We claim that 
\begin{equation} \label{eq:E_N_small}
\lambda(E_N)\leq 2^{-\varepsilon N}.
\end{equation} Indeed, if the reversed inequality holds, then $E_N$ intersects the  interior of  at least $2^{(1-\varepsilon)N}$ grid intervals of length $2^{-N}$, and then by \eqref{eq:large_intersection}, we can deduce
$$a_N(F) > 2^{(s+\varepsilon)N},$$
contradicting \eqref{eq:small_intersection}. Hence \eqref{eq:E_N_small} is justified, which enables us to apply the Borel--Cantelli lemma to the sequence $(E_n)_{n=N_0+1}^{\infty}$. 
It yields that apart from a set of zero measure, for any $t\in\mathbb{R}$ we have
$$a_N((L+tv)\cap F) \leq 2^{(s-1+2\varepsilon)N}$$
for large enough $N$, yielding
$$\overline{\dim}_B \left((L+tv)\cap F \right) \leq  \max\{0,\overline{\dim}_B (F) -1\} + 2\varepsilon$$
for almost every $t$. 
It clearly  gives  the statement of the lemma. 
\end{proof}

\begin{proof}[Proof of Theorem \ref{thm:trivial_upper_bound}]
Every $f\in C^\alpha(F)$ is uniformly continuous on $F$, hence it has a unique continuous extension $f^*$ to $\overline{F}$ (where $\overline{F}$ is the closure of $F$). The function $f^*$ is in $C^\alpha(\overline F)$. Moreover, it is easy to see that $\phi\colon f\mapsto f^*$ is an isomorphism between $C^\alpha(F)$ and $C^\alpha(\overline F)$.
As $\udimb(F)=\udimb(\overline{F})$ and $f^{-1}(r)\subset (f^*)^{-1}(r)$, we can assume that $F$ is closed.

We will prove a stronger statement, notably that for the generic 1-H\"older-$\alpha$ function $f:F\to\mathbb{R}$ and for almost every $r\in\mathbb{R}$ we have
$$\dimh f^{-1}(r)\leq \underline{\dim}_B f^{-1}(r) \leq  \max\{0,\overline{\dim}_B (F) - 1\}.$$
Since the first inequality above is always true we need to verify the second one.
We will calculate these box dimensions by estimating the number of $2^{-N}$ grid cubes intersected by $f^{-1}(r)$, which we denote by $a_N(f, r)$. 
Following this notation, we have
$$\underline{\dim}_B f^{-1}(r) = \underline{\lim}\frac{\log a_N(f, r)}{N\log 2}.$$
(Unless $a_N(f,r)$ is identically zero: in that case, this dimension is simply 0.)

Now for arbitrary $N\in\mathbb{N}$, $\varepsilon>0$, $\delta>0$ denote by $H_N(\varepsilon, \delta)$ the set of 1-H\"older-$\alpha$ functions,
$f$ for which there exists $E\subseteq \mathbb{R}$ with measure $\delta$, such that for any $r\in E$ and for any $m\geq{N}$ we have
$$a_m(f,r)>\left(s+ \varepsilon\right)^m,$$
where
$$s= \max\{1,\exp(\log 2 \cdot (\overline{\dim}_B (F) - 1))\}.$$
For the time being, assume that $H_N(\varepsilon, \delta)$ is nowhere dense for any $N, \varepsilon, \delta$. 
Taking countable union for $\delta=\frac{1}{k}$ shows that 
for $f$ not belonging to a meager set of 1-H\"older-$\alpha$ functions
$$a_m(f,r)>\left(s+ \varepsilon\right)^m$$
holds for any $m\geq{N}$ only in a Lebesgue null-set of $r$s. 
Similarly, taking a countable union for $N\in\mathbb{N}$ shows that 
for $f$ not belonging to a meager set of 1-H\"older-$\alpha$ functions we have that
$$a_m(f,r)\leq \left(s+\varepsilon\right)^m$$
for infinitely many $m$, except for a null-set of $r$s, and hence
$$\frac{\log a_m(f,r)}{m\log 2} \leq \frac{\log(s+\varepsilon)}{\log 2}.$$
However, it immediately yields that for any $\varepsilon>0$, in a residual set
of functions, $f$
$$\underline{\dim}_B f^{-1}(r) \leq \frac{\log(s+\varepsilon)}{\log 2},$$
except for a null-set of $r$s. 
Taking intersection for $\varepsilon=\frac{1}{l}$, $l\in \N$ then yields
$$\underline{\dim}_B f^{-1}(r) \leq \frac{\log s}{\log 2} = \max\{ 0, \overline{\dim}_B (F) - 1\}$$
in a residual set of 1-H\"older-$\alpha$ functions $f$ for almost every $r$,  which is the desired conclusion.
 
Consequently, to complete the proof of this theorem we need to show that $H=H_N(\varepsilon, \delta)$ is nowhere dense for any $N, \varepsilon, \delta$.
 
To this end,
using Lemma \ref{piecewise_affine_approx} fix a family $\mathcal{F}$ of locally non-constant 
piecewise affine 1-H\"older-$\alpha$ functions 
such that they form a dense subset of 1-H\"older-$\alpha$ functions, and fix $N, \varepsilon, \delta$. 
 Now it suffices to prove that any $f_0\in \mathcal{F}$ has a neighborhood $B(f_0, R_0)$ such that for any $f \in B(f_0, R_0)$, we have $f\notin H_N(\varepsilon, \delta)$. 

Assume that $f_0$ has $k$ affine pieces. 
It yields that any level set $f_0^{-1}(r)$ consists of the intersection of $F$ with pieces of at most $k$ hyperplanes. 
These hyperplanes admit only a finite number of different directions, that is they arise as the translation of finitely many fixed $(p-1)$-dimensional hyperplanes. 
Consequently, according to Lemma \ref{lemma:slicing}, the upper box dimension of $f_0^{-1}(r)$ is at most $ \max \{0, \overline{\dim}_B (F) - 1\}$ for almost every $r$. 
It yields that there exists a set $E\subseteq \mathbb{R}$ with 
\begin{equation} \label{eq:exceptional}
\lambda(E)<\frac{\delta}{2}
\end{equation}
and $n_0\in \mathbb{N}$ such that for any $r\notin E$ and $m>n_0$ we have
$$a_m(f_0, r)\leq \left(s+ \varepsilon\right)^m.$$
Fix such an $m>N$.

Now let $\mathcal{H}$ be the family of $2^{-N}$ grid cubes intersected by $F$. 
For any $R>0$, we can define
$$E_1(R) = \bigcup_{T \in  \mathcal{H}}  U_R( f_0(T\cap F))\setminus  f_0(T\cap F).$$
 Since $F$ is compact $ f_0(T\cap F)$ is also compact.  We can fix  a sufficiently small $R>0$  such that for $E_1=E_1(R)$ we have
\begin{equation} \label{eq:new_values}
\lambda(E_1) < \frac{\delta}{2}.
\end{equation}
However, if $r\notin E_1$, for any $f\in B(f_0, R)$ we have that $a_m(f_0, r) \ge a_m(f, r)$, as  $f^{-1}(r)\cap T\neq\emptyset$ implies $f_0^{-1}(r)\cap T\neq\emptyset$.  
Putting together \eqref{eq:exceptional} and \eqref{eq:new_values}  we obtain  that for any $f\in B(f_0, R)$, apart from the set
$$E' = E \cup E_1, $$ 
for any $r$ and $f\in B(f_0, R)$ we have
$$a_m(f, r)\leq \left(s+ \varepsilon \right)^m.$$
 Since $ \lambda(E')<\delta$
 
it verifies that $H_N(\varepsilon,\delta)$ is nowhere dense. 
It concludes the proof.
\end{proof}



\section{Computation of $D_*(\alpha,F)$ for an example}\label{*secex}

In this section we prove Theorem \ref{*thex}.

\begin{lemma}\label{lemma 1 d}
If $F\subset\R^2$ is closed, $f\colon\R^2\to\R$ and $\lambda\times\lambda(\{(x,f(x,y)) : (x,y)\in F\}) > 0$, then $D_*^f(F) \ge 1$.
\end{lemma}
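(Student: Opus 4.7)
The plan is to combine Fubini's theorem on the image set with the observation that orthogonal projection onto a coordinate axis is $1$-Lipschitz, and hence cannot increase Hausdorff dimension. Let $\Phi\colon F\to\R^2$ denote the map $\Phi(x,y)=(x,f(x,y))$, and set $S=\Phi(F)$; by hypothesis, $\lambda\times\lambda(S)>0$.

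First I would slice $S$ by horizontal lines. By Fubini's theorem, the set
\[
R := \{ r\in\R : \lambda(S^r)>0 \},\qquad S^r := \{ x\in\R : (x,r)\in S \},
\]
has positive one-dimensional Lebesgue measure. Next I would identify these slices: by the definition of $\Phi$, the point $(x,r)$ lies in $S$ precisely when there exists $y$ with $(x,y)\in F$ and $f(x,y)=r$, so $S^r$ is exactly the image of $f^{-1}(r)\subset F$ under the coordinate projection $\pi_1\colon(x,y)\mapsto x$. Since $\pi_1$ is $1$-Lipschitz it does not increase Hausdorff dimension, so for every $r\in R$
\[
\dim_H f^{-1}(r) \;\geq\; \dim_H S^r \;=\; 1,
\]
the last equality being justified by $\lambda(S^r)>0$. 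By the definition of $D_*^f(F)$, this immediately gives $D_*^f(F)\geq 1$.

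The only substantive technical point I anticipate concerns measurability: $f$ is not assumed continuous (or even Borel), so a priori the image $S$ need not be Lebesgue measurable. I would read the hypothesis $\lambda\times\lambda(S)>0$ as an outer-measure statement and, if necessary, pass to a Borel subset $S_0\subseteq S$ with $\lambda\times\lambda(S_0)>0$ before invoking Fubini; since $S_0^r\subseteq S^r$, and every $x\in S_0^r$ is still the image of some point of $f^{-1}(r)$ under $\pi_1$, the argument goes through with $R$ replaced by $\{r:\lambda(S_0^r)>0\}$. Aside from this bookkeeping, the proof is essentially a one-line Fubini-plus-projection argument, with no further estimates required.
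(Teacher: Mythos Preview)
Your proof is correct and follows essentially the same route as the paper: slice the image set $S=\{(x,f(x,y)):(x,y)\in F\}$ horizontally via Fubini, identify the slice $S^r$ as the projection $\pi_1(f^{-1}(r)\cap F)$, and use that a set whose projection has positive $\lambda$-measure must have Hausdorff dimension at least $1$. The paper states this last step without mentioning the Lipschitz argument explicitly, and does not address the measurability point you raise; your treatment is simply a slightly more careful version of the same argument.
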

\begin{proof}
By Fubini's theorem, there exists a set $H\subset\R$ of positive measure such that for every $r\in H$ we have 
$$
\lambda\{x\in\R : \text{there exists a $y\in\R$ such that } (x,y)\in f^{-1}(r)\} > 0.
$$
That is the projection of $f^{-1}(r)$ onto the $x$ axis is of positive measure, and hence $\dimh\left(f^{-1}(r)\right)\ge 1$.
This implies $D_*^f(F) \ge 1$. 
\end{proof}

\begin{proof}[Proof of Theorem \ref{*thex}]
As $F$ is totally disconnected, its topological dimension is $0$, hence every level set of the typical continuous function defined on it is a singleton, hence $D_*(0,F)=0$ indeed.

Now fix an $\alpha\in (0,1]$. 
The upper estimate $D_*(\alpha,F)\le 1$ is obvious by Theorem \ref{thm:trivial_upper_bound}. 

Using Lemma \ref{piecewise_affine_approx} we can select a countable dense subset $\{f_m : m\in\N\}$ of $C_{1-}^\alpha([0,1/2]^2)$ which consists of locally piecewise affine functions. 
As every $f\in C_1^\alpha(F)$ have an extension in $C_1^\alpha([0,1/2]^2)$, $\{f_m|_{F} : m\in\N\}$ is dense in $C_1^\alpha(F)$.

Next we suppose that $m\in\N$ is fixed. 
Since $\partial_y f_m(x,y)$ takes finitely many different values, we can perturb $f_m$ by adding a function $\tau\cdot y$ with a suitably small $\tau$ to it such that $0<|\partial_y f_m(x,y)|$ wherever $\partial_y f_m(x,y)$ exists.
Thus we can assume that there is a $p_m>0$ such that $p_m<|\partial_y f_m(x,y)|$ (wherever $\partial_y f_m(x,y)$ exists).

Fix $k\ge2$ such that 
\begin{equation}\label{k nagy}
\sum_{l\ge k} 2^{l^2}\cdot \left(2^{-l^3}\right)^\alpha \le \frac{p_m\cdot 2^{-k^2}}{1000} =\frac{p_{m,k}}{1000},
\end{equation}
where $p_{m,k}:=p_m\cdot 2^{-k^2}$.

Since $f_m$ is piecewise affine on $[0,1/2]^2$, we can suppose that $k$ is so large that we can take $j,j'\in\Z$ such that letting $I_i := \left((i-1)\cdot2^{-k^2},(i+1)\cdot 2^{-k^2}\right)$ for $i\in\Z$ the function $f_m$ is affine on $Q_{j,j'} := I_j\times I_{j'}$ and $\lambda(F\cap Q_{j,j'})>0$.

Select a density point $x_0$ of $I_j\cap F_0$.
By our assumptions, $\partial_y f_m(x_0,y)$ takes the same non-zero  value for every $y\in I_{j'}$, and without limiting generality we can assume that it is positive. 
Set $y_0:=(j'-1)2^{-k^2}$ and $y_1:=(j'+1)2^{-k^2}$, that is $I_{j'} = [y_0,y_1]$.
Then 
\begin{equation}\label{y0y1}
f_m(x_0,y_1)-f_m(x_0,y_0) > 2p_{m,k}.
\end{equation}

Let $\delta_m := {p_{m,k}}/{10}$.
Suppose that $f\in B(f_m|_F,\delta_m)\cap C_1^\alpha(F)$.
Denote still by $f$ its $1$-H\"older-$\alpha$ extension to $[0,1/2]^2$.
By \eqref{y0y1}, 
\begin{equation*}
f(x_0,y_1)-f(x_0,y_0) > p_{  m,k }.
\end{equation*}
Since $x_0$ is a density point of $F_0$ and $f\in C_1^\alpha([0,1/2]^2)$ we can choose $\delta_0>0$ such that 
$\lambda(F_0\cap [x_0,x_0+\delta_0]) > 0.99\delta_0$ and $|f(x_0,y_i)-f(x,y_i)| \le 0.01 p_{m,k}$ for $x\in[x_0,x_0+\delta]$ and $i=0,1$.
This implies 
$$
[f(x_0,y_0)+0.01p_{m,k},f(x_0,y_1)-0.01p_{m,k}]\subset \{f(x,t) : t\in[y_0,y_1]\}
\text{ for $x\in[x_0,x_0+\delta]$,}
$$
and by \eqref{y0y1} we have $f(x,y_1)-f(x,y_0)>0.98 p_{  m,k }$.
Thus, 
\begin{equation*}
\begin{gathered}
\lambda\big(\{f(x,y) : y\in[y_0,y_1]\cap F_0\}\big) \\
\ge \lambda\big(\{f(x,y) : y\in[y_0,y_1]\}\big) - \lambda\big(\{f(x,y) : y\in[y_0,y_1]\setminus F_0\}\big)
\end{gathered}
\end{equation*}
(using the definition of $F_0$ and $f\in C_1^\alpha([0,1/2]^2)$ we can estimate the jumps on the intervals contiguous to $F_{0}$)
\begin{equation*}
\begin{gathered}
\ge 0.98 p_{m,k} -2 \sum_{l=k}^\infty 2^{l^2}\left(2^{-l^3}\right)^\alpha 
\underset{\text{by \eqref{k nagy}}}\ge 0.98 p_{m,k} -\frac{p_{m,k}}{500} > 0.9 p_{m,k}.
\end{gathered}
\end{equation*}

By Fubini's theorem 
\begin{align*}
0 &< \lambda\times\lambda\big(\big\{(x,f(x,y)): (x,y)\in\big([x_0,x_0+\delta_0]\cap F_0\big)\times\big([y_0,y_1]\cap F_0\big)\}\big) \\
&\le \lambda\times\lambda\big(\big\{(x,f(x,y)): (x,y)\in F\big\}\big).
\end{align*}
According to Lemma \ref{lemma 1 d}, this implies $D_*^f(F)\ge 1$.
Put $\cag = \bigcup_{m=1}^\infty B(f_m,\delta_m)\cap C_1^\alpha(F)$.
Then $\cag$ is an dense open subset of $C_1^\alpha(F)$ and for every $f\in\cag$ we have $D_*^f(F)\ge1$.

Therefore $D_*(\alpha,F)\ge 1$.
Since we also know that $D_*(\alpha,F)\le 1$, this completes the proof.
\end{proof}

\section{Dense $G_{\ddd}$ sets in which $D_*^f(F) = D_*(\alpha,F)$ for any $f$ }\label{*secdgd}



\begin{lemma}\label{*lemdfb}
Suppose that   $0< \aaa\leq 1$,   $F\subset\R^p$ is compact, $E\subset\R^p$ is open or closed, and $\cau\subset C_1^\alpha(F)$ is open.
If $\{f_1,f_2,\ldots\}$ is a countable dense subset of $\cau$, then there is a dense $G_\delta$ subset $\cag$ of $\cau$ such that 
\begin{equation}\label{eqdfb}
\sup_{f\in\cag} D_*^f(F\cap E) \le \sup_{k\in\N} D_*^{f_k}(F\cap E).
\end{equation}
\end{lemma}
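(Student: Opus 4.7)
The plan is to realise $\cag$ as a countable intersection of open dense subsets of $\cau$. Set $S := \sup_{k\in\N} D_*^{f_k}(F\cap E)$ (assume $S<\oo$; otherwise \eqref{eqdfb} is vacuous), and for each triple $(m,l,q)$ of positive integers set $s := S + 1/q$. For each $k\in\N$ I will exhibit $\rrr_k = \rrr_k(m,l,q) > 0$ such that $B_k := B(f_k,\rrr_k)\cap\cau$ is contained in $\cau$ and every $g\in B_k$ satisfies
\begin{equation*}
\lambda\bigl\{r\in\R : \cah^s_\oo\bigl(g^{-1}(r)\cap E\bigr) > 1/m\bigr\} < 1/l,
\end{equation*}
where $\cah^s_\oo$ denotes the $s$-dimensional Hausdorff content. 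Then $U_{m,l,q} := \bigcup_k B_k$ is open and dense in $\cau$ (it contains every $f_k$), and $\cag := \bigcap_{m,l,q} U_{m,l,q}$ is the desired dense $G_\ddd$. Indeed, for any $g\in\cag$, fixing a rational $s>S$ and letting first $l\to\oo$ and then $m\to\oo$ forces $\cah^s_\oo(g^{-1}(r)\cap E) = 0$ for almost every $r$; since $\cah^s_\oo = 0$ implies $\cah^s = 0$, this gives $\dim_H(g^{-1}(r)\cap E) \le s$ almost everywhere. Intersecting over the countable family of rational $s>S$ yields $\dim_H(g^{-1}(r)\cap E)\le S$ for almost every $r$, i.e.\ $D_*^g(F\cap E)\le S$.

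To produce $\rrr_k$, suppose first that $E$ is closed and set $h_\rrr^{(k)}(r) := \cah^s_\oo\bigl(f_k^{-1}([r-\rrr,r+\rrr])\cap E\bigr)$. Since $D_*^{f_k}(F\cap E)\le S<s$, the set of $r$ with $\dim_H(f_k^{-1}(r)\cap E)\ge s$ is Lebesgue null, so $\cah^s_\oo(f_k^{-1}(r)\cap E) = 0$ for almost every $r$. The slabs $f_k^{-1}([r-\rrr,r+\rrr])\cap E$ are compact (because $F$ is compact and $E$ is closed) and decrease to $f_k^{-1}(r)\cap E$ as $\rrr\downarrow 0$; the standard compactness argument---any open cover of the intersection with small total $s$-content eventually covers every member of a decreasing sequence of compacta converging to it---gives continuity of $\cah^s_\oo$ along such chains, so $h_\rrr^{(k)}(r)\downarrow 0$ for almost every $r$. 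Because $h_\rrr^{(k)}$ is monotone in $\rrr$ and the sets in question lie in a bounded interval, continuity of Lebesgue measure from above yields $\lambda\{r : h_\rrr^{(k)}(r) > 1/m\} \to 0$ as $\rrr\downarrow 0$, so I may pick $\rrr_k$ so small that this measure is less than $1/l$ and $B(f_k,\rrr_k)\cap C_1^\aaa(F)\sse\cau$. For any $g\in B_k$, $\|g-f_k\|_\oo<\rrr_k$ implies $g^{-1}(r)\cap E\sse f_k^{-1}([r-\rrr_k,r+\rrr_k])\cap E$, and monotonicity of $\cah^s_\oo$ transfers the bound from $f_k$ to $g$.

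When $E$ is open, I reduce to the closed-$E$ case by exhausting $E = \bigcup_n K_n$ with compact $K_n$. The closed-$E$ version applied to each $K_n$ supplies a dense $G_\ddd$ set $\cag_n\sse\cau$ on which $D_*^g(F\cap K_n)\le \sup_k D_*^{f_k}(F\cap K_n)\le S$. Using the countable stability $\dim_H(g^{-1}(r)\cap E) = \sup_n \dim_H(g^{-1}(r)\cap K_n)$ and $\sigma$-subadditivity of Lebesgue measure, $\cag := \bigcap_n \cag_n$ is a dense $G_\ddd$ on which $D_*^g(F\cap E)\le S$.

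The main technical obstacle I expect is the Lebesgue measurability of $r\mapsto h_\rrr^{(k)}(r)$, needed to speak of its superlevel set measures. I would handle it by restricting the infimum in $\cah^s_\oo$ to finite covers by open balls with rational centres and rational radii (a countable family, sufficient because the slabs are compact when $E$ is closed), and by observing that for each such finite cover $\{B_i\}$ the set $\{r : [r-\rrr,r+\rrr]\cap f_k(F\cap E\setminus \bigcup_i B_i) \ne \emptyset\}$ is the closed $\rrr$-neighborhood of the compact set $f_k(F\cap E\setminus\bigcup_i B_i)$, so its complement $\{r : f_k^{-1}([r-\rrr,r+\rrr])\cap E\sse \bigcup_i B_i\}$ is open. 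This expresses $\{r : h_\rrr^{(k)}(r)\le M\}$ as a countable intersection of countable unions of open sets, which is Borel and hence Lebesgue measurable.
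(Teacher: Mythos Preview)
Your proof is correct, and its overall architecture matches the paper's: both handle the closed-$E$ case first, build $\cag$ as $\bigcap\bigcup B(f_k,\rrr_k)\cap\cau$, exploit the inclusion $g^{-1}(r)\subset f_k^{-1}([r-\rrr_k,r+\rrr_k])$ for $g$ close to $f_k$, and then treat the open-$E$ case by exhausting with closed sets and taking a further countable intersection.

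Where you diverge is in the mechanism controlling the slabs. The paper fixes $D_1>S$, chooses for each $(k,n,r)$ an explicit open $\frac{1}{n}$-cover $\{U_{j,k,r,n}\}$ of $f_k^{-1}(r)\cap E$ with $\sum_j |U_{j,k,r,n}|^{D_1}<1$, then uses compactness of a large-measure subset $\mathbf{R}_{k,n}$ of levels to reduce to finitely many $r$-values and extract a single radius $\rho_{k,n}$; the conclusion for $f\in\cag$ comes from a Borel--Cantelli-type argument on the sets $\mathbf{R}_{k_n,n}$. Your route replaces this bookkeeping with the Hausdorff content $\cah^s_\infty$: you use upper semicontinuity of $\cah^s_\infty$ along the decreasing family of compact slabs (which is exactly the compactness argument you sketch) together with monotone convergence for the superlevel measures. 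This packages the cover-selection and the uniform-$\rho$ step into one cleaner limit, at the cost of having to address measurability of $r\mapsto h^{(k)}_\rrr(r)$, which you do carefully. The paper's explicit-cover approach avoids the measurability digression but is more combinatorial; your content-based approach is more conceptual and arguably shorter once the measurability point is accepted as routine.
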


\begin{proof}
First we assume that $E$ is closed.
We can suppose that $E\subset F$.

Since countable union of sets of measure zero is still of measure zero we can choose
a set $R_{0}\sse \R$ such that $\lll(\R\sm R_{0})=0$ and  for any $k$ 
\begin{equation}\label{*lrdfkr}
D^{f_{k}}(r,E)\le  \sup_{k'\in\N} D_*^{f_{k'}}(E) \text{ for any $r\in R_{0}$.}
\end{equation}

Suppose that $D_1>\sup_{k\in\N} D_*^{f_k}(E)$, and fix $k\in \N$ and $r\in R_{0}$.
Recall \eqref{*defdimh}. 
For every $\delta>0$ there exists   $ \{ U_{j,k,r} \}_{j=1}^{\oo}$, a  $\ddd\text{-cover of }f_{k}^{-1}(r)\cap E$ such that $\sum_{j}|U_{j,k,r}|^{D_1}<1 $.
As we remarked after \eqref{*defdimh} we can assume that the sets $U_{j,k,r}$ are open.  

Next we suppose that $k,n\in\N$ are fixed and for $r\in R_0$
we consider $\ddd=\frac{1}{n}$-covers, $\{ U_{j,k,r,n}\}$ of $f_{k}^{-1}(r)\cap E$. Of course, if $f_{k}^{-1}(r)\cap E$ is empty then it may happen that these covers are also empty.
As $E\sm \bigcup_{j}U_{j,k,r,n}$ is compact, $f_{k}$ is continuous
and $f_{k}(x)\not = r$ for every $x\in E\sm \bigcup_{j}U_{j,k,r,n}$, we have
\begin{equation}\label{rho_{k,n,r}}
0<\rrr_{k,n,r}:=\min\Big \{ 1, \inf \{ |f_{k}(x)-r|: x\in E\sm \bigcup_{j}U_{j,k,r,n} \} \Big \} \text{ for any }
r\in R_0
\end{equation}
(where the infimum of the empty set is $+\infty$ by convention).
Since $f_k$ is continuous, $f_{k}(F)$ is bounded.  Hence  
we can choose $\bbM_k$ such that
\begin{equation}\label{*ref1}
\text{$f_{k}^{-1}(r)\cap E=\ess$,
if $r\not \in (-\bbM_k+1,\bbM_k-1)$.
}
\end{equation}
Choose a compact subset 
\begin{equation}\label{*ref2}
\text{$\bbR_{k,n}\sse R_0\cap (-\bbM_k,\bbM_k)$
such that $\lll(\bbR_{k,n})>2\bbM_k-2^{-n}$.}
\end{equation}
Then we can choose a finite subset $\car_{k,n}\sse  \bbR_{k,n}$, such that
$\bbR_{k,n}\sse \bigcup_{r\in \car_{k,n}}(r-\rrr_{k,n,r},r+\rrr_{k,n,r})$.
Moreover, the compactness of $\bbR_{k,n}$ also yields that we can choose $\rho_{k,n}\in (0,1) $ such that
for any $r\in \bbR_{k,n}$
we can find $\bbr_{k,n}(r)\in  \car_{k,n}$
such that 
\begin{equation}\label{rho_{k,n}}
(r-\rho_{k,n},r+\rho_{k,n})\sse (\bbr_{k,n}(r)-\rrr_{k,n,\bbr_{k,n}(r)},\bbr_{k,n}(r)+\rrr_{k,n,\bbr_{k,n}(r)}).
\end{equation}

Let $\cag_{n}=\bigcup_{k}B(f_{k},\rrr_{k,n})\cap \cau$ and
$\cag =\bigcap_{n}\cag_{n}.$

Suppose $f\in \cag.$ Then  there exists a sequence $k_{n}$  such that $f\in B(f_{k_{n}},\rrr_{k_{n},n})$ for every $n$.
  
Set $R_\infty:=\bigcap_{m}\bigcup_{n\geq m} (\bbR_{k_n,n}\cup(\R\sm (-\bbM_{k_n},\bbM_{k_n})))$. 
By \eqref{*ref1}, \eqref{*ref2} and the Borel--Cantelli lemma, $\lambda(\R\setminus R_\infty)=0$, and for every $r\in R_\infty$ 
either $f^{-1}(r)\cap E=\ess$ or for infinitely many $n$ 
\begin{equation*}
\begin{split}
f^{-1}(r)
&\underset{\hphantom{\eqref{rho_{k,n}}}}\subset f_{k_n}^{-1}\big((r-\rho_{k_n,n},r+\rho_{k_n,n})\big)\cap E \\
&\underset{\eqref{rho_{k,n}}}\subset 
f_{k_n}^{-1}\big((\bbr_{k_n,n}(r)-\rho_{k_n,n,\bbr_{k_n,n}(r)},\bbr_{k_n,n}(r)+\rho_{k_n,n,\bbr_{k_n,n}(r)})\big) \cap E
\underset{\eqref{rho_{k,n,r}}}\subset \bigcup_j U_{j,k_{n},\bbr_{k_{n},n}(r),n},
\end{split}
\end{equation*} 
that is, the system $\{U_{j,k_{n},\bbr_{k_{n},n}(r),n}\}$ is a $\frac1n$-cover of $f^{-1}(r)\cap E$.
Thus, using the inequality $\sum_{j}|U_{j,k_{n},\bbr_{k_{n},n}(r),n}|^{D_1}<1$, we obtain  $\dim_{H}(f^{-1}(r)\cap E) \le D_1$ for a.e. $r\in \R$, and hence $D^{f}(r,F)\leq D_1$.
As $D_1>\sup_{k\in\N} D_*^{f_k}(E\cap F)$ was chosen arbitrarily \eqref{eqdfb} is satisfied.

Now suppose that $E$ is open.
For every $n\in\N$ set 
$$
E_n := \Big \{x\in E \cap F : \inf\{|x-y| : y\in F\setminus E\} \ge \frac1n\Big\}.
$$
Observe that $E_n \subseteq F $ is closed. We can apply the previously proved case to $E_n$. 
We obtain a dense $G_\delta$ subset $\cag'_n$ of $\cau$ such that $\sup_{f\in\cag'_n}D_*^f(E_n) \le \sup_{k\in\N} D_*^{f_k}(E_n)$.
Let $\cag:=\bigcap_{n=1}^\infty\cag'_n$.
If $f\in\cag$ then 
$$
D_*^f(F\cap E) 
=  \sup_{n\in\N} D_*^{f}(E_n)
\le \sup_{n\in\N} \sup_{k\in\N} D_*^{f_k}(E_n) 
\leq \sup_{k\in\N} D_*^{f_k}(F\cap E).
$$
\end{proof}

\begin{proof}[Proof of Theorem \ref{thm:generic_existence}]
Let $D_0 := D_*(\alpha,F)$.

For every $k\in\N$ choose a $\cag^k\in\mg_{1,\alpha}(F)$ for which $D_0-\frac1k\le \inf_{f\in\cag^k}D_*^f(F)$. 
Set $\cag_0=\bigcap_{k=1}^\infty \cag^k$. 
We have that $\cag_0\in\mg_{1,\alpha}(F)$ and $D_0\le \inf_{f\in\cag}D_*^f(F)$.
It is enough to prove that for every $k\in\N$ there is a $\cag_k\in\mg_{1,\alpha}$ such that 
\begin{equation}\label{D_*^f kicsi}
\sup_{f\in\cag_k}D_*^f(F) \le D_k :=D_0+\frac1k ,
\end{equation}
since then $\cag:=\bigcap_{k=0}^\infty \cag_k$ is a proper choice.

Fix $k\in\N$. 

The set $H_{k} := \{f\in\cag_0 : D_*^f(F)\le D_k\}$ cannot be nowhere dense in $C_1^\alpha(F)$, since otherwise $\cag':=\cag_0\setminus \cl(H_{k})$ would be in $\mg_{1,\alpha}(F)$ and it would hold that 
$$
\inf_{f\in\cag'}D_*^f(F)  \ge D_k > D_0 = D_*(\alpha,F),
$$ 
which contradicts the definition of $D_*(\alpha,F)$.
Hence we can take $f_1\in C_1^\alpha(F)$ and $\delta_1>0$ such that $H_{k}$ is dense in $B(f_1,\delta_1)\cap C_1^\alpha(F)$.
Choose a $\delta_2>0$ to satisfy $\delta_2^\alpha \le  {\delta_1}/{64}$.
As $F$ is compact, we can take a finite set $A\subset F$ such that $\bigcup_{a\in A}B(a,\delta_2)$ covers $F$.

Suppose that $a$ is fixed, $\eps>0$ and $g_0\in C_1^\alpha(F)$ is an arbitrary function.
Let $E:=B(a,\delta_2)\cap F$.
By the H\"older property, for every $f\in C_1^\alpha(F)$
$$
\diam\big(f(E)\big)\le (2\delta_2)^\alpha \le \frac{\delta_1}{32}.
$$
Thus setting 
$$
g_1(x) := \min\big\{\max\{g_0(x)-g_0(a)+f_1(a),f_1(x)-\delta_1/2\},f_1(x)+\delta_1/2\big\}.
$$
we obtain 
\begin{equation}\label{g_1}
g_1|_E=g_0|_E-g_0(a)+f_1(a)
\end{equation}
(since $g_1(a)=f_1(a)$ and $\diam\big(g_0(E)\big)+\diam\big(f_1(E)\big)\le  {\delta_1}/{16}$).
As $g_1\in B(f_1,\delta_1)$, 
 we can take $g_2\in H_k$ such that $\big|\big|g_1|_E-g_2|_E\big|\big|<\eps/100$.
Set 
$$
g_3(x) := \min\big\{\max\{g_2(x)-g_2(a)+g_0(a),g_0(x)-\eps\},g_0(x)+\eps\big\}.
$$
Obviously $\big|\big|g_3-g_0\big|\big|\le\eps$.
By \eqref{g_1}  and by the  definition of $g_2$, for every $x\in  E$
\begin{equation}\label{g_1_cau-ban}
\begin{gathered}
\big|g_2(x)-g_2(a)+g_0(a)-g_0(x)\big| \\
\le \big|g_2(x)-g_1(x))\big| + \big|g_1(a)-g_2(a))\big| + \big|g_0(a)-g_1(a) + g_1(x)-g_0(x)\big| \\
\le \eps/100+\eps/100+0 <\eps,
\end{gathered}
\end{equation}
hence $g_3(x) = g_2(x)-g_2(a)+g_0(a)$ for every $x\in  E$.
Thus $D_*^{g_3}( E) = D_*^{g_2}( E) \le D_k$ since $g_2\in H_k$.

To sum up, for every $g_0\in C_{1}^{\alpha}(F)$, $a\in A$ and $\eps>0$ we can find a $g_3\in C_{1}^{\alpha}(F)$ such that $||g_0-g_3||\le\eps$ and $D_*^{g_3}\big(B(a,\delta_2)\cap F\big) \le D_k$.
Consequently, by Lemma \ref{*lemdfb} for every $a\in A$ there is a $\cag_a^k\in\mg_{1,\alpha}$ satisfying 
$$\sup_{f\in\cag_a^k} D_*^f(B(a,\delta_2) \cap F) \le D_k.$$
Then \eqref{D_*^f kicsi} is true for $\cag^k := \bigcap_{a\in A} \cag_a^k$, which completes the proof.
\end{proof}


\section{Monotonicity of $D_*(\alpha,F)$ in $\alpha$}\label{*secmon}



\begin{proof}[Proof of Theorem \ref{thm:generic_monotonicity}]
Suppose that $\alpha'>\alpha>0$. If $C^{\alpha'}_1 (F)$ was dense in $C^{\alpha}_1(F)$, we could rely on the generic function in $C^{\alpha'}_1 (F)$ determining $D_*(\alpha',F)$ to obtain conclusions about $D_*(\alpha,F)$ in a rather standard way. However, it is not the case, which raises certain technical difficulties in connecting these function spaces. We handle it as follows.

The set $C^{\alpha'}(F) \cap C^{\alpha}_{1-}(F)$ is dense in the separable space $C^{\alpha}_1(F)$. Hence we can  select  a sequence 
$$(f_{k,1})_{k=1}^{\infty}\subseteq C^{\alpha'}(F) \cap C^{\alpha}_{1-}(F)$$
dense in $C^{\alpha}_1(F)$. 
Due to the two parts of this containment, we can find some $M_{k,1}>0$ and $0<c_{k,1}<1$, $k=1,2,...$ such that
$$f_{k,1}\in C^{\alpha'}_{M_{k,1}}(F) \cap C^{\alpha}_{c_{k,1}}(F)\text{ holds for $k=1,2,...$.}$$
Consequently, $\frac{1}{M_{k,1}}f_{k,1}\in C^{\alpha'}_{1}(F)$. Now due to Theorem \ref{thm:generic_existence}, there exists a dense $G_\delta$ set $\mathcal{G}_0\subseteq C^{\alpha'}_1(F)$ such that for any $f\in\mathcal{G}_0$ we have $D_{*}^f(F)=D_{*}(\alpha', F)$. This observation immediately yields the existence of a sequence $(f_{k,2})_{k=1}^{\infty}\in \mathcal{G}_0$ such that
\begin{equation} \label{eq:delta_k_def}
\left\|\frac{1}{M_{k,1}}f_{k,1} - f_{k,2}\right\|< \delta_k
\end{equation}
for some $\delta_k$ to be fixed later. By applying a simple rescaling, let 
$$f_{k,3} := {M_{k,1}}f_{k,2} \in C^{\alpha'}_{M_{k,1}}(F).$$
For any $k$ from $f_{k,2}\in \mathcal{G}_0$ it follows that $D_{*}^{f_{k,3}}(F)=D_{*}(\alpha', F)$. 

Now let us set $c_{k,2} = \frac{1+c_{k,1}}{2}\in (c_{k,1}, 1)$. Our claim is that for some well-chosen $\delta_k$, we have $f_{k,3}\in C^{\alpha}_{c_{k,2}}(F)$ as well. Momentarily assume that this claim holds. Then the proof can be concluded swiftly: by \eqref{eq:delta_k_def}, we have
\begin{equation} \label{eq:delta_k_deff}
\left\|f_{k,1} - f_{k,3}\right\|< \delta_k M_{k,1}<\frac{1}{k},
\end{equation}
where the second inequality can be guaranteed by the choice of $\delta_k$. This implies that the sequence $(f_{k,3})_{k=1}^{\infty}$ is dense in $C^{\alpha}_1(F)$ as well. Now to this sequence we can apply Lemma \ref{*lemdfb} with the roles $E=\mathbb{R}^p$ and $G=C^{\alpha}_1(F)$ to obtain a dense $G_\delta$ set $\mathcal{G}\subseteq C^{\alpha}_1(F)$ such that for any $f\in \mathcal{G}$ we have
$$D_{*}(\alpha, F) \le \sup_{f\in\cag} D_*^f(F) \le \sup_{k\in\N} D_*^{f_{k,3}}(F) = D_{*}(\alpha', F),$$
where the  second  inequality follows from the lemma, while the equality follows from 
the construction of the sequence $(f_{k,3})_{k=1}^{\infty}$. Altogether we obtain the statement of the theorem indeed.

It only remains to prove the above claim, that is for any $x,y\in F$ and $f=f_{k,3}$ we have
$$|f(x)-f(y)|\leq c_{k,2}|x-y|^{\alpha}.$$
We use the standard technique of separating two cases based on the distance $|x-y|$. Notably, assume first that
\begin{equation} \label{eq:small_distance_generic_mon}
|x-y|\leq \left(\frac{c_{k,2}}{M_{k,1}}\right)^{\frac{1}{\alpha'-\alpha}}.
\end{equation}
Then due to $f=f_{k,3}\in C^{\alpha'}_{M_{k,1}}(F)$ we have
$$|f(x)-f(y)|\leq M_{k,1}|x-y|^{\alpha'}=M_{k,1}|x-y|^{\alpha'-\alpha}|x-y|^\alpha\leq c_{k,2}|x-y|^\alpha,$$
where the last inequality directly follows from \eqref{eq:small_distance_generic_mon}. 

Now assume the opposite inequality concerning the distance $|x-y|$, that is
\begin{equation} \label{eq:large_distance_generic_mon}
|x-y|> \left(\frac{c_{k,2}}{M_{k,1}}\right)^{\frac{1}{\alpha'-\alpha}}.
\end{equation}
In this case, we appropriately substitute $f=f_{k,3}$ by $g=f_{k,1}$ and rely on $g\in C^{\alpha}_{c_{k,1}}(F)$. Notably,
$$|f(x)-f(y)|\leq |f(x)-g(x)|+|g(x)-g(y)|+|f(y)-g(y)|< c_{k,1}|x-y|^{\alpha} + 2\delta_k M_{k,1}$$ $$\leq c_{k,2}|x-y|^{\alpha},$$
where due to \eqref{eq:large_distance_generic_mon}, the last inequality follows from
$$2\delta_k M_{k,1}\leq (c_{k,2}-c_{k,1})\left(\frac{c_{k,2}}{M_{k,1}}\right)^{\frac{\alpha}{\alpha'-\alpha}},$$
which simply poses another restriction on the choice of $\delta_k$. Consequently, if $\delta_k$ satisfies this, then $f\in C^{\alpha}_{c_{k,2}}(F)$ indeed, and if the assumption \eqref{eq:delta_k_def} holds as well, then the concluding step of the proof is also valid. 
\end{proof}


\section{Self similar sets and $D_*(\aaa,F)$}\label{secd*aaa}

In  this section we prove Theorem \ref{*thessup}.

Since generic continuous functions are non-constant on sets consisting of more than two points, for connected $F$s containing at least two points the range of the generic continuous function
contains an interval and hence is of positive Lebesgue measure.

 As we mentioned in the introduction if $F$ is the disjoint union of two fractals
$F_{1}$ and $F_{2}$, with $D_{*}(\aaa, F_1)<D_{*}(\aaa, F_2)$ then it is easy to see that it is not necessarily true that for the generic $1$-H\"older-$\aaa$  function 
$D_{*}(\aaa, F)$ equals the Hausdorff dimension of almost every level set in the range of the function. 

Indeed, suppose that we put a scaled copy $S$ of the Sierpi\'nski triangle into $[0,1/4]\times [0,1/4]$, and $T$ denotes $ [1/2,3/4]\times \{ 0 \}$.
Put $F=S\cup T$. Suppose that $f(x,y)$ is a function which is constant $0$
 on $S$ and equals $1/8+x$ on $T$. 
 Results of  \cite{sierc} imply that $D_{*}(1/2, S)>0$ and
 $D_{*}(1/2, T)=0$  by Theorem  \ref{thm:trivial_upper_bound}.
 Then for some  generic $1$-H\"older-$1/2$ function $g$ in the ball $B(f,1/16)$  for almost every $r<1/16$   with $r\in g(S)$    we have $\dimh g^{-1}(r)= D_{*}(1/2, S)>0$
 and   for almost every $r>1/16$  we have $\dimh g^{-1}(r)=0$. 
As $g(S)\subset (-\infty,1/16)$ and $g(T)\subset (1/16,\infty)$, and $\lambda(g(S))>0$ and $\lambda(g(T))>0$ for a generic $g$,  this counterexample is valid.

We put $\DDD:=D_*(\aaa,F)$ and
\begin{equation}\label{*defkfd}
\kkk(f,\ddd):=\frac{\lll\{ r\in f(F):\dimh f^{-1}(r)> \DDD-\ddd \}}{\lll(f(F))}.
\end{equation}

The strategy of the proof of Theorem \ref{*thessup} is the following.
First we reduce it to Proposition \ref{*Ddo}.
In Lemma \ref{*clggd} we show that if we have a dense set of functions with
relatively small portion of level sets with Hausdorff dimension close to $\DDD$ then there is a dense $G_{\ddd}$ set of functions with the same property.
Based on this lemma in Proposition \ref{*ko} we show that for any $\ddd_{0}>0$
we can find a $\kkk_{0}>0$ and an open ball in $C_{1}^{\aaa}(F)$
such that for any function $f$ from this ball at least $\kkk_{0}$ portion of the range 
corresponds to level sets with Hausdorff dimension larger than $\DDD-\ddd_{0}$.
In the proof of Proposition \ref{*Ddo} we use rescaled (both in range and domain) affine versions of the functions from the ball in  Proposition \ref{*ko}. This way we obtain functions 
for which uniformly in any sufficiently large interval in the range of the function 
a portion of the range 
corresponds to level sets with Hausdorff dimension larger than $\DDD-\ddd_{0}$.
Finally, Lebesgue's density theorem will yield that almost every 
level set is of Hausdorff dimension larger than $\DDD-\ddd_{0}$ for functions in a dense $G_{\ddd}$ set. This will complete the proof of Proposition \ref{*Ddo}.

\begin{proposition}\label{*Ddo}
Suppose that $F$ is a connected self-similar set and $0<\alpha<1$. Then  for every $\ddd_{0}>0$  there exists a dense $G_{\ddd}$ set $\cag$
in $C_{1}^{\aaa}(F)$  such that 
for every $f\in \cag$, 
\begin{equation}\label{*Ddoeq}
\dimh f^{-1}(r)\geq \DDD-\ddd_{0} \quad\text{ for a.e. }r\in f(F).
\end{equation}
\end{proposition}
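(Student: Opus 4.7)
The plan follows the three-step roadmap sketched just before the proposition: produce a uniform density of ``good'' levels at every range scale via self-similar rescaling, upgrade the construction to a dense $G_\delta$, and conclude via the Lebesgue density theorem.

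First, I would invoke Proposition \ref{*ko} to obtain a ball $B(f_0,r_0)\cap C_1^\alpha(F)$ and a constant $\kappa_0>0$ such that every $f$ in this ball satisfies $\lambda\{r\in f(F):\dim_H f^{-1}(r)>\DDD-\delta_0\}\ge\kappa_0\,\lambda(f(F))$. Writing $F=\bigcup_{i=1}^m\fff_i(F)$ and $\fff_w=\fff_{i_1}\circ\cdots\circ\fff_{i_n}$ with ratio $\mathbf{q}_w$ for a word $w$, the rescaled function $x\mapsto t\bigl(f(\fff_w^{-1}(x))-c_w\bigr)$ on $\fff_w(F)$ is $(t\,\mathbf{q}_w^{-\alpha})$-H\"older-$\alpha$ whenever $f$ is $1$-H\"older-$\alpha$, and because $\fff_w$ is a similarity the Hausdorff dimensions of level sets, and hence the $\kappa_0$-proportion of good levels, are preserved. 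The hypothesis $\alpha<1$ is crucial here: taking $t=\varepsilon\,\mathbf{q}_w^{\alpha}$ shrinks the range of the copy by a factor $\varepsilon\mathbf{q}_w^{\alpha}$ while inflating the H\"older constant only by the factor $\varepsilon$, which produces the slack needed to insert many tiny copies while staying $1$-H\"older-$\alpha$.

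Second, I would prove that the family $\mathcal{D}_n$ of $g\in C_1^\alpha(F)$ satisfying the uniform density property
$$\lambda\{r\in I:\dim_H g^{-1}(r)>\DDD-\delta_0\}\ge\kappa_1|I|\quad\text{for every subinterval }I\subseteq g(F)\text{ with }|I|\ge 2^{-n}$$
is dense in $C_1^\alpha(F)$, for a suitable $\kappa_1\in(0,\kappa_0)$. Given an arbitrary target $g$ and $\varepsilon>0$, first approximate $g$ by a locally non-constant piecewise affine $c^-$-H\"older-$\alpha$ function $\widetilde g$ with $c<1$ (Lemma \ref{piecewise_affine_approx}). Then on each affine piece of $\widetilde g$, choose a dense collection of words $w$ whose ratios $\mathbf{q}_w$ lie in a small dyadic spatial scale, and on each $\fff_w(F)$ add an appropriately scaled and vertically shifted copy of an element $f\in B(f_0,r_0)$. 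The scalings absorb the slack $1-c$ to keep the perturbation $1$-H\"older-$\alpha$; the vertical shifts are distributed so that the small ranges of the inserted copies tile $\widetilde g(F)$ at rate $\kappa_1$. Connectedness of $F$ ensures that $\widetilde g(F)$ is an interval and that the inserted pieces fit continuously with the rest of the function.

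Third, I would feed $\mathcal{D}_n$ into Lemma \ref{*clggd} to obtain a dense $G_\delta$ set $\mathcal{G}_n$ on which the scale-$n$ uniform density property holds; the intersection $\mathcal{G}=\bigcap_n\mathcal{G}_n$ is the required dense $G_\delta$. For $f\in\mathcal{G}$, the set $B=\{r\in f(F):\dim_H f^{-1}(r)\le\DDD-\delta_0\}$ satisfies $\lambda(B\cap I)/|I|\le 1-\kappa_1$ for all sufficiently small intervals $I\subset f(F)$, so $B$ admits no Lebesgue density point, forcing $\lambda(B)=0$ and establishing \eqref{*Ddoeq}. The principal obstacle is the coordination in the second step: amplitudes, vertical shifts, and spatial scales of the inserted rescaled copies must simultaneously cover every subinterval of the final range at rate $\kappa_1$, keep the global function $1$-H\"older-$\alpha$, and patch continuously over $F$. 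These requirements are intertwined through the exponent $\alpha$, and the slack $\mathbf{q}_w^{1-\alpha}\to 0$ is precisely what makes them jointly achievable; at $\alpha=1$ this slack disappears, which is why the Lipschitz case demands a different approach.
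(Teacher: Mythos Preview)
Your overall strategy coincides with the paper's: invoke Proposition~\ref{*ko} to get a ball in which a fixed proportion of levels are ``good'', insert affinely rescaled copies of a function from that ball into small self-similar pieces $\fff_w(F)$ so that every sufficiently long subinterval of the range carries a fixed proportion of good levels, and finish with the Lebesgue density theorem. The use of piecewise affine $c^-$-H\"older approximants and the slack $\mathbf{q}_w^{1-\alpha}$ is exactly how the paper handles the H\"older constraint.

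There is, however, a genuine gap in your third step. Lemma~\ref{*clggd} goes in the \emph{opposite} direction from what you need: it starts from a dense family with $\kappa(f_n,\delta_0)<\kappa_0$ (a \emph{small} proportion of high-dimensional levels) and produces a dense $G_\delta$ on which $\kappa(f,\delta_0)\le\kappa_0$. Its mechanism is upper semicontinuity of $\dim_H$ along level sets---good $\delta$-covers of $f_n^{-1}(r)$ persist for nearby functions. You want to propagate a \emph{lower} bound on the proportion of high-dimensional levels from a dense set to a dense $G_\delta$, and Lemma~\ref{*clggd} gives you nothing for that. A priori, a function $g$ with many high-dimensional level sets can be approximated by functions whose level sets are all low-dimensional.

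The paper circumvents this by never invoking Lemma~\ref{*clggd} here. Instead it observes that the construction in your step~2 already yields an \emph{open} condition: if $g=f^*_{n,k}$ is the function with the inserted rescaled copies, then for any $f\in B(f^*_{n,k},\rho_{n,k})$ with $\rho_{n,k}<\tfrac12\mathbf{q}^\alpha(t)\rho_0$, the restriction $f|_{\fff_t(F)}$ corresponds under the affine rescaling to an element of $B(f_0^*,\rho_0)$, so Proposition~\ref{*ko} applies \emph{directly} to $f|_{\fff_t(F)}$ and delivers the $\kappa_0$-proportion of good levels in the small range interval $\mathbf{I}_{n,k}(t)$. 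Thus $\cag_k=\bigcup_n B(f^*_{n,k},\rho_{n,k})$ is dense open with the scale-$k$ density property built in, and $\cag=\bigcap_k\cag_k$ works. Your step~2 can be salvaged the same way: do not treat $\mathcal D_n$ as a mere dense set, but show that each constructed function carries an explicit radius on which the uniform density property survives, making $\mathcal D_n$ dense open.
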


We prove this later. Using this proposition it is very easy to prove Theorem \ref{*thessup}.

\begin{proof}[Proof of Theorem \ref{*thessup} based on Proposition \ref{*Ddo}]

Using Theorem \ref{thm:generic_existence} choose a dense $G_{\ddd}$ set $\cag_{0}$ such that $D_{*}^{f}(F)=D_{*}(\aaa,F)=\DDD$ for any $f\in \cag_0$.  This implies that if $f\in \cag_{0}$ then $\dimh f^{-1}(r) \leq \DDD$ for a.e. $r\in f(F)$.

For $\ddd_{0}=1/n$, $n\in \N$ select $\cag_{n}$ by using Proposition  \ref{*Ddo}
and set $\cag=\bigcap_{n=0}^{\oo}\cag_{n}$.
Then for every $f\in \cag$ we have $\dimh f^{-1}(r) = \DDD$ for a.e. $r\in f(F)$.
\end{proof}

Before proving Proposition \ref{*Ddo} we need the next lemma which is followed by Proposition \ref{*ko}.

\begin{lemma}\label{*clggd}
Suppose that $0<\kkk_{0}<1$  and there exists $\ddd_{0}>0$  such that  one can select 
a dense set $f_{n}\in C_{1}^{\aaa}(F)$ for which $\kkk(f_{n},\ddd_{0})<\kkk_{0}$.
Then there exists a dense $G_{\ddd}$ set $\cag^{\kkk_{0}}$  such that $\kkk(f,\ddd_{0})\leq \kkk_{0}$ for every $f\in \cag^{\kkk_{0}}$.
\end{lemma}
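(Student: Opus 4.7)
The plan is to exploit the left-continuity of the distribution of level-set dimensions in $\ddd$, and to build $\cag^{\kkk_0}$ as a countable intersection of dense $G_\ddd$ sets $\cag^m$ on which $\kkk(\cdot,\ddd_0-1/m)\le\kkk_0$. Writing $B_\ddd(f):=\{r\in f(F):\dimh f^{-1}(r)>\DDD-\ddd\}$, the sets $B_{\ddd_0-1/m}(f)$ increase to $B_{\ddd_0}(f)$ as $m\to\infty$, so monotone convergence gives $\kkk(f,\ddd_0)=\lim_m\kkk(f,\ddd_0-1/m)$ whenever $\lll(f(F))>0$ (the only case of interest, and the only case in which $\kkk(f_n,\ddd_0)<\kkk_0$ makes sense). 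Consequently once each $\cag^m$ is produced, $\cag^{\kkk_0}:=\bigcap_m\cag^m$ will work.

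\textbf{Construction of $\cag^m$.} I would adapt the covering machinery from the proof of Lemma \ref{*lemdfb}. Using separability of $C_1^\aaa(F)$, first replace the given dense family by a countable dense sequence $\{f_k\}$ still satisfying $\kkk(f_k,\ddd_0)<\kkk_0$. Fix $m$ and set $D_1:=\DDD-\ddd_0+1/m$. For each $k$, let $R_k:=f_k(F)\sm B_{\ddd_0}(f_k)$, so that $\lll(R_k)>(1-\kkk_0)\lll(f_k(F))$; for each scale $n\in\N$ inner-regularize to a compact $\bar R_{k,n}\sse R_k$ with $\lll(R_k\sm\bar R_{k,n})<1/n$. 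For each $r\in\bar R_{k,n}$, since $\dimh f_k^{-1}(r)\le\DDD-\ddd_0<D_1$, fix an open $1/n$-cover $\{U_{j,k,n,r}\}_j$ of $f_k^{-1}(r)$ with $\sum_j|U_{j,k,n,r}|^{D_1}<1$, define the positive gap
\[
\rrr_{k,n,r}:=\min\Big\{1,\inf\{|f_k(x)-r|:x\in F\sm\textstyle\bigcup_jU_{j,k,n,r}\}\Big\},
\]
and, using the compactness of $\bar R_{k,n}$, extract from a finite subcover of $\bar R_{k,n}$ a common Lebesgue number $\rrr_{k,n}\le 1/n$. Finally set $\cag^m_n:=\bigcup_k B(f_k,\rrr_{k,n}/2)\cap C_1^\aaa(F)$, which is open and dense because $\{f_k\}$ is dense, and $\cag^m:=\bigcap_n\cag^m_n$, which is a dense $G_\ddd$ set by the Baire category theorem in the complete metric space $C_1^\aaa(F)$.

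\textbf{Verification via a limsup/Fatou step.} For $f\in\cag^m$, pick for each $n$ an index $k_n$ with $f\in B(f_{k_n},\rrr_{k_n,n}/2)$ and set $A_n:=\bar R_{k_n,n}\cap f(F)$. The gap argument transplanted verbatim from Lemma \ref{*lemdfb} yields $f^{-1}(r')\sse\bigcup_j U_{j,k_n,n,\bar r(r')}$ for every $r'\in A_n$, and hence $\cah^{D_1}_{1/n}(f^{-1}(r'))<1$. For $r'\in\limsup_nA_n$ this holds along a subsequence $1/n_j\to 0$, so the elementary bound $\cah^s_\ddd(X)\le\ddd^{s-D_1}\cah^{D_1}_\ddd(X)$ for $s>D_1$, combined with the monotonicity of $\cah^s_\ddd$ in $\ddd$, forces $\cah^s(f^{-1}(r'))=0$ for every $s>D_1$, hence $\dimh f^{-1}(r')\le D_1$ and $r'\notin B_{\ddd_0-1/m}(f)$. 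Since $\rrr_{k_n,n}\le 1/n\to 0$ and $f(F)$ is compact, $\lll(f_{k_n}(F)\sm f(F))$ and $\lll(f(F)\sm f_{k_n}(F))$ both tend to $0$, whence $\lll(A_n)\ge(1-\kkk_0)\lll(f(F))-o(1)$. Fatou's lemma on the finite-measure set $f(F)$ then delivers
\[
\lll\bigl(f(F)\sm\textstyle\limsup_n A_n\bigr)\le\liminf_n\bigl(\lll(f(F))-\lll(A_n)\bigr)\le\kkk_0\lll(f(F)),
\]
and the inclusion $B_{\ddd_0-1/m}(f)\sse f(F)\sm\limsup_n A_n$ gives $\kkk(f,\ddd_0-1/m)\le\kkk_0$.

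\textbf{Main obstacle.} The central difficulty is that a cover of $f_k^{-1}(r)$ with $D_1$-content below $1$ only certifies $\cah^{D_1}_{1/n}$ at the single scale $1/n$ on which it was built, and the open neighborhood $B(f_k,\rrr_{k,n}/2)$ on which the cover transfers to nearby $f^{-1}(r')$ shrinks with $n$; no single open set around a fixed $f_k$ supplies covers at every scale simultaneously. This forces the approximating function to be allowed to vary with scale, which is exactly the Borel--Cantelli-style $\limsup$ device inherited from Lemma \ref{*lemdfb}: having covers at infinitely many scales is enough to pin down Hausdorff dimension, via the monotonicity of $\cah^s_\ddd$ in $\ddd$. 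A second, milder subtlety is that this machinery never controls $\kkk(f,\ddd_0)$ directly, only $\kkk(f,\ddd_0-1/m)$ for each $m$; the monotone-convergence limit $m\to\infty$ is what recovers the bound on $\kkk(f,\ddd_0)$, and it also explains why the lemma's conclusion is $\le\kkk_0$ rather than the strict inequality satisfied by the dense family $\{f_n\}$.
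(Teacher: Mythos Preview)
Your proof is correct and uses the same covering/limsup machinery as the paper: compact inner approximation of the ``good'' levels, open $1/n$-covers with controlled $D_1$-content, the gap function $\rrr$, a finite subcover/Lebesgue-number step, and a Borel--Cantelli/Fatou argument on the $\limsup$ of the good-level sets. The one organizational difference is that you introduce an outer parameter $m$ and build a separate dense $G_\ddd$ set $\cag^m$ on which $\kkk(f,\ddd_0-1/m)\le\kkk_0$, then intersect over $m$ and invoke monotone convergence $\kkk(f,\ddd_0-1/m)\uparrow\kkk(f,\ddd_0)$; the paper instead folds this into a \emph{single} $G_\ddd$ by letting the exponent vary with the scale, i.e.\ taking $1/k$-covers satisfying $\sum_j|U_{n,k,r,j}|^{\DDD-\ddd_0+1/k}<1$, so that running through the $\limsup$ over $k$ simultaneously shrinks the scale and the exponent slack and yields $\dimh f^{-1}(r)\le\DDD-\ddd_0$ directly. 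Both packagings are equivalent; the paper's is a bit more economical (no outer intersection), while yours makes the left-continuity step explicit and isolates clearly why only $\le\kkk_0$ (not the strict inequality) survives in the conclusion.
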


\begin{proof}
Given $k\in\N $ using our dense set we will select radii
$\ddd_{n,k}$. We will define $\cag_{k}=\bigcup_{n}B(f_{n},\ddd_{n,k})$ and
$\cag^{\kkk_{0}}=\bigcap_{k }\cag_{k}.$

Suppose that $n$ and $k$ are given. 
Set $$\mathbf{H}_n=\{ r\in f_{n}(F):\dimh f_n ^{-1}(r)\leq \DDD-\ddd_{0}\}.$$
By assumption $\kkk(f_{n},\ddd_{0})<\kkk_{0}$ and hence
$$\lll(\mathbf{H}_n)>(1-\kkk_{0})\lll (f_{n}(F)).$$
Select a compact set $\GGG_{n}\sse \mathbf{H}_n$  such that 
\begin{equation}\label{*Gnkdef}
\lll(\GGG_{n})>(1-\kkk_{0})\lll(f_{n}(F)).
\end{equation} 
Using the definition of the Hausdorff dimension for every $r\in\GGG_{n}$ we select open sets $U_{n,k,r,j}$
 such that 
 $f^{-1}_{n}(r)\sse \bigcup_{j} U_{n,k,r,j}$, $|U_{n,k,r,j}|<1/k$
 and
 \begin{equation}\label{*IIa}
 \sum_{j}|U_{n,k,r,j}|^{\DDD-\ddd_{0}+\frac{1}{k}}<1.
 \end{equation}
Put 
$$\rrr(n,k,r):=\min \{ |f_{n}(x)-r|: x\in F\sm \bigcup_{j}U_{n,k,r,j} \}>0,$$
where the last inequality holds due to the compactness of $F$.
Since $\GGG_{n}$ is also compact we can select finitely many $r_{l}\in \GGG_{n} $
 such that 
 $$\GGG_{n}\sse \bigcup_{l}\Big(r_{l}-\frac{\rrr(n,k,r_{l})}{2},r_{l}+\frac{\rrr(n,k,r_{l})}{2}\Big).$$
 Let $$\ddd_{n,k}=\min\Big \{ \frac{1}{n+k}, \min_{l}\big\{  \frac{\rrr(n,k,r_{l})}2\big \} \Big \}>0.$$
 
 Suppose that $f\in B(f_{n},\ddd_{n,k})$ and $r\in\GGG_{n}$. Then there exists an $l$  such that 
 $r\in (r_{l}-\frac{\rrr(n,k,r_{l})}{2},r_{l}+\frac{\rrr(n,k,r_{l})}{2})$.
 Suppose that $x\in f^{-1}(r)$.
 Then $$f_{n}(x)\in (r-\ddd_{n,k},r+\ddd_{n,k})\sse (r_{l}-\rrr(n,k,r_{l}),r_{l}+\rrr(n,k,r_{l})).$$
 Therefore $x\in \bigcup_{j}U_{n,k,r_{l},j}$ and
 \begin{equation}\label{*IIIa}
 f^{-1}(r)\sse \bigcup_{j}U_{n,k,r_{l},j}.
 \end{equation}
 
 Suppose that $f\in \cag^{\kkk_{0}}$. Then  there exists a sequence $n(k)$
  such that $f\in \bigcap_{k}B(f_{n(k)}, \ddd_{n(k),k})$.
 It is also clear that  $\lim_{k\to\infty} \lambda\left(f_{n(k)}(F)\triangle f(F)\right)=0$.
 We have
 $$\lll(\GGG_{n(k)})> (1-\kkk_{0})\lll(f_{n( k)}(F)).$$
 Let $\GGG_{f}:=\bigcap_{j=1}^{\oo}\bigcup_{k=j}^{\oo}\GGG_{n(k)}$.
 Then $\lll(\GGG_{f})\geq (1- \kkk_{0})\lll(f(F))$.
Suppose that $r\in \GGG_{f}.$

From \eqref{*IIa} and \eqref{*IIIa} we infer that
$$\dimh f^{-1}(r)\leq \DDD-\ddd_{0}.$$
This implies that 
$$\lll\{ r\in f(F):\dimh f^{-1}(r)> \DDD-\ddd_0 \}\leq \kkk_{0}\lll(f(F)),$$
that is $\kkk(f,\ddd_{0})\leq \kkk_{0.}$  
\end{proof}

 In the sequel we will take balls in $C_{1}^{\aaa}(F)$ and hence, for ease of notation we will consider balls in this space, that is for example we will write $ B(f_{0},\rrr_{0})$ instead of $ B(f_{0},\rrr_{0})\cap C_{1}^{\aaa}(F)$.

\begin{proposition}\label{*ko}
For every $\ddd_{0}>0$ there exist $0<\kkk_{0}\leq 1$, $f_{0}\in C_{1}^{\aaa}(F)$, and
$\rrr_{0}>0$  such that 
\begin{equation}\label{*eqko}
\kkk(f,\ddd_{0})\geq \kkk_{0}\quad \text{ for every }f\in B(f_{0},\rrr_{0}).
\end{equation}
\end{proposition}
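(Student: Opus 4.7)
I would argue by contradiction, leveraging Lemma \ref{*clggd}, Theorem \ref{thm:generic_existence}, and the Baire category theorem. First observe that the numerator in \eqref{*defkfd} is monotone in $\ddd$, so $\kkk(f,\ddd)$ is non-decreasing in $\ddd$. It therefore suffices to establish \eqref{*eqko} for arbitrarily small $\ddd_0$, so we may assume $0 < \ddd_0 < \DDD$. (The degenerate case $\DDD = 0$ is handled separately: any non-constant $f_0 \in C_1^\aaa(F)$ sits inside a ball $B(f_0, \rrr_0)$ consisting entirely of non-constant functions, and on the connected set $F$ every non-empty fibre has Hausdorff dimension at least $0 > -\ddd_0 = \DDD - \ddd_0$, so $\kkk_0 = 1$ works.)

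Now suppose towards contradiction that \eqref{*eqko} fails for the fixed $\ddd_0 \in (0, \DDD)$. Then for every $n \in \N$ the set $\{f \in C_1^\aaa(F) : \kkk(f,\ddd_0) < 1/n\}$ is dense in $C_1^\aaa(F)$. Extracting a countable dense subset and applying Lemma \ref{*clggd} with $\kkk_0 = 1/n$ produces a dense $G_\ddd$ set $\cag^{1/n} \subset C_1^\aaa(F)$ on which $\kkk(f, \ddd_0) \leq 1/n$. The Baire category theorem then ensures that $\cag^* := \bigcap_{n=1}^\infty \cag^{1/n}$ is again a dense $G_\ddd$, and every $f \in \cag^*$ satisfies $\kkk(f, \ddd_0) = 0$.

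Unwinding \eqref{*defkfd}, this means $\lll\{r \in f(F) : \dim_H f^{-1}(r) > \DDD - \ddd_0\} = 0$. Since $\DDD - \ddd_0 > 0$ and $f^{-1}(r) = \emptyset$ for $r \notin f(F)$, this upgrades to $\lll\{r \in \R : \dim_H f^{-1}(r) > \DDD - \ddd_0\} = 0$, whence $D_*^f \leq \DDD - \ddd_0 < \DDD$ for every $f \in \cag^*$. On the other hand, Theorem \ref{thm:generic_existence} supplies a dense $G_\ddd$ set $\cag_0$ on which $D_*^f = \DDD$; the intersection $\cag^* \cap \cag_0$ is a dense $G_\ddd$ by Baire, and any of its members simultaneously forces $D_*^f = \DDD$ and $D_*^f < \DDD$, the desired contradiction.

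The argument is short because the heavy lifting is packaged elsewhere: Lemma \ref{*clggd} promotes a dense set into a dense $G_\ddd$, Theorem \ref{thm:generic_existence} picks the generic dimension out of $D_*(\aaa, F)$, and Baire merges the two. The only place that genuinely needs care is translating $\kkk(f,\ddd_0) = 0$ into $D_*^f \leq \DDD - \ddd_0$; the opening monotonicity reduction, guaranteeing $\DDD - \ddd_0 > 0$, is what makes this step valid, since empty fibres then cannot contribute to $D_*^f$.
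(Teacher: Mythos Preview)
Your proof is correct and follows essentially the same route as the paper: negate, apply Lemma \ref{*clggd} with $\kkk_0=1/n$, intersect the resulting dense $G_\ddd$ sets to obtain a dense $G_\ddd$ on which $\kkk(f,\ddd_0)=0$, and contradict $D_*(\aaa,F)=\DDD$. The only cosmetic differences are your explicit reduction to $\ddd_0<\DDD$ (with the $\DDD=0$ case handled separately) and your appeal to Theorem \ref{thm:generic_existence} for the final contradiction, whereas the paper simply invokes the definition \eqref{*defDcsaF} of $D_*(\aaa,F)$ directly.
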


\begin{proof}
Proceeding towards a contradiction suppose that the statement of 
the proposition is not true. 
Then  there exists $\ddd_{0}$  such that for every $0<\kkk_{0}\leq 1$ one can select a dense set $f_{n}\in C_{1}^{\aaa}(F)$ such that $\kkk(f_{n},\ddd_{0})<\kkk_{0}$.
For $\kkk_{0,n}=1/n$ 
use Lemma \ref{*clggd} and
take the dense $G_{\ddd}$ sets, $\cag^{\kkk_{0,n}}$  such that $\kkk(f,\ddd_{0})<\kkk_{0,n}$ for every $f\in \cag^{\kkk_{0,n}}$. 

Let $\cag^{0}=\bigcap_{n=1}^{\oo} \cag^{\kkk_{0,n}}$. It is also dense $G_{\ddd}$. Suppose that $f\in \cag^{0}$. Then
$$\lll\{ r\in f(F):\dimh f^{-1}(r)> \DDD-\ddd_{0} \}\leq \kkk_{0,n}\lll(f(F))\text{ for all }n.$$
This implies $\lll\{ r\in f(F):\dimh f^{-1}(r)\geq \DDD-\ddd_{0} \}=0$, but
$\DDD-\ddd_{0}<\DDD=D_*(\aaa,F)$ and this contradicts the definition of $D_*(\aaa,F)$.
\end{proof}

Now we are ready to prove Proposition \ref{*Ddo}.

\begin{proof}[Proof of Proposition \ref{*Ddo}]
Without limiting generality we can suppose  that $|F|=1.$
By using Lemma \ref{piecewise_affine_approx} select 
a dense set $\{ f_{n}\}$ in $C_{1}^{\aaa}(F)$  consisting of locally non-constant piecewise affine $1^{-}$-H\"older-$\alpha$ functions.
Since $F$ is connected $f_{n}(F)=[m_{n},M_{n}]$ with $m_{n}<M_{n}$.
Since $f_{n}$ is piecewise affine, it is Lipschitz-$K_{n}$. 
Without limiting generality we assume that $K_{n}\geq 1$.
Since it is $1^{-}$-H\"older-$\alpha$  it is $c_{n}$-H\"older-$\aaa$
with a $c_{n}<1$.
We will select a sufficiently large $L_{n,k}>(n+k)(M_{n}-m_{n}+1)$.
Set 
$$\mathbf{p}_{n,k}(t)=m_{n}+t\cdot \frac{M_{n}-m_n}{L_{n,k}},\quad t=0,...,L_{n,k}-1.$$

For each $t$ choose $\mathbf{x}(t) \in F$ such that $f_{n}(\mathbf{x}(t))=\mathbf{p}_{n,k}(t)$.
If $t\not=t'$ then
$$K_{n}\geq K_{n} |\mathbf{x}(t)-\mathbf{x}(t')|\geq |f_{n}(\mathbf{x}(t))-f_{n}(\mathbf{x}(t'))|\geq \frac{M_{n}-m_n}{L_{n,k}}$$
implies 
\begin{equation}\label{*VI*a}
|\mathbf{x}(t)-\mathbf{x}(t')|\geq \frac{M_{n}-m_n}{L_{n,k}K_n}.
\end{equation}

By using self similarity of $F$ and $1/\aaa>1$ select a sufficiently large $L_{n,k}$ and a similarity $\FFF_{t}$  such that 
$\mathbf{x}(t)\in \FFF_{t}(F)$ and
\begin{equation}\label{*VI*b}
 \frac{M_{n}-m_n}{3L_{n,k}K_n}  >
\Big (\frac{M_{n}-m_n}{3L_{n,k}} \Big )^{1/\aaa}
\geq |\FFF_{t}(F)|>
\mathbf{q}_{\min} \Big (\frac{M_{n}-m_n}{3L_{n,k}} \Big )^{1/\aaa}.
\end{equation}
Observe that the sets $ \FFF_{t}(F)$, $t=1,...,L_{n,k}-1$ are pairwise disjoint due to \eqref{*VI*a} and 
\eqref{*VI*b}.
We denote by $\mathbf{q}(t)$ the similarity ratio of $\FFF_{t}$.
Since we supposed that $|F|=1$, we also have
\begin{equation}\label{*VI*bb}
\Big (\frac{M_{n}-m_n}{3L_{n,k}} \Big )^{1/\aaa}
\geq
\mathbf{q}(t)
 >
\mathbf{q}_{\min} \Big (\frac{M_{n}-m_n}{3L_{n,k}} \Big )^{1/\aaa}.
\end{equation}

Given $\ddd_{0}>0$, we select $\kkk_{0}>0$, $\rrr_{0}>0$
and $f_{0}^{*}$ according to Proposition \ref{*ko} such that 
\eqref{*eqko}
holds for $f_{0}^{*}$.
Without limiting generality we can suppose that $\rrr_{0}<1$ and $\mathbf{0}\in F$
and $f_{0}^{*}(\mathbf{0})=0$, where $\mathbf{0}$ denotes the origin in $\R^{p}$.       

Put $f_{0}=(1/2)f_{0}^{*}$. From $|F|=1$ and $f_{0}^{*}\in C_{1}^{\aaa}(F)$
it follows that $f_{0}\in C_{1/2}^{\aaa}(F)$ and $|f_{0}(x)|\leq 1/2$ for all $x\in F$.

For $x\in \FFF_{t}(F)$ put
\begin{equation}\label{*VII*b}
f_{n,k}(x):=f_{n}(\mathbf{x}(t))
+\mathbf{q}^{\aaa}(t)\big(f_{0}(\FFF_{t}^{-1}(x))-f_0(\FFF_{t}^{-1}(\mathbf{x}(t)))\big),\quad t=1,...,L_{n,k}-1.
\end{equation}
This way $f_{n,k}$ is well-defined on $F_{n,k}^*=\bigcup_{t=1}^{L_{n,k} -1}\FFF_{t}(F),$ since as we noted, the sets $\FFF_{t}(F)$ are disjoint.

\begin{claim}\label{*cl1}
If $L_{n,k}$ is sufficiently large then
\begin{equation}\label{*VII*aa}
|f_{n,k}(x)-f_{n}(x)|<\frac{1}{n+k}
\text{ for all }x\in F_{n,k}^*.
\end{equation}
\end{claim}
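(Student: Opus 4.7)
The plan is to control the discrepancy $|f_{n,k}(x)-f_n(x)|$ by splitting it via the triangle inequality around the reference value $f_n(\mathbf{x}(t))$. For $x\in\FFF_t(F)$ write
\[
|f_{n,k}(x)-f_n(x)| \le |f_n(\mathbf{x}(t))-f_n(x)| + \mathbf{q}^{\aaa}(t)\,\big|f_0(\FFF_t^{-1}(x))-f_0(\FFF_t^{-1}(\mathbf{x}(t)))\big|,
\]
where the first summand is an error coming from replacing $f_n$ by its value at the anchor $\mathbf{x}(t)$ and the second is the size of the rescaled copy of $f_0$ that is added back. Both errors are controlled by how small the pieces $\FFF_t(F)$ are, i.e.\ by $L_{n,k}$.

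I would bound the first summand by the Lipschitz property of $f_n$: since $x,\mathbf{x}(t)\in \FFF_t(F)$, we have $|x-\mathbf{x}(t)|\le |\FFF_t(F)|$, and by \eqref{*VI*b} this is at most $\big(\tfrac{M_n-m_n}{3L_{n,k}}\big)^{1/\aaa}$. Hence the first summand is at most $K_n\big(\tfrac{M_n-m_n}{3L_{n,k}}\big)^{1/\aaa}$. For the second summand I would use $f_0\in C^{\aaa}_{1/2}(F)$ together with the fact that $\FFF_t^{-1}$ scales distances by $1/\mathbf{q}(t)$, giving
\[
\mathbf{q}^{\aaa}(t)\,\big|f_0(\FFF_t^{-1}(x))-f_0(\FFF_t^{-1}(\mathbf{x}(t)))\big| \le \mathbf{q}^{\aaa}(t)\cdot \tfrac12\cdot \mathbf{q}(t)^{-\aaa}|x-\mathbf{x}(t)|^{\aaa} = \tfrac12 |x-\mathbf{x}(t)|^{\aaa},
\]
which is at most $\tfrac{M_n-m_n}{6L_{n,k}}$ by the same diameter bound. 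Note how the factor $\mathbf{q}^{\aaa}(t)$ in \eqref{*VII*b} was placed precisely so that the Hölder scaling of $f_0$ is absorbed cleanly.

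The only place where hypothesis $\alpha<1$ really matters is in the first summand: since $1/\aaa>1$, the term $\big(\tfrac{M_n-m_n}{3L_{n,k}}\big)^{1/\aaa}$ decays strictly faster than $1/L_{n,k}$, so even multiplied by the (fixed) Lipschitz constant $K_n$ it tends to $0$ as $L_{n,k}\to\infty$. Combining the two estimates gives
\[
|f_{n,k}(x)-f_n(x)| \le K_n\Big(\tfrac{M_n-m_n}{3L_{n,k}}\Big)^{1/\aaa} + \tfrac{M_n-m_n}{6L_{n,k}},
\]
uniformly in $t$ and in $x\in \FFF_t(F)$. Choosing $L_{n,k}$ large enough to make each term below $\tfrac{1}{2(n+k)}$ yields \eqref{*VII*aa}. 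There is no real obstacle here; the only point worth watching is that the uniformity in $t$ follows because all the bounds depend on $t$ only through $\mathbf{q}(t)$, and the $\mathbf{q}(t)$-dependence has already cancelled out before taking the supremum over $t$.
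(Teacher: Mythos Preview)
Your proof is correct and follows essentially the same route as the paper: split via the triangle inequality around $\mathbf{x}(t)$, bound the $f_0$-part using $f_0\in C^{\aaa}_{1/2}$ together with the scaling of $\FFF_t^{-1}$, and bound the $f_n$-part using the small diameter of $\FFF_t(F)$ from \eqref{*VI*b}. The only minor difference is that the paper estimates $|f_n(\mathbf{x}(t))-f_n(x)|$ by the $c_n$-H\"older-$\aaa$ property (obtaining the cleaner bound $(\tfrac12+c_n)\tfrac{M_n-m_n}{3L_{n,k}}$) rather than the Lipschitz-$K_n$ bound you use; note in particular that your aside about $\alpha<1$ ``really mattering'' here is not actually needed for this claim, since both your term $K_n\big(\tfrac{M_n-m_n}{3L_{n,k}}\big)^{1/\aaa}$ and the paper's bound tend to $0$ as $L_{n,k}\to\infty$ even when $\alpha=1$.
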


\begin{proof}[Proof of Claim \ref{*cl1}]
Take $x\in F_{n,k}^*$. Then  there exists $t$  such that $x\in \FFF_{t}(F)$.
To obtain  \eqref{*VII*aa} we have the following chain of estimates
$$|f_{n,k}(x)-f_{n}(x)|\leq |f_{n,k}(x)-f_{n,k}(\mathbf{x}(t))|+|f_{n,k}(\mathbf{x}(t))-f_{n}(\mathbf{x}(t))|+
|f_{n}(\mathbf{x}(t))-f_{n}(x)|$$
$$\leq \mathbf{q}^{\aaa}(t)| f_{0}(\FFF_{t}^{-1}(x))-f_0(\FFF_{t}^{-1}(\mathbf{x}(t)))|+0+c_{n}|\mathbf{x}(t)-x|^{\aaa}
$$ 
$$
\leq \mathbf{q}^{\aaa}(t)\frac{1}{2}\Big (  \frac{1}{\mathbf{q}(t)}|x-\mathbf{x}(t)| \Big )^{\aaa}
+c_{n}|\mathbf{x}(t)-x|^{\aaa}\leq \Big (\frac{1}{2}+c_{n}\Big ) |\mathbf{x}(t) -x|^{\aaa}
$$ 
(using \eqref{*VI*b} and choosing a sufficiently large $L_{n,k}$) 
\begin{align}\label{*cl1_vege}
\leq
\frac{M_{n}-m_n}{3L_{n,k}} \Big (\frac{1}{2}+c_{n}\Big )<\frac{1}{n+k}.
\end{align}
This proves Claim \ref{*cl1}.
\end{proof}

\begin{claim}\label{*cl2}
If $L_{n,k}$ is sufficiently large then
\begin{equation}\label{*VII*a}
|f_{n,k}(x)-f_{n,k}(y)|<\frac{1+c_{n}}{2}|x-y|^{\aaa}
\text{ for all }x,y\in F_{n,k}^*.
\end{equation}
\end{claim}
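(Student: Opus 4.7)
The plan is to split the estimate into two cases according to whether $x,y\in F_{n,k}^*$ lie in a common piece $\FFF_t(F)$ or in distinct pieces, and to choose $L_{n,k}$ sufficiently large so that both cases yield the required H\"older bound.

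When $x,y\in \FFF_t(F)$ for the same $t$, the definition \eqref{*VII*b} reduces the estimate directly to the H\"older property of $f_0$. Since $f_0=\tfrac{1}{2}f_0^{*}\in C_{1/2}^{\aaa}(F)$, the scaling factor $\mathbf{q}^{\aaa}(t)$ cancels with the factor $1/\mathbf{q}^{\aaa}(t)$ gained from applying the H\"older estimate to $\FFF_t^{-1}$, leaving
\[
|f_{n,k}(x)-f_{n,k}(y)|\leq \tfrac{1}{2}|x-y|^{\aaa},
\]
which is strictly less than $\tfrac{1+c_n}{2}|x-y|^{\aaa}$ for $x\ne y$, since $c_n>0$ (as $f_n$ is locally non-constant).

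The substantive case is $x\in \FFF_t(F)$, $y\in \FFF_{t'}(F)$ with $t\ne t'$. Here I would write the triangle inequality through $f_n$, so that the two outer terms are controlled by the perturbation bound \eqref{*cl1_vege} from the proof of Claim \ref{*cl1} (of order $(M_n-m_n)/L_{n,k}$) and the middle term by the H\"older constant $c_n$ of $f_n$. I then need a matching lower bound on $|x-y|^{\aaa}$. Using $\mathbf{x}(t),\mathbf{x}(t')$ as intermediate points in the triangle inequality and combining \eqref{*VI*a} with \eqref{*VI*b} gives
\[
|x-y|\geq \frac{M_n-m_n}{L_{n,k}K_n}-2\Bigl(\frac{M_n-m_n}{3L_{n,k}}\Bigr)^{1/\aaa}\geq \frac{M_n-m_n}{2L_{n,k}K_n},
\]
the second inequality holding for $L_{n,k}$ large, since $1/\aaa>1$ makes the diameter term asymptotically negligible compared with the separation.

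The main obstacle lies in reconciling these two estimates: the perturbation error scales as $1/L_{n,k}$, while the H\"older lower bound $|x-y|^{\aaa}$ only scales as $(1/L_{n,k})^{\aaa}$. Their ratio is $L_{n,k}^{\aaa-1}\to 0$ as $L_{n,k}\to\infty$ precisely because $\aaa<1$, which is what lets me absorb the perturbation into the slack $\tfrac{1-c_n}{2}|x-y|^{\aaa}$ between $c_n|x-y|^{\aaa}$ and $\tfrac{1+c_n}{2}|x-y|^{\aaa}$. Concretely, the required inequality takes the form $L_{n,k}^{1-\aaa}\geq A(n)$ with $A(n)$ independent of $L_{n,k}$, which is satisfied for $L_{n,k}$ large enough. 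The assumption $\aaa<1$ is therefore essential, consistent with the Lipschitz case being excluded from Theorem \ref{*thessup}.
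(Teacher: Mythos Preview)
Your argument is correct and in fact a little cleaner than the paper's. The same-piece case is handled identically (the bound $\tfrac12|x-y|^{\alpha}$ from $f_0\in C_{1/2}^{\alpha}$). For $x\in\Phi_t(F)$, $y\in\Phi_{t'}(F)$ with $t\ne t'$, the paper routes the triangle inequality through the centres $\mathbf{x}(t),\mathbf{x}(t')$: the outer terms are controlled by the same-piece bound and the middle term by $c_n|\mathbf{x}(t)-\mathbf{x}(t')|^{\alpha}$. This forces a conversion of $|\mathbf{x}(t)-\mathbf{x}(t')|$ back to $|x-y|$ via \eqref{*X*a}, which introduces a term linear in $|x-y|$ and leads to a further subcase split according to whether $|x-y|^{1-\alpha}\le\frac{1-c_n}{2}$ or not. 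Your route through $f_n(x),f_n(y)$ gives the middle term $c_n|x-y|^{\alpha}$ directly, leaving only the additive perturbation of order $(M_n-m_n)/L_{n,k}$ from \eqref{*cl1_vege}; the single condition $\bigl((M_n-m_n)/L_{n,k}\bigr)^{1-\alpha}\le C(n)$ then handles all distances at once. Both proofs rest on the same mechanism---$\alpha<1$ makes the perturbation $\sim L_{n,k}^{-1}$ negligible against $|x-y|^{\alpha}\gtrsim L_{n,k}^{-\alpha}$---but yours dispenses with the case distinction.
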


\begin{proof}[Proof of Claim \ref{*cl2}]
Suppose that $x,y\in F_{n,k}^*$.
If  there exists $t$  such that $x,y\in \FFF_{t}(F)$
then
\begin{equation}\label{*IX*a}
\begin{split}
|f_{n,k}(x)-f_{n,k}(y)|
&= \mathbf{q}^{\aaa}(t)|f_{0}(\FFF_{t}^{-1}(x))-
f_{0}(\FFF_{t}^{-1}(y))| \\
&\leq \mathbf{q}^{\aaa}(t)\frac{1}{2}\frac{1}{\mathbf{q}^{\aaa}(t)}|x-y|^{\aaa}\leq \frac{1}{2}
|x-y|^{\aaa}.
\end{split}
\end{equation}

Next suppose that  $x\in\FFF_{t}(F)$ and $y\in \FFF_{t'}(F)$ with $t\not=t'$.
 We separate two subcases. First we suppose that $x$ and $y$
are not too far away. We mean by this  that
\begin{equation}\label{*xynfar}
|x-y|^{1-\aaa}\leq \frac{1-c_{n}}{2}.
\end{equation}

We also need a lower estimate of the distance of $x$ and $y$. We capitalize
on \eqref{*VI*a} and \eqref{*VI*b}
\begin{equation}\label{*X*a}
|x-y| \geq |\mathbf{x}(t)-\mathbf{x}(t')|-|x-\mathbf{x}(t)|-|\mathbf{x}(t')-y|
\end{equation}
$$\geq |\mathbf{x}(t)-\mathbf{x}(t')| \Big ( 1-\Big (   \frac{M_{n}-m_n}{L_{n,k}} \Big )^{\frac{1}{\aaa}}|\mathbf{x}(t)-\mathbf{x}(t')|^{-1} \Big )$$
$$\geq |\mathbf{x}(t)-\mathbf{x}(t')| \Big ( 1-\Big (   \frac{M_{n}-m_n}{L_{n,k}} \Big )^{\frac{1}{\aaa}-1} K_{n} \Big )
\geq \frac{M_{n}-m_n}{L_{n,k}K_n} \Big ( 1-\Big (   \frac{M_{n}-m_n}{L_{n,k}} \Big )^{\frac{1}{\aaa}-1} K_{n} \Big )$$
(supposing that $L_{n,k}$ is sufficiently large)
$$\geq \frac{1}{2} \frac{M_{n}-m_n}{L_{n,k}K_n} .$$

For $x$ and $\mathbf{x}(t)$, and for $y$ and $\mathbf{x}(t')$
we use \eqref{*IX*a} to obtain 
$$|f_{n,k}(x)-f_{n,k}(y)|$$
$$\leq |f_{n,k}(x)-f_{n,k}(\mathbf{x}(t))|+|f_{n,k}(\mathbf{x}(t))-f_{n,k}(\mathbf{x}(t'))|+|f_{n,k}(\mathbf{x}(t'))-
f_{n,k}(y)|$$
$$\leq \frac{1}{2}|x-\mathbf{x}(t)|^{\aaa}+|f_{n}(\mathbf{x}(t))-f_{n}(\mathbf{x}(t'))|+\frac{1}{2}|y-\mathbf{x}(t')|^{\aaa}$$
\begin{equation}\label{*diffes}
\leq  \frac{M_{n}-m_n}{3L_{n,k}K_n}+c_{n}|\mathbf{x}(t)-\mathbf{x}(t')|^{\aaa}
\end{equation}
(using \eqref{*X*a})
\begin{equation}\label{*diffesss}
\leq \frac{2}{3}|x-y|+c_{n}|x-y|^{\aaa}\Big (  1- \Big (  \frac{M_{n}-m_n}{L_{n,k}} \Big )^{\frac{1}{\aaa}-1}K_n \Big )^{-\aaa}
\end{equation}
$$\leq |x-y|^{\aaa}\Big (
\frac{2}{3}|x-y|^{1-\aaa}+c_{n}\Big (  1- \Big (  \frac{M_{n}-m_n}{L_{n,k}} \Big )^{\frac{1}{\aaa}-1} K_n \Big )^{-\aaa}
   \Big )$$
 (using \eqref{*xynfar}) 
\begin{equation}\label{*diffess}
\leq |x-y|^{\aaa}\Big (
\frac{1-c_{n}}{3}  +c_{n}\Big (  1- \Big (  \frac{M_{n}-m_n}{L_{n,k}}  \Big )^{\frac{1}{\aaa}-1} K_n \Big  )^{-\aaa}
   \Big )
\end{equation}
(if $L_{n,k}$ is sufficiently large)
$$<\Big (  \frac{1+c_{n}}{2}\Big )|x-y|^{\aaa}.$$
 This took care of the case when \eqref{*xynfar}
holds.

Next we suppose that
\begin{equation}\label{*xynfarr}
|x-y|> \Big (\frac{1-c_{n}}{2}\Big )^{\frac{1}{1-\aaa}}.
\end{equation}
We argue until \eqref{*diffes} as before. 
At this point we can estimate the second term in \eqref{*diffes}
as we did it when we obtained \eqref{*diffesss}.
To estimate the first term 
 using \eqref{*xynfarr} we can choose  
 $L_{n,k}$
sufficiently large such that 
$$
 \frac{M_{n}-m_n}{3L_{n,k}K_n}<\frac{1-c_{n}}{3} \Big (\frac{1-c_{n}}{2}\Big )^{\frac{\aaa}{1-\aaa}}<\frac{1-c_{n}}{3}|x-y|^{\aaa}.
$$
Then
 we can directly jump from 
\eqref{*diffes} to \eqref{*diffess} and then finish the estimate as before.
This completes the proof of Claim \ref{*cl2}.
\end{proof} 

In Claim \ref{*cl2} we have proved that $f_{n,k}$ is $ \frac{1+c_{n}}{2}$-H\"older-$\aaa$ 
on $F_{n,k}^*$. Now, by using Theorem \ref{*Grunb} we extend the definition of $f_{n,k}$
onto $\R^{p}$  such that its extension, still denoted by $f_{n,k}$ is still
a $ \frac{1+c_{n}}{2}$-H\"older-$\aaa$ function.
Put 
\begin{equation}\label{*fnkcsdef}
f_{n,k}^{*}(x)=\min\Big \{ f_{n}(x)+\frac{1}{n+k},
\max\Big \{ f_{n,k}(x), f_{n}(x)-\frac{1}{n+k} \Big \} \Big \}.
\end{equation}

Since $f_n$ is a $ c_{n}$-H\"older-$\aaa$ function and 
$f_{n,k}$ is 
a $ \frac{1+c_{n}}{2}$-H\"older-$\aaa$ function
$f_{n,k}^{*}$ is also a $ \frac{1+c_{n}}{2}$-H\"older-$\aaa$ function. Moreover
 by Claim \ref{*cl1}  
$f_{n,k}^{*}=f_{n,k}$ on $F_{n,k}^*$.
Hence 
\begin{equation}\label{x(t)-ben ugyanaz}
f_{n,k}^{*}(\mathbf{x}(t))=f_{n,k}(\mathbf{x}(t))=f_{n}(\mathbf{x}(t))=\mathbf{p}(t).
\end{equation}
By Proposition \ref{*ko} and by  the choice of $f_0^*$ and $\rho_0$  for any 
$f\in B(f_{0}^{*},\rrr_{0})$
\begin{equation}\label{*XII*a}
\lll\{ r\in f(F):\dimh f^{-1}(r)>\DDD-\ddd_{0} \}\geq \kkk_{0}\lll(f(F)).
\end{equation}
By \eqref{*VII*b}  the graph of $f_{n,k}^{*}|_{\FFF_{t}(F)}$ is an affine copy of 
the graph of $f_{0}^{*}$. In other words, $f_{n,k}^{*}|_{\FFF_{t}(F)}$ is a rescaled  version of $f_{0}^{*}=2f_{0}$ with scaling ratio $\mathbf{q}(t)$ along domain directions and with scaling ratio $\frac12 \mathbf{q}^\alpha(t)$ along the range axis.
This affine transformation also gives a correspondence between $B(f^*_0,\rho_0 )$ and $\{f|_{\Phi_t(F)} : f\in B(f^*_{n,k},\frac12\mathbf{q}^\alpha(t)\rho_0) \}$.
Consequently, by \eqref{*XII*a} for any $f\in B(f_{n,k}^{*},\frac12\mathbf{q}^\alpha(t)\rrr_{0})$
\begin{equation}\label{*XII*n,k}
\lll\{ r\in f(\FFF_{t}(F)):\dimh f^{-1}(r)>\DDD-\ddd_{0} \}
\geq \kkk_{0}\lll(f(\FFF_{t}(F))).
\end{equation}

Set $\ell _{0}=\lll(f_{0}^{*}(F))>0.$
Then using 
\eqref{*VI*bb}
\begin{equation}\label{*XII*b}
\lll(f_{n,k}^{*}(\FFF_{t}(F)))=\frac{1}{2}\mathbf{q}^{\aaa}( t)\cdot \ell _{0}
\geq \frac{1}{2}\cdot \frac{M_{n}-m_n}{3L_{n,k}} \mathbf{q}_{\min}^{\aaa} \ell _{0}.
\end{equation}
Thus using \eqref{*VI*bb} and $\ell_0\le 1\cdot |F|^\alpha\le 1$,
$$f_{n,k}^{*} (\FFF_{t}(F))
\sse \Big [\mathbf{p}_{n,k}(t)-\frac{1}{2}\cdot \frac{M_{n}-m_n}{3L_{n,k}},
\mathbf{p}_{n,k}(t)+\frac{1}{2}\cdot \frac{M_{n}-m_n}{3L_{n,k}}\Big ].$$

We will select a sufficiently small 
\begin{equation}\label{*suprnk}
0<\rrr_{n,k}< \rrr_{0}\frac{1}{2}\cdot \frac{M_{n}-m_n}{3L_{n,k}}\mathbf{q}_{\min}^{\aaa}
 < \rrr_{0}\frac{1}{2}\cdot \mathbf{q}_t^\alpha
\end{equation}
(the last inequality holds  by \eqref{*VI*bb}). 

Suppose that $f\in B(f_{n,k}^{*},\rrr_{n,k})\sse B(f_{n,k}^{*},\rrr_{0}\mathbf{q}^{\aaa}(t)/2)$.
Then 
$$ f (\FFF_{t}(F))
\sse
\mathbf{I}_{n,k}(t)
:= \Big [\mathbf{p}_{n,k}(t)-\frac{M_{n}-m_n}{3L_{n,k}},
\mathbf{p}_{n,k}(t)+\frac{M_{n}-m_n}{3L_{n,k}}\Big ].$$ 
 By \eqref{*XII*n,k}  we also obtain
\begin{equation}\label{*XIII*a}
\lll\{ r\in f(F)\cap\mathbf{I}_{n,k}(t):\dimh f^{-1}(r)>\DDD-\ddd_{0} \}\geq \kkk_{0}\lll(f(\FFF_{t}(F)))
\end{equation}
$$\geq \kkk_{0}(\lll(f_{n,k}^{*}(\FFF_{t}(F)))-2\rrr_{n,k})$$
(by choosing a sufficiently small $\rrr_{n,k}$ and using \eqref{*XII*b})
$$\geq \frac{\kkk_{0}}{2} \lll(f_{n,k}^{*}(\FFF_{t}(F)))\geq \frac{\kkk_{0}}{2}\cdot \frac{1}{2}\cdot \frac{M_{n}-m_n}{3L_{n,k}} \mathbf{q}_{\min}^{\aaa} \ell _{0}.$$

For $t=1,...,L_{n,k}-1$ the intervals $\mathbf{I}_{n,k}(t)$ are disjoint and equally spaced.

Set $\cag_{k}=\bigcup_{n} B( f_{n,k}^{*} ,\rrr_{n,k})$.  Since $\{ f_{n}\}$ was dense in $C_{1}^{\aaa}(F)$ by \eqref{*fnkcsdef} it is clear that the  sets  $\cag_{k}$  are dense open 
in $C_{1}^{\aaa}(F)$ and hence $\cag=\bigcap_{k}\cag_{k}$ is dense $G_{\ddd}$.

Suppose that $f\in \cag$. 
Then for any $k=1,2,...$  there exists $n(k)$  such that  $f\in B( f_{n(k),k}^{*},\rrr_{n(k),k}).$

Put 
$$\mathbf{H}:=\{ r\in f(F):\dimh f^{-1}(r)<\DDD-\ddd_{0} \}.$$
Proceeding towards a contradiction suppose that
$\lll(\mathbf{H})>0.$

By \eqref{x(t)-ben ugyanaz} we have 
$$
f(F)\supset \Big[m_{ n(k)}+\rrr_{n(k),k},M_{ n(k)}-\frac{M_{n(k)}-m_{n(k)}}{L_{n(k),k}}-\rrr_{n(k),k}\Big], 
$$
and \eqref{*suprnk} implies
$$\rrr_{n(k),k}< \frac{1}{2}\cdot \frac{M_{n(k)}-m_{n(k)}}{L_{n(k),k}}.$$
Then $\mathbf{p}_{n(k),k}(t)\in f(F)$ for $t=1,...,L_{n(k),k}-2$.

By Lebesgue's Density Theorem for every $0<\ggg<1$ for large $k$
 there exists $t\in \{ 2,...,L_{n(k),k}-3 \}$  such that 
 letting $\mathbf{I}^{*}=[\mathbf{p}_{n(k),k}(t-1),\mathbf{p}_{n(k),k}(t+1)]$ we have
\begin{equation}\label{H_suru}
\lll(\mathbf{I}^{*}\cap \mathbf{H})\geq \ggg \lll(\mathbf{I}^{*}). 
\end{equation}
 
 On the other hand, $\mathbf{I}_{n(k),k}(t)\sse \mathbf{I}^{*}$.
 Set $$\mathbf{H}^{*}:=\{ r\in f(F)\cap \mathbf{I}^{*}:\dimh f^{-1}(r)\geq \DDD-\ddd_{0} \}.$$
 Using this notation from \eqref{*XIII*a} it follows that
\begin{equation}\label{H^*_suru}
\lll(\mathbf{H}^{*}\cap \mathbf{I}^{*})=\lll(\mathbf{H}^{*})\geq \frac{\kkk_{0}}{12} \frac{M_{n(k)}-m_{n(k)}}{L_{n(k),k}} \mathbf{q}_{\min}^{\aaa}
 \ell _{0}  =  \frac{\kkk_{0}}{12} \mathbf{q}_{\min}^{\aaa}\ell _{0}\frac{\lll(\mathbf{I}^{*})}{{2}}.
\end{equation} 
Since $\mathbf{H}\cap \mathbf{H}^{*}=\ess$, adding \eqref{H_suru} to \eqref{H^*_suru} we obtain
\begin{align*}
\lambda(\mathbf{I}^*) &\ge \gamma\lambda(\mathbf{I^*})+\frac{\kkk_{0}}{12} \mathbf{q}_{\min}^{\aaa}\ell _{0}\frac{\lll(\mathbf{I}^{*})}{2} \\
1 &\ge \gamma+\frac{\kkk_{0}}{12} \mathbf{q}_{\min}^{\aaa}\ell _{0}\frac{1}{2} \\
1-\gamma &\ge \frac{\kkk_{0}}{12} \mathbf{q}_{\min}^{\aaa}\ell _{0}\frac{1}{2}.
\end{align*}
This yields a contradiction, as $\gamma$ can be chosen arbitrarily close to $1$.
\end{proof}




\bibliographystyle{amsplain} 
\bibliography{sier}

\end{document}